\documentclass[12pt]{article}

\usepackage{latexsym,amsmath,amscd,amssymb,framed,graphics,mathrsfs}
\usepackage{enumerate}

\usepackage{graphicx}

\usepackage[colorlinks]{hyperref}
\usepackage{url}
\usepackage{colordvi}
\usepackage{color}
\usepackage{bbm}
\usepackage{cancel}

\usepackage{soul}
\setstcolor{blue}

\usepackage[all]{xy}


\makeatletter

\@addtoreset{figure}{section}
\def\thefigure{\thesection.\@arabic\c@figure}
\def\fps@figure{h, t}
\@addtoreset{table}{bsection}
\def\thetable{\thesection.\@arabic\c@table}
\def\fps@table{h, t}
\@addtoreset{equation}{section}

\makeatother

\textwidth 6.7 truein
\oddsidemargin 0 truein
\evensidemargin 0 truein
\topmargin -.6 truein
\textheight 9.3 in



\newenvironment{proof}{\paragraph{Proof:}}{\hfill$\square$}



\newtheorem{theorem}{Theorem}[section]

\newtheorem{lemma}[theorem]{Lemma}
\newtheorem{remark}[theorem]{Remark}
\newtheorem{proposition}[theorem]{Proposition}



\begin{document}

\title{Towards a geometric variational discretization of compressible fluids: the rotating shallow water equations}
\author{Werner Bauer$^{1,2}$ and Fran\c{c}ois Gay-Balmaz$^2$}
\addtocounter{footnote}{1} 
\footnotetext{Imperial College London, Department of Mathematics, London, UK.
\texttt{werner.bauer@lmd.ens.fr}}
\addtocounter{footnote}{1} 
\footnotetext{CNRS and \'Ecole Normale Sup\'erieure, Laboratoire de M\'et\'eorologie Dynamique, Paris, France. 
\texttt{gaybalma@lmd.ens.fr}}

\date{}
\maketitle

\begin{abstract} 
This paper presents a geometric variational discretization of compressible fluid dynamics.
The numerical scheme is obtained by discretizing, in a structure preserving way, the Lie group 
formulation of fluid dynamics on diffeomorphism groups and the associated variational principles. 
Our framework applies to irregular mesh discretizations in 2D and 3D. It systematically extends 
work previously made for incompressible fluids to the compressible case. We consider 
in detail the numerical scheme on 2D irregular simplicial meshes and evaluate the scheme numerically for the rotating 
shallow water equations. In particular, we investigate whether the scheme conserves stationary solutions, represents well 
the nonlinear dynamics, and approximates well the frequency relations of the continuous equations, while preserving conservation 
laws such as mass and total energy.
\end{abstract}

\section{Introduction}\label{intro} 
 
This paper develops a geometric variational discretization for compressible fluid dynamics.
Geometric integrators form a particular class of numerical schemes which aim to preserve the 
intrinsic geometric structures of the equations they discretize. As a consequence, such schemes 
are well-known to correctly reproduce the conservation laws and the global behavior of the 
underlying dynamical system, \cite{HaLuWa2002}.

One efficient way to produce geometric integrators is to exploit the variational formulation of 
the continuous equations and to mimic this formulation at the spatial and/or temporal discrete 
level. For instance, in classical mechanics, a time discretization of the Lagrangian variational 
formulation allows for the derivation of numerical schemes, called variational integrators, that 
are symplectic, exhibit good energy behavior, and inherit a discrete version of Noether's theorem 
which guarantees the exact preservation of momenta arising from symmetries, see \cite{MaWe2001}. 
An extension of this approach to the context of certain partial differential equations may be made 
through an appeal to their spacetime variational formulation resulting in multisymplectic schemes, 
\cite{MaPaSh1998}, \cite{LeMaOrWe2003}, see, e.g.,  \cite{DeGBRa2016}, \cite{DeGBKoRa2016}, \cite{GBPu2016} 
for recent developments in variational multisymplectic integrators.

The development of geometric variational integrators for the partial differential equations of incompressible 
fluid dynamics has been initiated in \cite{PaMuToKaMaDe2011} for the Euler equations of a perfect fluid. 
This approach exploits the geometric formulation due to \cite{Arnold1966} which interprets the motion of 
the ideal fluid as a geodesic curve on the group of volume preserving diffeomorphisms of the fluid domain. 
As a consequence of this interpretation, the fluid equations arise 
in the Lagrangian description from the Hamilton variational principle on the group of volume preserving 
diffeomorphisms, for the Lagrangian given by the kinetic energy of the fluid. By making use of the relabelling 
symmetry of the fluid, this principle naturally induces a variational principle in the Eulerian description on 
the Lie algebra of this group, namely, the space of divergence free vector fields. In \cite{PaMuToKaMaDe2011} this 
variational geometric formulation is implemented on a finite dimensional Lie group discretization of the group 
of volume preserving diffeomorphisms. A main feature of the discrete level is the occurrence of nonholonomic 
constraints which require the use of the Lagrange-d'Alembert principle, a variant of Hamilton's principle 
applicable to nonholonomic systems. The spatially discretized Euler equations emerge from an application 
of this principle on the finite dimensional Lie group approximation. This approach was 
extended in \cite{GaMuPaMaDe2011} to various equations of incompressible fluid dynamics with advected quantities, 
such as MHD and liquid crystals. The development of this geometric method for rotating and stratified fluids for 
atmospheric and oceanic dynamics was given in \cite{DeGaGBZe2014}.
Improvements of this variational method in efficiency, generality, and controllability were achieved 
in \cite{LiMaHoToDe2015}. In \cite{BaGB2016}, the geometric variational 
discretization was extended to anelastic and pseudo-incompressible fluids on 2D irregular simplicial meshes.

In the present paper, we develop this geometric variational discretization towards the treatment of compressible 
fluid dynamics. This extension is based on a suitable Lie group approximation of the group of  (not necessarily 
volume preserving) diffeomorphisms of the fluid domain, accompanied with an appropriate right invariant nonholonomic 
constraint obtained by requiring that Lie algebra elements are approximations of continuous vector fields. 
The spatial discretization of the compressible fluid equations is obtained by an application of the Lagrange-d'Alembert 
principle on the Lie group of discrete diffeomorphisms, for a semidiscrete Lagrangian which is assumed to have the same 
relabelling symmetries as the continuous Lagrangian. From these symmetries, one deduces the Eulerian version of this 
principle and gets the discrete equations by computing the critical curves. This geometric setting is independent of 
the choice of the mesh discretization of the fluid domain.

\color{black}
As we will see later in the paper, extending the discrete diffeomorphism approach from the incompressible to the compressible case requires nontrivial steps.
Simply removing the incompressibility condition in the definition of the group of discrete volume preserving diffeomorphisms defined in \cite{PaMuToKaMaDe2011} is not enough, since this results in a group that is too large as it contains discrete diffeomorphisms that have no physical significance. This difficulty is overcome by the introduction of a nonholonomic constraint that imposes the Lie algebra elements to correspond to a discrete vector field. Note that this extension to the compressible case was not needed for the variational discretization of anelastic and pseudo-incompressible fluids in \cite{BaGB2016}. Indeed, as shown in \cite{BaGB2016}, in the continuous case these equations can be derived from the Hamilton principle written on a group of diffeomorphisms that preserve a modified volume form. Hence, as opposed to the present case, the variational discretization can still be done via a slight modification of the group of volume preserving discrete diffeomorphisms introduced in \cite{PaMuToKaMaDe2011}.
 
\color{black}
\bigskip

\paragraph{Plan of the paper.} 
We end this introduction by giving a quick overview of the geometric variational discretization of incompressible fluids. 
In Sect.~\ref{Section_2}, we describe the geometric setting for the discretization of compressible fluids. In particular, 
for a given mesh on the fluid domain, we introduce the associated group of discrete diffeomorphisms, its Lie algebra, the 
nonholonomic constraint, and we identify the appropriate dual spaces and projections that allow us to apply the 
Lagrange-d'Alembert principle. In Sect.~\ref{section_3}, we derive the discrete compressible fluid equations on 2D irregular 
simplicial grids. In particular, we identify the expression of the discrete Lie derivative of one-form densities on such meshes.
In Sect.~\ref{sec_explizit_scheme}, we present a structure preserving time discretization (of Crank-Nicolson type) 
of the scheme and suggest a solving procedure. Moreover, we express the scheme explicitly in terms of velocity and fluid depth. 
In Sect.~\ref{sec_numerical_analysis}, we perform numerical test and evaluate if the scheme 
is capable of conserving stationary solutions and whether it presents well the nonlinear dynamics, frequency 
relations, and the conservation laws such as mass and total energy.

We conclude this introduction by quickly reviewing the approach of  \cite{PaMuToKaMaDe2011}  based on 
a discretization of the group of volume preserving diffeomorphisms.

\bigskip

\paragraph{Variational discretization of incompressible fluids.} 
Variational discretizations in mechanics always start with a proper understanding of the continuous 
Lagrangian description of the mechanical system, namely, the identification of its configuration manifold 
$Q$ and of its Lagrangian, defined on the tangent bundle of $Q$, to which the 
Euler-Lagrange equations of motion are associated. In the case of the motion of an ideal fluid 
on a manifold $M$, following \cite{Arnold1966}, the configuration space is the infinite dimensional 
Lie group $ \operatorname{Diff}_{\rm vol}(M)$ of volume preserving diffeomorphisms of $M$.
The Lagrangian is defined on its tangent bundle and is given by the $L ^2 $ kinetic energy of 
the Lagrangian velocity of the fluid. The motion of the fluid in the material description thus 
formally corresponds to $L ^2 $ geodesics on $ \operatorname{Diff}_{\rm vol}(M)$. An essential 
property of the configuration manifold $ \operatorname{Diff}_{\rm vol}(M)$ is its group structure, 
which allows one to understand the Eulerian description of fluid dynamics as a ``symmetry reduced" version 
of the material description, associated to the relabelling symmetry of the Lagrangian. 
One of the main features of the approach undertaken in \cite{PaMuToKaMaDe2011} is that it allows one to preserve 
this symmetry at the spatially discretized level.

Let us assume that the fluid domain $M$ is 
discretized as a mesh $\mathbb{M}$ of $N$ cells denoted $C_i$, $i=1,...,N$. The mesh is not assumed to be 
regular. We define the $N \times N$ diagonal matrix $ \Omega $ with diagonal elements 
$ \Omega _{ii}= \operatorname{Vol}( C_i )$, the volume of cell $C_i $. It is shown in \cite{PaMuToKaMaDe2011} 
that an appropriate discrete version of the group $ \operatorname{Diff}_{\rm vol}(M)$ is the matrix group
\begin{equation}\label{DD}
\mathsf{D}_{\rm vol}(\mathbb{M})=\left\{q\in \operatorname{GL}(N)^+\mid q\cdot\mathbf{1}=\mathbf{1}\quad\text{and}\quad q^\mathsf{T}\Omega q=\Omega\right\},
\end{equation}
where $\operatorname{GL}(N)^+$ is the group of invertible $N \times N$ matrices with positive determinant and 
$\mathbf{1}$ denotes the column $(1,...,1)^\mathsf{T}$ so that the first condition reads $\sum_{j=1}^Nq_{ij}=1$, for all $i=1,...,N$. 
The main idea behind this definition is the following, see \cite{PaMuToKaMaDe2011}. Consider the linear action 
of the group $\operatorname{Diff}_{\rm vol}(M)$ on the space $\mathcal{F}(M)$ of functions on $M$ given by composition, i.e.,
\begin{equation}\label{action_diffeo}
f\in\mathcal{F}(M)\mapsto f\circ\varphi^{-1}\in\mathcal{F}(M),\quad \varphi\in\operatorname{Diff}_{\rm vol}(M).
\end{equation}
This linear map has the following two properties: it preserves the $L^2$ inner product of functions and preserves 
constant functions. The discrete diffeomorphism group \eqref{DD} is obtained by imposing that its linear action on 
discrete functions satisfies these two properties. If one chooses a discrete function to be represented by a vector 
$F\in\mathbb{R}^N$, whose value $F_i$ on cell $i$ is regarded as the cell average of the continuous function on 
cell $i$, then \textcolor{black}{a discrete approximation of the $L^2$ inner product of functions} is given by
\begin{equation}\label{pairing_0} 
\left\langle F, G\right\rangle_0=F^\mathsf{T}\Omega G=\sum_{i=1}^NF_i\Omega_{ii} G_i.
\end{equation} 
With this choice, we get the conditions $q\cdot\mathbf{1}=\mathbf{1}$ and $q^\mathsf{T}\Omega q=\Omega$ in \eqref{DD}. 

The spatial discretization of the incompressible Euler equations is then obtained by applying a variational 
principle on the discrete diffeomorphism group $\mathsf{D}_{\rm vol}(\mathbb{M})$ for an appropriate spatially 
discretized right invariant Lagrangian $L=L(q, \dot q)$ and with respect to appropriate nonholonomic constraints. 
This approach directly follows from a variational discretization of the geometric description of the Euler equations 
given in \cite{Arnold1966} that we briefly mentioned above.
Being associated to a right invariant Lagrangian, the variational principle can be equivalently rewritten on the 
Lie algebra of the matrix group $\mathsf{D} _{\rm vol}(\mathbb{M})$, given by the space of $\Omega$-antisymmetric, 
row-null $ N \times N$ matrices
\[
\mathfrak{d}_{\rm vol}(\mathbb{M})=\{A\in\mathfrak{gl}(N)\mid A\cdot \mathbf{1}=0\quad\text{and}\quad A^\mathsf{T}\Omega+\Omega A=0\}.
\]
As shown in \cite{PaMuToKaMaDe2011}, a matrix $A \in \mathfrak{d}_{\rm vol}(\mathbb{M})$ represents the discretization 
of a divergence free vector field $ \mathbf{u} $ through the identification of a 
matrix element with a weighted flux, i.e., 
\[
A _{ij} \simeq - \frac{1}{2 \Omega _{ii}}\int_{ D_{ij} }( \mathbf{u} \cdot \textcolor{black}{\mathbf{n}_{ij}}) \,{\rm d}S,
\]
where $D _{ij} $ is the hyperface common to cells $C_i$ and $C_j$ and $ \mathbf{n} _{ij} $ is the normal vector field 
on $ D _{ij} $ pointing from $C_i$ to $C_j$. This representation shows that only matrix elements associated to neighboring 
cells can be non-zero, which is understood as a nonholonomic constraint $ \mathcal{S} \subset \mathfrak{d}_{vol}(\mathbb{M})$ 
imposed on the Lie algebra elements and appropriately used in the variational principle.

\medskip

For later use, we recall that in the context of the discrete diffeomorphism group approach, a discrete zero-form 
(i.e., a discrete function) on $ \mathbb{M}  $ is a vector $F \in \mathbb{R}  ^N $. The components of such a vector 
are regarded as the cell averages of a continuous scalar field $f \in C^0(M)$, i.e., 
$F_i= \frac{1}{ \Omega _{ii}}\int_{ C_i} f ( \mathbf{x} )\, {\rm d}\mathbf{x} $. The space of discrete zero-forms is denoted 
$ \Omega _d ^0 ( \mathbb{M}  )$. 
A discrete one-form on $ \mathbb{M}  $ is a skew-symmetric matrix $ K \in \mathfrak{so}(N)$. The space of discrete 
one-form is denoted $ \Omega _d ^1 ( \mathbb{M}  )$.

\medskip

The discrete diffeomorphism group approach was developed towards applications to rotating stratified fluids 
in \cite{DeGaGBZe2014}. In \cite{BaGB2016} appropriate discrete diffeomorphism groups were defined to develop 
variational discretization of the equations of anelastic and pseudo-incompressible fluids.

\section{Variational Lie group discretization of compressible fluids}\label{Section_2} 

In this section, we shall appropriately extend the structure preserving spatial discretization of \cite{PaMuToKaMaDe2011} 
to the compressible case. As we shall see, the treatment of compressible fluids requires the inclusion of an additional 
nonholonomic constraint.

\bigskip

\paragraph{Group of discrete diffeomorphisms.} Given a mesh $ \mathbb{M}  $ on the fluid domain $M$, an evident candidate 
for a discretization of the group of all (i.e., not necessarily volume preserving) diffeomorphisms is obtained by removing 
the volume preserving condition $q^\mathsf{T} \Omega q= \Omega $ in \eqref{DD}, thereby obtaining the matrix Lie group 
\begin{equation}\label{DD_all}
\mathsf{D}(\mathbb{M})=\left\{q\in \operatorname{GL}(N)^+\mid q\cdot\mathbf{1}=\mathbf{1}\right\}
\end{equation}
of dimension $ N ^2 -N$.
Its Lie algebra is the space of row-null $N \times N$ matrices
\begin{equation}\label{Lie_algebra} 
\mathfrak{d}(\mathbb{M})=\{A\in\mathfrak{gl}(N)\mid A\cdot \mathbf{1}=0\}.
\end{equation} 
The following lemma characterizes the dual space to $\mathfrak{d}(\mathbb{M})$ relative to the duality pairing on 
$ \mathfrak{gl}(N)$ given by
\begin{equation}\label{duality_pairing}
\left\langle L, A \right\rangle = \operatorname{Tr}(L^\mathsf{T} \Omega A).
\end{equation}

\begin{lemma}\label{lemma_1}  The dual space to $\mathfrak{d}(\mathbb{M})$ with respect to the pairing \eqref{duality_pairing} 
can be identified with the space of $N \times N$ matrices with zero diagonal:
\begin{equation}\label{dual_space} 
\mathfrak{d}(\mathbb{M}) ^\ast = \{ L  \in \mathfrak{gl}(N)\mid L_{ii}=0,\;\; \text{for all $i$} \}.
\end{equation}
In particular, given $L \in \mathfrak{gl}(N)$, we have $ \left\langle L,A \right\rangle =0$, for all $A \in \mathfrak{d}( \mathbb{M}  )$ 
if and only if $\mathbf{Q} (L)=0$, for the projector
\[
\mathbf{Q} : \mathfrak{gl}(N) \rightarrow \mathfrak{d}(\mathbb{M}) ^\ast , \quad \mathbf{Q} (L):= L- \widehat{L},
\]
with $\widehat{L}_{ij}:= L_{ii}$. 
\end{lemma}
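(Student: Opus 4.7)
My plan is to unpack the pairing $\langle L,A\rangle = \operatorname{Tr}(L^{\mathsf T}\Omega A)$ row by row and identify the annihilator of $\mathfrak{d}(\mathbb{M})$ inside $\mathfrak{gl}(N)$, then exhibit $\mathbf{Q}$ as the projection onto a complement of that annihilator.

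First, I would compute the pairing in coordinates. Using that $\Omega$ is diagonal, one gets
\[
\langle L,A\rangle = \operatorname{Tr}(L^{\mathsf T}\Omega A) = \sum_{i,j}\Omega_{jj}\,L_{ji}A_{ji} = \sum_{j=1}^N \Omega_{jj}\sum_{i=1}^N L_{ji}A_{ji},
\]
so the pairing decouples into a weighted sum of row-wise inner products between $L$ and $A$. The constraint $A\cdot\mathbf{1}=0$ says precisely that each row $A_{j\cdot}$ lies in the hyperplane $H=\{v\in\mathbb{R}^N\mid \sum_i v_i=0\}$, and the rows can be chosen independently.

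Next, since the rows of $A$ range independently over $H$ and the weights $\Omega_{jj}$ are strictly positive, $\langle L,A\rangle$ vanishes for every $A\in\mathfrak{d}(\mathbb{M})$ if and only if, for each row index $j$, the vector $L_{j\cdot}$ annihilates every $v\in H$. The annihilator of $H$ in $\mathbb{R}^N$ (under the standard pairing) is the one-dimensional span of $(1,\dots,1)$, so this is equivalent to each row of $L$ being constant, i.e.\ $L_{ij}=L_{ii}$ for all $i,j$. This is exactly the condition $L=\widehat{L}$, or equivalently $\mathbf{Q}(L)=0$, which proves the second assertion of the lemma.

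Finally, I would verify that $\mathbf{Q}$ is a projector and identify its image with \eqref{dual_space}. A direct check shows $\mathbf{Q}(L)_{ii}=L_{ii}-L_{ii}=0$, so $\operatorname{Im}\mathbf{Q}$ is contained in the space of zero-diagonal matrices; conversely any such matrix is fixed by $\mathbf{Q}$, giving idempotency $\mathbf{Q}^2=\mathbf{Q}$ and surjectivity onto that space. Writing $\mathfrak{gl}(N)=\operatorname{Im}\mathbf{Q}\oplus\ker\mathbf{Q}$ and observing that $\ker\mathbf{Q}$ is exactly the annihilator of $\mathfrak{d}(\mathbb{M})$ identified in the previous step, the induced pairing between zero-diagonal matrices and $\mathfrak{d}(\mathbb{M})$ is nondegenerate; since both spaces have dimension $N^2-N$, this yields the identification $\mathfrak{d}(\mathbb{M})^\ast\cong\{L\in\mathfrak{gl}(N)\mid L_{ii}=0\}$.

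There is no real obstacle here; the only subtlety is to choose the natural complement of the annihilator so that the projector $\mathbf{Q}$ has the clean description $L\mapsto L-\widehat L$, and to keep track of the weights $\Omega_{jj}>0$ so that the row-wise decoupling genuinely reduces the problem to elementary linear algebra on $\mathbb{R}^N$.
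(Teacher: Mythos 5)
Your proof is correct and follows essentially the same route as the paper: you identify the annihilator of $\mathfrak{d}(\mathbb{M})$ as the constant-row matrices $L=\widehat L$ and then realize the quotient $\mathfrak{gl}(N)/\mathfrak{d}(\mathbb{M})^\circ$ concretely as the zero-diagonal complement $\operatorname{Im}\mathbf{Q}$. The only difference is that you spell out the row-wise decoupling of $\operatorname{Tr}(L^{\mathsf T}\Omega A)$ and the idempotency of $\mathbf{Q}$, which the paper leaves implicit.
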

\begin{proof} The annihilator of $\mathfrak{d}(\mathbb{M})$ in $ \mathfrak{gl}(N)$, relative to the pairing \eqref{duality_pairing} 
is $\mathfrak{d}(\mathbb{M})^ \circ=\{L \in  \mathfrak{gl}(N)\mid L_{ij}= k_i,\;\; \text{for all $i,j$}\}$. The dual space is 
therefore identified with the quotient space $\mathfrak{gl}(N)/\mathfrak{d}(\mathbb{M})^ \circ$, which is clearly isomorphic 
to the space \eqref{dual_space}. 
\end{proof}
\medskip

From the preceding Lemma, the coadjoint operator 
$ \operatorname{ad}^*_A: \mathfrak{d} ( \mathbb{M}  ) ^\ast  \rightarrow \mathfrak{d} ( \mathbb{M}  ) ^\ast $ defined by 
$ \left\langle \operatorname{ad}^*_AL, B \right\rangle := \left\langle L, [A,B] \right\rangle $, for all 
$A,B \in \mathfrak{d}( \mathbb{M}  )$ and $L \in \mathfrak{d} ( \mathbb{M}  ) ^\ast $ is given by
\begin{equation}\label{ad_star} 
\operatorname{ad}^*_AL= \mathbf{Q} \left(  \Omega ^{-1} [A^\mathsf{T},\Omega L] \right),
\end{equation} 
where $[\,,]$ is the commutator of matrices, i.e., $[A,B]= AB- BA$. 

As we shall see below, an element in $\mathfrak{d}(\mathbb{M}) ^\ast$ does not represent necessarily a discrete momentum, 
since $A$ does not necessarily represent a discrete velocity.

\medskip

\textcolor{black}{We shall denote by $\{ \mathbb{M}  _h\}_{h>0}$ a family of meshes on the fluid domain $M$, indexed by $h=\max\{ h_{C_i} | C_i\in\mathbb{M}_h\}$, where $h_{C_i}$ is the diameter of cell $C_i$.}
Exactly as in the incompressible case considered in \cite{PaMuToKaMaDe2011}, given a family $\{ \mathbb{M}  _h\}_{h>0}$ of meshes on $M$, we say that a family $\{q_h\}_{h>0}$ of matrices $q_h \in \mathsf{D}(\mathbb{M}_h)$ approximates a 
diffeomorphism $ \varphi \in\operatorname{Diff}(M)$ if
\[
\|S _{ \mathbb{M}  _h} \left( q_h P_{ \mathbb{M}  _h} (f) \right)  - f \circ \varphi ^{-1} \|_{\textcolor{black}{L^\infty(M)}}\rightarrow 0,\quad \text{for all $f \in C^0( M)$}, 
\]
as $h \rightarrow 0$, where
\[
\left( P_{ \mathbb{M}  _h} (f)\right) _i:= \frac{1}{\Omega_{ii}}\int_{C _i }f( \mathbf{x} )\,{\rm d} \mathbf{x}  
\quad\text{and}\quad \left( S_{ \mathbb{M}  _h}(f _h ) \right) ( \mathbf{x} ):= (f_h)_i, \quad \text{if $ \mathbf{x}  \in C_i$},
\]
see Def. 1 of \cite{PaMuToKaMaDe2011}.
%


Consider a time dependent diffeomorphism $ \varphi (t) \in \operatorname{Diff}(M)$, fix a function $f_0$ on $M$ and 
define the time dependent function $f(t):= f_0 \circ  \varphi (t) ^{-1} $. The time derivative of $f$ is 
$\dot f(t)=- \mathbf{d} f(t) \cdot \mathbf{u}(t)$, the derivative of $f(t)$ in the direction $ \mathbf{u} (t)$. 
Suppose that a family $q_h(t)$ of discrete flows approximates $ \varphi (t) \in \operatorname{Diff}(M)$. 
From the definition of the discrete diffeomorphism group, the discrete version of $f(t) = f_0 \circ  \varphi (t)^{-1} $ 
is given by $ F_h(t) = q_h(t) F_h^0 $, where $F_h ^0 $ is a discrete function on $ \mathbb{M}  _h $. 
The time derivative of $F_h$ reads $\dot F_h(t)= A_h(t)F_h(t)$, where $ A_h(t):= \dot q_h(t) q_h(t) ^{-1} $.
Left multiplication by the matrix $A_h(t)$ thus corresponds to (minus) the discrete derivative along the discrete vector field $A_h(t)$.
This motivates the following definition, see Def. 3. of \cite{PaMuToKaMaDe2011}. Given a family $\{ \mathbb{M}  _h\}_{h>0}$ of meshes on the fluid domain, we say that a family $\{A_h\}_{h>0}$ of matrices $A_h \in \mathfrak{d}(\mathbb{M})$ approximates a vector field $ \mathbf{u} $ on $M$ if 
\begin{equation}\label{def_approx_velocity}
\|S _{ \mathbb{M}  _h} \left( A_h P_{ \mathbb{M}  _h} (f) \right)- (- \mathbf{d} f \cdot \mathbf{u})\|_{\textcolor{black}{L^\infty(M)}}\rightarrow 0 ,\quad 
\text{for all $f \in C^\infty( M)$}.
\end{equation}

The element $A _{ij} $ of the matrix $A(t)=\dot q(t) q(t) ^{-1} $ describes the infinitesimal exchange of fluid 
particles between cells $C_i$ and $C_j$. We thus assume the same nonholonomic constraint as in the incompressible 
case, namely that $A_{ij}$ is non-zero only if cells $C_i$ and $C_j$ share a common boundary. This leads to the constraint
\begin{equation}\label{constraint_1}
\mathcal{S} = \left\{ A \in \mathfrak{d} ( \mathbb{M}  )\mid A _{ij} =0, \;\; \text{for all $j \notin N(i)$} \right \}, 
\end{equation}
where $N(i)$ denotes the set of all indices (including $i$) of cells sharing a hyperface with cell $C_i$.

\color{black}

We shall now use \eqref{def_approx_velocity} to show explicitly how the elements of the matrix $ A \in \mathcal{S} $ approximate the continuous vector field $\mathbf{u}$ as $h\rightarrow 0$.
To do this, we shall assume some standard conditions on the family of meshes $\{\mathbb{M}_h\}_{h>0}$, see, e.g., \cite{ErGu2004}.
Recall that the family $\{\mathbb{M}_h\}_{h >0}$ is \textit{shape-regular} if there exists a constant $\sigma$ independent of $h$ such that
\begin{equation}\label{S_R}
\max_{C_i\in\mathbb{M}_h} \frac{h_{C_i}}{\rho_{C_i}}\leq \sigma,\quad\text{for every $h$},
\end{equation}
where $\rho_{C_i}$ is the diameter of the largest ball that can be inscribed in $C_i$. The family $\{\mathbb{M}_h\}_{h >0}$ is \textit{quasi-uniform} if it is shape-regular and there exists a constant $\gamma$ independent of $h$ such that
\begin{equation}\label{Q_U}
\max_{C_i\in\mathbb{M}_h} \frac{h}{h_{C_i}}\leq \gamma,\quad\text{for every $h$}.
\end{equation}
We shall also consider the following condition on the family of meshes 
\begin{equation}\label{additional_hypothesis}
\max_{C_i,C_j\in\mathbb{M}_h,\;i\in N(j)}\left|\frac{\mathbf{x}_i+\mathbf{x}_j}{2}- \mathbf{x}_{ij} \right|\leq \lambda h^{1+\alpha},
\end{equation}
for some $\alpha\geq 1$, where $\lambda$ is independent of $h$, $\mathbf{x}_k$ is the barycenter of cell $C_k$, and $\mathbf{x}_{ij}$ is the barycenter of hyperface $D_{ij}$.
In two dimensions, \eqref{additional_hypothesis} is equivalent to the following condition: Every pair of adjacent triangles $C_i\cup C_j$ forms an $O(h^{1+\alpha})$ approximate parallelogram.
That is, the lengths of any two opposite edges of $C_i\cup C_j$ differ by $O(h^{1+\alpha})$. This assumption is sometimes used in the finite element literature; see, for instance \cite{BaXu2003}, \cite{HuXu2008}, \cite{Ca2015}.

\begin{lemma}\label{lemma_matixA} Consider a family $\{\mathbb{M}_h\}_{h >0}$ of meshes on $M$. Assume that this family is quasi-uniform and satisfies \eqref{additional_hypothesis}. Given $\mathbf{u}\in W^{2,\infty}(M)$, we define for each $h>0$ the matrix $A_h^\mathbf{u}\in\mathcal{S}$ by
\begin{equation}\label{Aii}
(A^\mathbf{u}_h)_{ii}= \frac{1}{2\Omega_{ii}}\int_{C_i}\operatorname{div}\mathbf{u} \,{\rm d}\mathbf{x},\quad\text{for every $i=1,...,N_h$}
\end{equation}
and
\begin{equation}\label{Aij}
(A_h^\mathbf{u})_{ij}=-\frac{1}{2\Omega_{ii}}\int_{D_{ij}}(\mathbf{u}\cdot\mathbf{n}_{ij} ){\rm d}S,\quad\text{for every $i,j=1,...,N_h$, $j\in N(i)$},
\end{equation}
where $N_h$ is the number of cells in $\mathbb{M}_h$ and $\mathbf{n}_{ij}$ is the normal vector field on $D_{ij}$ pointing from $C_i$ to $C_j$.

Then
\begin{equation}\label{result_approx_velocity}
\left\|S _{ \mathbb{M}  _h} \left( A_h^\mathbf{u} P_{ \mathbb{M}  _h} (f) \right)- (- \mathbf{d} f \cdot \mathbf{u})\right\|_{{L^\infty(M)}}\rightarrow 0
\end{equation}
holds for every $f\in C^2(M)$.
\end{lemma}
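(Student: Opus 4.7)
I would estimate $(A_h^{\mathbf u} P_{\mathbb M_h}(f))_i$ cell by cell and show that, uniformly in $i$, it differs from $-\mathbf d f(\mathbf x_i)\cdot \mathbf u(\mathbf x_i)$ by $O(h)$. The starting observation is that $A_h^{\mathbf u}\cdot \mathbf 1=0$: by the divergence theorem the diagonal entry \eqref{Aii} equals minus the sum of the off-diagonals \eqref{Aij}. Writing $F:=P_{\mathbb M_h}(f)$, this row-null property yields the identity
\[
(A_h^{\mathbf u} F)_i \;=\; -\frac{1}{2\Omega_{ii}}\sum_{j\in N(i),\, j\neq i}(F_j-F_i)\int_{D_{ij}}\mathbf u\cdot \mathbf n_{ij}\,{\rm d}S.
\]
Because $\mathbf x_i$ is the barycenter of $C_i$, a $C^2$ Taylor expansion of $f$ gives $F_k=f(\mathbf x_k)+O(h^2)$ (the linear term integrates to zero), and therefore $F_j-F_i=\nabla f(\mathbf x_{ij})\cdot(\mathbf x_j-\mathbf x_i)+O(h^2)$, uniformly in $i,j$.

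Next I would reduce the flux: since $D_{ij}$ is a flat simplicial face with barycenter $\mathbf x_{ij}$, the regularity $\mathbf u\in W^{2,\infty}\hookrightarrow C^1$ gives $\int_{D_{ij}}\mathbf u\cdot \mathbf n_{ij}\,{\rm d}S=\mathbf u(\mathbf x_{ij})\cdot \mathbf n_{ij}\,|D_{ij}|+O(h^{d+1})$. The crucial geometric input is assumption \eqref{additional_hypothesis}, which lets me replace $\mathbf x_j-\mathbf x_i$ by $2(\mathbf x_{ij}-\mathbf x_i)$ modulo $O(h^{1+\alpha})$; together with the Lipschitz estimate $\nabla f(\mathbf x_{ij})=\nabla f(\mathbf x_i)+O(h)$ and the quasi-uniform bounds $\Omega_{ii}\geq c h^d$, $|D_{ij}|\leq C h^{d-1}$, $|N(i)|\leq C$ inherited from \eqref{S_R}--\eqref{Q_U}, every error aggregates to $O(h)$, leaving
\[
(A_h^{\mathbf u} F)_i \;=\; -\frac{1}{\Omega_{ii}}\sum_{j\in N(i),\, j\neq i} L(\mathbf x_{ij})\,\mathbf u(\mathbf x_{ij})\cdot \mathbf n_{ij}\,|D_{ij}|\;+\;O(h),
\]
where $L(\mathbf x):=\nabla f(\mathbf x_i)\cdot(\mathbf x-\mathbf x_i)$ is a fixed linear function on $M$.

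The final move is a divergence-theorem identification. Since $L\mathbf u\in C^1$ and $\mathbf x_{ij}$ is the face barycenter, the summand equals $\int_{D_{ij}}L\mathbf u\cdot \mathbf n_{ij}\,{\rm d}S$ up to $O(h^{d+1})$, so summing over neighbours yields
\[
(A_h^{\mathbf u} F)_i \;=\; -\frac{1}{\Omega_{ii}}\int_{C_i}\operatorname{div}(L\mathbf u)\,{\rm d}\mathbf x\;+\;O(h).
\]
Expanding $\operatorname{div}(L\mathbf u)=\nabla f(\mathbf x_i)\cdot \mathbf u+L\operatorname{div}\mathbf u$, applying the barycentric identity $\frac{1}{\Omega_{ii}}\int_{C_i}\mathbf u\,{\rm d}\mathbf x=\mathbf u(\mathbf x_i)+O(h^2)$ to the first term, and bounding the second by $\|L\|_{L^\infty(C_i)}\cdot\|\operatorname{div}\mathbf u\|_\infty=O(h)$, gives $(A_h^{\mathbf u} F)_i=-\nabla f(\mathbf x_i)\cdot \mathbf u(\mathbf x_i)+O(h)$ uniformly in $i$. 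Continuity of $\mathbf d f\cdot \mathbf u$ then lets me replace $\mathbf x_i$ by any $\mathbf x\in C_i$ at $O(h)$ cost, proving \eqref{result_approx_velocity}. The main obstacle is not any single estimate but the uniform bookkeeping of error constants across all cells as $h\to 0$; this is exactly what quasi-uniformity and \eqref{additional_hypothesis} ensure, since without the latter the mismatch between the Taylor centre $\tfrac12(\mathbf x_i+\mathbf x_j)$ and the flux centre $\mathbf x_{ij}$ would destroy the leading-order cancellation that produces $-\nabla f\cdot \mathbf u$.
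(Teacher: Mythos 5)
Your argument is correct, and it reaches the estimate by a recognizably different organization of the bookkeeping than the paper's proof. The paper never passes to Taylor polynomials: it writes $-(A^{\mathbf u}_hF)_i-\mathbf d f\cdot\mathbf u(\mathbf x)$ exactly as a sum $e_1+e_2+e_3$ by adding and subtracting the fluxes $\int_{D_{ij}}f\,(\mathbf u\cdot\mathbf n_{ij})\,{\rm d}S$ and applying Gauss' theorem to $f\mathbf u$ on $C_i$; the terms $e_2,e_3$ and the fluctuating part of $e_1$ are then controlled by Poincar\'e--Wirtinger, and the remaining consistency error is isolated in the single quantity $\frac{F_i+F_j}{2}-\frac{1}{|D_{ij}|}\int_{D_{ij}}f\,{\rm d}S$, analyzed by the four-bracket telescoping \eqref{four_terms}. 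You instead use the row-null identity to reduce to the differences $F_j-F_i$, replace $f$ and $\mathbf u$ by their Taylor data at barycenters, and resurrect the cell integral of $\operatorname{div}(L\mathbf u)$ for the linear surrogate $L(\mathbf x)=\nabla f(\mathbf x_i)\cdot(\mathbf x-\mathbf x_i)$ via a second application of the divergence theorem. The essential mechanism is identical in both proofs --- the centered flux is consistent only because the midpoint $\frac{\mathbf x_i+\mathbf x_j}{2}$ agrees with the face barycenter $\mathbf x_{ij}$ up to $O(h^{1+\alpha})$, which is exactly where \eqref{additional_hypothesis} enters in both arguments (your replacement of $\mathbf x_j-\mathbf x_i$ by $2(\mathbf x_{ij}-\mathbf x_i)$ is the third bracket of \eqref{four_terms}), and quasi-uniformity \eqref{Q_U} is needed in both to absorb $|D_{ij}|/\Omega_{ii}\lesssim h^{-1}$ against errors measured in the global $h$. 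What the paper's route buys is slightly weaker regularity in practice (its estimates only invoke $|\mathbf u|_{W^{1,\infty}}$ and give explicit semi-norm constants as in \eqref{e_1}), whereas your route leans on the full $W^{2,\infty}$ hypothesis for the face-midpoint quadrature but yields a cleaner pointwise statement $(A^{\mathbf u}_hF)_i=-\nabla f(\mathbf x_i)\cdot\mathbf u(\mathbf x_i)+O(h)$ with the rate visible at every step. Both give the same overall order $O(h+h^{\alpha})=O(h)$.
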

\begin{proof} By using \eqref{Aii}, \eqref{Aij}, and Gauss' Theorem, we have, for $\mathbf{x}\in C_i$
\begin{align*}
-(A^\mathbf{u}_hF)_i - \mathbf{d}f\cdot \mathbf{u}(\mathbf{x})&=  \frac{1}{\Omega_{ii}} \sum_{j\neq i} \int_{D_{ij}} \left(\frac{F_j+F_i}{2}- f\right)(\mathbf{u}\cdot\mathbf{n} _{ij}) {\rm d}S\\
&\qquad -\frac{1}{\Omega_{ii}}\int_{C_i}(F_i-f)\operatorname{div}\mathbf{u}\,{\rm  d}\mathbf{x}\\
&\qquad + \frac{1}{\Omega_{ii}} \int_{C_i}(\mathbf{d}f\cdot\mathbf{u}) {\rm d}\mathbf{x}- \mathbf{d}f\cdot \mathbf{u}(\mathbf{x}) \\
&=: e_1+ e_2 + e_3,\quad\text{for all $\mathbf{x}\in C_i$},
\end{align*}
where $F_i:=P_{ \mathbb{M}  _h} (f)= \frac{1}{\Omega_{ii}}\int_{C_i}f(\mathbf{x})\,{\rm d}\mathbf{x}$ and where we drop the index $h$ on $F$ for simplicity.

By the Poincar\'e-Wirtinger inequality, we have
\begin{equation}\label{e_2}
\|e_2\|_{L^\infty(C_i)}\leq \| f-F_i\|_{L^\infty(C_i)}\|\operatorname{div}\mathbf{u}\|_{L^\infty(C_i)}\leq C h_{C_i} |f|_{W^{1,\infty}(C_i)}\|\operatorname{div}\mathbf{u}\|_{L^\infty(C_i)}
\end{equation}
and
\begin{equation}\label{e_3}
\|e_3\|_{L^\infty(C_i)}\leq C h_{C_i} |\mathbf{d}f\cdot \mathbf{u}|_{W^{1,\infty}(C_i)},
\end{equation}
where $|\cdot|_{W^{k,\infty}(C_i)}$ denotes the $W^{k,\infty}$ semi-norm on $C_i$.
We write $e_1$ as
\[
e_1=  \frac{1}{\Omega_{ii}} \sum_{j\neq i} \int_{D_{ij}}\! \!\left(\frac{F_j+F_i}{2}- f\right)(\mathbf{u}-\bar{\mathbf{u}})\cdot\mathbf{n} _{ij} {\rm d}S +  \frac{1}{\Omega_{ii}} \sum_{j\neq i} \int_{D_{ij}} \!\!\left(\frac{F_j+F_i}{2}- f\right)(\bar{\mathbf{u}}\cdot\mathbf{n} _{ij}) {\rm d}S,
\]
where $\bar{\mathbf{u}}=\frac{1}{|D_{ij}|}\int _{D_{ij}}( \mathbf{u} \cdot\mathbf{n} _{ij})\,{\rm d}S$. By the Poincar\'e-Wirtinger inequality, the first term is bounded by
\[
\sum_{j\neq i}C\frac{|D_{ij}|}{\Omega_{ii}}\left(h_{C_i} |f| _{W^{1,\infty}(C_i)}+ h_{C_j} |f| _{W^{1,\infty}(C_j)}\right) h_{D_{ij}}|\mathbf{u}\cdot\mathbf{n}_{ij}|_{W^{1,\infty}(D_{ij})}.
\]
By the shape-regularity assumption, we have $\frac{|D_{ij}|}{\Omega_{ii}}\leq C \frac{1}{h_{C_i}}$ and since $h_{D_{ij}}\leq h_{C_i}$ this term 
is bounded by
\[
\sum_{j\neq i}C h\left( |f| _{W^{1,\infty}(C_i)}+  |f| _{W^{1,\infty}(C_j)}\right)|\mathbf{u}|_{W^{1,\infty}(C_i)}.
\]
The second term is bounded by
\begin{equation}\label{intermediate_term}
\sum_{j\neq i}|\bar{\mathbf{u}}\cdot\mathbf{n}_{ij}|\frac{ |D_{ij}|}{\Omega_{ii}}  \left|\frac{F_j+F_i}{2}-\frac{1}{|D_{ij}|} \int_{D_{ij}}f\, {\rm d}S\right|.
\end{equation}
The last factor in \eqref{intermediate_term} is then written as
\begin{equation}\label{four_terms}
\begin{aligned}
&\left[\frac{F_j+F_i}{2} - \frac{f(\mathbf{x}_j)+f(\mathbf{x}_i)}{2} \right] +  \left[\frac{f(\mathbf{x}_j)+f(\mathbf{x}_i)}{2} -  f\left( \frac{\mathbf{x}_j+\mathbf{x}_i}{2}\right)\right]\\
&\qquad\qquad + \left[ f\left( \frac{\mathbf{x}_j+\mathbf{x}_i}{2}\right)-f(\mathbf{x}_{ij})\right] + \left[ f(\mathbf{x}_{ij}) -\frac{1}{|D_{ij}|} \int_{D_{ij}}f \,{\rm d}S\right] ,
\end{aligned}
\end{equation}
where $\mathbf{x}_i$ is the barycenter of cell $C_i$ and $\mathbf{x}_{ij}$ is the barycenter of hyperface $D_{ij}$. Each of these terms is then estimated via Taylor expansion,  
giving the bound $C( |f|_{W^{2,\infty}(D)}h^2+ |f|_{W^{1,\infty}(D)}h^{1+\alpha})$, where the hypothesis \eqref{additional_hypothesis} is used in the treatment of the third term in \eqref{four_terms}. When used in \eqref{intermediate_term} this estimation has to be combined with the shape-regular assumption \eqref{S_R} and the quasi-uniform assumption \eqref{Q_U} to finally give the bound $C\|\mathbf{u}\|_{L^\infty(D)}( |f|_{W^{2,\infty}(D)}h+ |f|_{W^{1,\infty}(D)}h^{\alpha})$ for \eqref{intermediate_term} and hence
\begin{equation}\label{e_1}
\begin{aligned}
\|e_1\|_{L^\infty(C_i)}\leq &C\left(  h |\mathbf{u}|_{W^{1,\infty}(D)} |f|_{W^{1,\infty}(D)} + h  \|\mathbf{u}\|_{L^\infty(D)} |f|_{W^{2,\infty}(D)}\right. \\
&\quad\quad\quad \left.+ h^\alpha \|\mathbf{u}\|_{L^\infty(D)} |f|_{W^{1,\infty}(D)}\right).
\end{aligned}
\end{equation}
The combination of all these estimations gives
\[
\|e_1+e_2+e_3\|_{L^\infty(D)}\leq C\left( h\|\mathbf{u}\|_{W^{1,\infty}(D)}\|f\|_{W^{2,\infty}(D)}+ h^\alpha \|\mathbf{u}\|_{L^\infty(D)}\|f\|_{W^{1,\infty}(D)}\right)
\]
which proves the result.
\end{proof}

From this Lemma, it follows that if $A_h\in \mathfrak{d}(\mathbb{M}_h)$ is an approximation of the vector field $\mathbf{u}$, i.e., it satisfies \eqref{def_approx_velocity}, then $\|A_h-A^\mathbf{u}_h\|\rightarrow 0$, as $h\rightarrow 0$. Formulas \eqref{Aii}--\eqref{Aij} thus give the relation between the Lie algebra element $A$ and the vector field $\mathbf{u}$ it approximates. In particular $(A_h)_{ij}\rightarrow 0$ for $j\notin N(i)$ as $h\rightarrow 0$, i.e., $A_h$ satisfies the sparsity constraint $\mathcal{S}$ in an approximate sense.

\color{black}

Note that the expressions \eqref{Aii}--\eqref{Aij} are consistent with the condition $ A \cdot \mathbf{1} =0$ 
in \eqref{Lie_algebra}. From these expressions, we also deduce that the matrices $A \in \mathfrak{d}(\mathbb{M})$ 
have to satisfy, in addition to the constraint $A\in \mathcal{S} $ imposed earlier, the constraint $ \Omega _{ii} A _{ij} =- \Omega _{jj} A _{ji}$, 
for all $j\neq i$, i.e., $A^\mathsf{T}\Omega + \Omega A$ is a diagonal matrix. We include this in an additional 
nonholonomic constraint given by
\begin{equation}\label{constraint_2} 
\mathcal{R} = \left \{ A \in \mathfrak{d} ( \mathbb{M}  )\mid A^\mathsf{T} \Omega + \Omega A\;\; \text{is diagonal} \right   \}.
\end{equation} 
This constraint is equivalently described by saying that $A$ decomposes as $A=A^a+A^d$, where $A^a \in \mathfrak{d}_{\rm vol}( \mathbb{M}  )$ and $A^d$ is diagonal. For $ A \in \mathcal{R} $, the diagonal part is found from the equality $ A^\mathsf{T} \Omega + \Omega A=2 \Omega  A ^d$.
From the condition $A \cdot \mathbf{1} =0$, we get $A_{ii}^d=- \sum_{j} A _{ij} ^a $.

\bigskip

\paragraph{\textcolor{black}{Semidiscrete} variational equations for compressible fluids.} 
The derivation of the spatially discretized equations for compressible fluids is based on the following proposition. 

\begin{proposition}[Discrete momenta]\label{DM} The dual space to the constraint space $ \mathcal{R} \subset \mathfrak{d} ( \mathbb{M}  )$ 
can be identified with the space of discrete one-forms, i.e., 
\[
\mathfrak{d} ( \mathbb{M}  ) ^\ast / \mathcal{R} ^\circ= \Omega_d^1 ( \mathbb{M}  ).
\]
In particular, given  $ L \in \mathfrak{gl}(N) $, we have $\left\langle L, A \right\rangle =0$, for all $A \in \mathcal{R} $ 
if and only if $ \mathbf{P} (L)=0$, where
\begin{equation}\label{projection} 
\mathbf{P} : \mathfrak{gl}(N) \rightarrow  \Omega_d^1 ( \mathbb{M}  ), \quad  \mathbf{P} (L):=( L- \widehat{L} )^{(A)},
\end{equation} 
with $\widehat{L}_{ij}:= L_{ii}$ and $L^{(A)}:= \frac{1}{2} (L- L^\mathsf{T})$.
\end{proposition}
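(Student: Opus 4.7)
The plan is to compute $\mathcal{R}^\circ\subset\mathfrak{d}(\mathbb{M})^\ast$ explicitly and then identify the quotient $\mathfrak{d}(\mathbb{M})^\ast/\mathcal{R}^\circ$ with $\mathfrak{so}(N)=\Omega_d^1(\mathbb{M})$ via the skew-symmetrization operator. Both assertions will fall out of the same direct computation of the pairing on $\mathcal{R}$.

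First I would reduce from $\mathfrak{gl}(N)$ to $\mathfrak{d}(\mathbb{M})^\ast$ using Lemma~\ref{lemma_1}. Since $\mathcal{R}\subset\mathfrak{d}(\mathbb{M})$, any $A\in\mathcal{R}$ satisfies $A\cdot\mathbf{1}=0$, so from the proof of Lemma~\ref{lemma_1} we get $\langle\widehat{L},A\rangle=0$ and hence $\langle L,A\rangle=\langle L-\widehat{L},A\rangle=\langle\mathbf{Q}(L),A\rangle$. This lets me replace $L$ by its zero-diagonal representative $L-\widehat{L}\in\mathfrak{d}(\mathbb{M})^\ast$ throughout, and reduces the problem to identifying the annihilator of $\mathcal{R}$ inside $\mathfrak{d}(\mathbb{M})^\ast$.

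Next I would parameterize $\mathcal{R}$ via the decomposition $A=A^a+A^d$ recalled just above the Proposition: the off-diagonal entries $A^a_{ij}$ with $i<j$ are completely free, while $A^a_{ji}=-(\Omega_{ii}/\Omega_{jj})A^a_{ij}$ by $\Omega$-antisymmetry and the diagonal part $A^d$ is then determined by the row-null condition. Setting $\ell:=L-\widehat{L}$ (so $\ell_{ii}=0$) and expanding $\langle\ell,A\rangle=\sum_{i,j}\Omega_{ii}\ell_{ij}A_{ij}$: the diagonal contribution from $A^d$ vanishes because $\ell_{ii}=0$, and pairing the $(i,j)$ and $(j,i)$ off-diagonal terms using the $\Omega$-antisymmetry yields
\[
\langle L,A\rangle=\sum_{i<j}\Omega_{ii}\bigl(\ell_{ij}-\ell_{ji}\bigr)A^a_{ij}=2\sum_{i<j}\Omega_{ii}\,\mathbf{P}(L)_{ij}\,A^a_{ij}.
\]
Since the $A^a_{ij}$ with $i<j$ are free parameters and each $\Omega_{ii}>0$, this sum vanishes for every $A\in\mathcal{R}$ if and only if $\mathbf{P}(L)_{ij}=0$ for all $i<j$; by skew-symmetry of $\mathbf{P}(L)$ this is equivalent to $\mathbf{P}(L)=0$, proving the second assertion.

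The first assertion then follows: $\mathcal{R}^\circ\subset\mathfrak{d}(\mathbb{M})^\ast$ is exactly the subspace of symmetric zero-diagonal matrices (the kernel of the skew-symmetrization $(\cdot)^{(A)}$ on $\mathfrak{d}(\mathbb{M})^\ast$), so the quotient is naturally identified with the skew-symmetric matrices $\mathfrak{so}(N)=\Omega_d^1(\mathbb{M})$ via $\ell\mapsto\ell^{(A)}$. Composition with $\mathbf{Q}$ produces the projector $\mathbf{P}(L)=(L-\widehat{L})^{(A)}$. The only real obstacle is bookkeeping of the $\Omega$-weights when pairing the $(i,j)$ and $(j,i)$ terms; the point is that the $\Omega_{ii}$ that multiplies $A^a_{ij}$ in the pairing cancels cleanly against the $\Omega$-antisymmetry relation, leaving a criterion depending only on the skew part of $\ell$.
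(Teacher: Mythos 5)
Your proof is correct and follows essentially the same route as the paper: you identify $\mathcal{R}^\circ$ inside $\mathfrak{d}(\mathbb{M})^\ast$ with the symmetric zero-diagonal matrices and realize the quotient as $\mathfrak{so}(N)$ via skew-symmetrization. The only difference is one of execution: where the paper verifies one inclusion and closes the argument with a dimension count, you obtain both directions at once from the explicit coordinate expression $\left\langle L,A\right\rangle=2\sum_{i<j}\Omega_{ii}\,\mathbf{P}(L)_{ij}A_{ij}$, which is a clean and complete substitute.
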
 
\begin{proof} Recall from Sect.~\ref{intro} that $ \Omega _d ^1 ( \mathbb{M}  )$ is identified with the space 
of $N \times N$ skew-symmetric matrices. From Lemma \ref{lemma_1}, we have 
$\mathfrak{d}(\mathbb{M}) ^\ast = \{ L \in \mathfrak{gl}(N)\mid L_{ii}=0,\;\; \text{for all $i$} \}$.
The annihilator of $ \mathcal{R} $ in  $\mathfrak{d} ( \mathbb{M}  ) ^\ast$ with respect to the 
pairing \eqref{duality_pairing} is given by $\mathcal{R} ^\circ =\{L \in  \mathfrak{d} ( \mathbb{M}  ) ^\ast\mid L=L^\mathsf{T}\}$. 
Indeed, one checks that for $L \in \mathcal{R} ^\circ $, we have $ \left\langle L, A \right\rangle =0$, for all $A \in \mathcal{R} $. 
The result follows since $ \operatorname{dim} \mathcal{R} ^ \circ=  \operatorname{dim} \mathfrak{d} ( \mathbb{M}  ) ^\ast- \operatorname{dim} \mathcal{R} $.

The second part of the proposition easily follows from the first part. 
\end{proof}


The right action of a discrete diffeomorphism $q$ on a discrete function $F \in\Omega _d^0( \mathbb{M}  )$ is given by matrix multiplication, i.e., $F \cdot q=  q ^{-1} F$. The space of discrete densities $ \operatorname{Den}_d( \mathbb{M}  )$ is defined as the dual space to  $\Omega _d^0( \mathbb{M}  )$ with respect to the pairing \eqref{pairing_0}. The right action on a density $D \in \operatorname{Den}_d( \mathbb{M}  )$ is defined by duality as $ \left\langle D \cdot q, F \right\rangle _0:=\left\langle D, F\cdot q^{-1}  \right\rangle _0 $, for all $ F \in\Omega ^0 _d(\mathbb{M}  )$. It is explicitly given by
\begin{equation}\label{action_density} 
D \in \operatorname{Den}_d( \mathbb{M}  ) \longmapsto D \cdot q=  \Omega ^{-1} q^{\mathsf{T}} \Omega D\in \operatorname{Den}_d( \mathbb{M}  ), \quad q \in \mathsf{D}( \mathbb{M}  ).
\end{equation} 
The associated infinitesimal action of a Lie algebra element $A \in \mathfrak{d} ( \mathbb{M}  )$ on a discrete density $D \in \operatorname{Den}_d( \mathbb{M}  )$ is found by taking the derivative of the action with respect to $q$ at the neutral element. We get
\begin{equation}\label{infinitesimal_action} 
D \in \operatorname{Den}_d( \mathbb{M}  ) \longmapsto D \cdot A:= \Omega ^{-1} A^{\mathsf{T}} \Omega D \in  \operatorname{Den}_d( \mathbb{M}  ), \quad A \in \mathfrak{d} ( \mathbb{M}  ).
\end{equation} 

\medskip

Let us assume that a spatially discretized Lagrangian $L_{D_0}: T \mathsf{D} ( \mathbb{M}  ) \rightarrow \mathbb{R}  $ is defined on the tangent bundle $T \mathsf{D} ( \mathbb{M}  )$ of the discrete diffeomorphism group. This Lagrangian depends on the initial density $D_0$ of the fluid in Lagrangian description, hence the notation $L_{D_0}$.
We assume that this Lagrangian preserves the symmetry of the continuous Lagrangian, namely, there exists a Lagrangian $\ell=\ell(A, D): \mathfrak{d} ( \mathbb{M}  ) \times \mathbb{R}  ^N \rightarrow \mathbb{R} $ in Eulerian coordinates such that for any given initial density $D_0$, we can write
\begin{equation}\label{Lagrange_to_Euler} 
L_{D_0}(q, \dot q)= \ell(A, D), \quad A=\dot q q ^{-1} , \;\; D= D _0\cdot  q^{-1} .
\end{equation}

The spatially discretized equations follow from the Lagrange-d'Alembert variational principle (see, e.g., \cite{Bloch2003}) applied to the 
Lagrangian $L_{D_0}$ and with respect to the nonholonomic constraint $ \mathcal{S} \cap \mathcal{R} \subset \mathfrak{d} ( \mathbb{M}  )$. 
This variational principle reads
\begin{equation}\label{VP_Lagr} 
\delta \int_0^T L_{D_0}( q, \dot q) {\rm d}t=0,
\end{equation} 
where $\dot q q ^{-1} \in \mathcal{S} \cap \mathcal{R} $ and with respect to variations $ \delta q$ such that $ \delta q q ^{-1} \in \mathcal{S} \cap \mathcal{R} $ 
and with $ \delta q(0)=\delta q(T)=0$. In the Eulerian description, using \eqref{Lagrange_to_Euler}, this variational principle can be rewritten as
\begin{equation}\label{VP_Euler} 
\delta \int_0^T \ell( A, D) {\rm d}t=0,
\end{equation} 
where $A \in \mathcal{S} \cap \mathcal{R} $ and for variations $ \delta A$ and $ \delta D$ such that
\begin{equation}\label{VP_constraint} 
\delta A=\partial_tB+[B,A] ,\ \  \delta D=- \Omega ^{-1} B^\mathsf{T} \Omega D \ \  \text{with} \ \  B\in\mathcal{S}\cap \mathcal{R} ,\; B(0)=B(T)=0.
\end{equation} 
We call the principle \eqref{VP_Euler}--\eqref{VP_constraint} an \textit{Euler-Poincar\'e-d'Alembert principle}. The expression of the variation $ \delta A$ is 
obtained from the definition $A=\dot q q ^{-1} $, where $B$ is defined as $B= \delta q q ^{-1} \in \mathcal{S} \cap \mathcal{R} $.
The expression of $ \delta D$ is obtained from the definition $D= D_0 \cdot q ^{-1} $ and from \eqref{action_density}. The passage from the 
Lagrange-d'Alembert principle \eqref{VP_Lagr} to the Euler-Poincar\'e-d'Alembert principle \eqref{VP_Euler}--\eqref{VP_constraint} is rigorously 
justified by employing the Euler-Poincar\'e reduction theory, see  \cite{HoMaRa1998}, extended to include the nonholonomic constraint $ \mathcal{S} \cap \mathcal{R} $.

\begin{theorem}[\textcolor{black}{Semidiscrete} variational equations]\label{theorem_1}  Consider a semidiscrete Lagrangian $\ell= \ell (A,D): \mathfrak{d} ( \mathbb{M}  ) \times \operatorname{Den}_d( \mathbb{M}  )  \rightarrow \mathbb{R}  $. Then, the curves $A(t), D(t)$ are critical for the variational principle \eqref{VP_Euler} if and only if it  satisfies the equations
\begin{equation}\label{discrete_EP_compressible} 
\mathbf{P} \left( \frac{d}{dt} \frac{\delta  \ell}{\delta  A}+  \Omega ^{-1} \left[ A^\mathsf{T}, \Omega  \frac{\delta  \ell}{\delta  A}\right] + D \frac{\delta \ell}{\delta D}^\mathsf{T} \right)_{ij}  =0, \quad \text{for all $i \in N(j)$},
\end{equation}
where $ \mathbf{P} : \mathfrak{g}(N) \rightarrow \Omega ^1 _d ( \mathbb{M}  )$ is the projection \eqref{projection}. \textcolor{black}{This is the semidiscrete balance of momenta for compressible fluids.} The \textcolor{black}{semidiscrete} continuity equation reads
\begin{equation}\label{continuity} 
\frac{d}{dt} D+\Omega ^{-1} A^\mathsf{T}\Omega D=0.
\end{equation} 
\end{theorem}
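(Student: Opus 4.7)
}

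The plan is to compute the first variation of the action $\int_0^T \ell(A,D)\,{\rm d}t$ using the constrained variations \eqref{VP_constraint}, then identify the Euler--Lagrange conditions that arise from requiring this variation to vanish for every admissible $B\in\mathcal{S}\cap\mathcal{R}$ with $B(0)=B(T)=0$. The continuity equation \eqref{continuity} will be derived separately as a direct consequence of $D=D_0\cdot q^{-1}$.

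First, I would differentiate under the integral sign to obtain
\[
\delta\!\int_0^T\!\ell\,{\rm d}t=\int_0^T\!\left[\left\langle\tfrac{\delta\ell}{\delta A},\partial_tB+[B,A]\right\rangle+\left\langle\tfrac{\delta\ell}{\delta D},-\Omega^{-1}B^\mathsf{T}\Omega D\right\rangle_0\right]\!{\rm d}t,
\]
and then integrate by parts in the $\partial_tB$ term, using $B(0)=B(T)=0$ to kill boundary contributions; this turns that piece into $-\langle\tfrac{d}{dt}\tfrac{\delta\ell}{\delta A},B\rangle$. The commutator pairing $\langle\tfrac{\delta\ell}{\delta A},[B,A]\rangle$ is rewritten via trace cyclicity and the definition \eqref{duality_pairing} of the pairing as $-\langle\Omega^{-1}[A^\mathsf{T},\Omega\tfrac{\delta\ell}{\delta A}],B\rangle$, which is essentially formula \eqref{ad_star} for the coadjoint action before projection. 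For the density term, using $\langle F,G\rangle_0=F^\mathsf{T}\Omega G$ and trace cyclicity, the scalar $-(\tfrac{\delta\ell}{\delta D})^\mathsf{T}B^\mathsf{T}\Omega D$ rearranges into $-\operatorname{Tr}\!\bigl((D\tfrac{\delta\ell}{\delta D}^\mathsf{T})^\mathsf{T}\Omega B\bigr)=-\langle D\tfrac{\delta\ell}{\delta D}^\mathsf{T},B\rangle$. Collecting the three contributions, the stationarity condition becomes
\[
\int_0^T\left\langle\tfrac{d}{dt}\tfrac{\delta\ell}{\delta A}+\Omega^{-1}\bigl[A^\mathsf{T},\Omega\tfrac{\delta\ell}{\delta A}\bigr]+D\tfrac{\delta\ell}{\delta D}^\mathsf{T},\,B\right\rangle{\rm d}t=0
\]
for every $B\in\mathcal{S}\cap\mathcal{R}$ vanishing at the endpoints.

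The main obstacle is translating this annihilation condition into the projected form \eqref{discrete_EP_compressible}. I would handle the two nonholonomic constraints separately: the constraint $B\in\mathcal{R}$ is treated by Proposition \ref{DM}, which identifies the annihilator $\mathcal{R}^\circ$ and provides the projector $\mathbf{P}$ onto skew-symmetric, zero-diagonal matrices. The sparsity constraint $B\in\mathcal{S}$, which says $B_{ij}=0$ for $j\notin N(i)$, is not the restriction of an algebraic subspace definable by the duality pairing alone; rather, since only matrix entries $B_{ij}$ with $j\in N(i)$ are free (with $B_{ij}$ and $B_{ji}$ related by $\Omega_{ii}B_{ij}=-\Omega_{jj}B_{ji}$ and the diagonal fixed by $B\cdot\mathbf{1}=0$), the vanishing of the pairing for all such $B$ is equivalent to requiring that the projected matrix $\mathbf{P}(\cdots)$ have vanishing $(i,j)$-entry only for those pairs with $i\in N(j)$, recovering exactly \eqref{discrete_EP_compressible}. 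The fundamental lemma of the calculus of variations then removes the time integral.

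Finally, the continuity equation is proved by differentiating the relation $D=D_0\cdot q^{-1}=\Omega^{-1}(q^{-1})^\mathsf{T}\Omega D_0$ from \eqref{action_density}. Using $\tfrac{d}{dt}(q^{-1})=-q^{-1}\dot qq^{-1}$ and substituting $\dot q=Aq$, the chain of identities $\dot q^\mathsf{T}=q^\mathsf{T}A^\mathsf{T}$ and $(q^{-1})^\mathsf{T}\Omega D_0=\Omega D$ immediately yield $\dot D=-\Omega^{-1}A^\mathsf{T}\Omega D$, which is \eqref{continuity}. This last step is a straightforward computation once the definition \eqref{action_density} of the right action on densities is applied.
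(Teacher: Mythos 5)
Your proposal is correct and follows essentially the same route as the paper's proof: vary the action under the constrained variations, integrate by parts in $\partial_t B$, move $A$ and $D$ off of $B$ via trace cyclicity to produce the coadjoint-type and $D\,\delta\ell/\delta D^\mathsf{T}$ terms, and then invoke Proposition \ref{DM} together with the sparsity of $B\in\mathcal{S}$ to reduce the annihilation condition to the $\mathbf{P}$-projected equations on neighboring cells, with the continuity equation obtained by differentiating $D=D_0\cdot q^{-1}$. Your sign bookkeeping and the explicit remark on how the constraints $\mathcal{S}$ and $\mathcal{R}$ combine are consistent with (and slightly more detailed than) the paper's argument.
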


\begin{proof} Given $\ell: \mathfrak{d} ( \mathbb{M}  ) \times \operatorname{Den}_d( \mathbb{M}  ) \rightarrow \mathbb{R}$, the functional derivatives 
$ \frac{\delta \ell}{\delta A} \in \textcolor{black}{\mathfrak{d} ( \mathbb{M}  )^*}$ and $ \frac{\delta \ell}{\delta D} \in \Omega _d ^0 ( \mathbb{M}  )$   are defined with 
respect to the appropriate pairings as
\[
\left\langle \frac{\delta \ell}{\delta A}, \delta A \right\rangle = \left.\frac{d}{d\varepsilon}\right|_{\varepsilon=0} \ell( A+ \varepsilon \delta A, D), 
\quad \left\langle \frac{\delta \ell}{\delta D}, \delta D \right\rangle_0 = \left.\frac{d}{d\varepsilon}\right|_{\varepsilon=0} \ell( A, D+ \varepsilon \delta D),
\]
for all $ \delta A \in \mathfrak{d} ( \mathbb{M}  )$ and $ \delta D\in\mathbb{R}  ^N $.

Application of the variational principle \eqref{VP_Euler} yields
\[
\delta \int_0^T \ell( A, D) {\rm d}t= \int_0^T \left\langle \frac{\delta \ell}{\delta A},  \partial_tB+[B,A] 
\right\rangle  {\rm d}t+\int_0^T \left\langle \frac{\delta \ell}{\delta D},   - \Omega ^{-1} B^\mathsf{T} \Omega D \right\rangle_0  {\rm d}t.
\]
Isolating the matrix $B$, integrating by parts, and using $ B(0)=B(T)=0$, we get
\[
\int_0^T \left\langle \frac{d}{dt} \frac{\delta  \ell}{\delta  A}+  \Omega ^{-1} \left[ A^\mathsf{T},  
\Omega \frac{\delta  \ell}{\delta  A}\right] + D \frac{\delta \ell}{\delta D}^\mathsf{T} , B \right\rangle  {\rm d}t=0,
\]
for all $B \in \mathcal{S} \cap \mathcal{R} $. The result then follows from Proposition \ref{DM} and by noting 
that since $B \in \mathcal{S} $, the equations are verified only for neighboring cells, i.e., $ j \in N(i)$.

The \textcolor{black}{semidiscrete} continuity equation follows from the definitions
\[
D(t)= D_0 \cdot q(t)^{-1} =\Omega ^{-1} q(t)^{-\mathsf{T}} \Omega D_0 \quad\text{and}\quad A(t)= \dot q(t) q(t) ^{-1}. 
\]
\end{proof}

\begin{remark}[Coadjoint operator and discrete Lie derivative]{\rm We have seen that the coadjoint operator for the group 
$\mathsf{D}( \mathbb{M}  )$ with respect to the pairing \eqref{duality_pairing} has the expression
\[
\operatorname{ad}^*_AL= \mathbf{Q} \left(  \Omega ^{-1} [A^\mathsf{T},\Omega L] \right), \quad A \in\mathfrak{d}( \mathbb{M}  ),\quad L\in \mathfrak{d}( \mathbb{M}  ) ^\ast .
\]
The discrete advection term in \eqref{discrete_EP_compressible} is thus related to the coadjoint operator as follows
\[
\mathbf{P} \left(  \Omega ^{-1} \left[ A^\mathsf{T}, \Omega  L\right] \right) 
= \left(  \mathbf{Q} \left(  \Omega ^{-1} [A^\mathsf{T},\Omega L] \right) \right) ^{(A)}= \left( \operatorname{ad}^*_AL \right) ^{(A)},
\]
where we recall that $L^{(A)}:= \frac{1}{2} (L- L^\mathsf{T})$ denotes the skew-symmetric part of a matrix.
We shall compute explicitly this advection operator on simplicial grids below and obtain a discretization of the Lie 
derivative operator on one-form densities.
}
\end{remark}

\paragraph{Discrete Lagrangian for barotropic fluids.} The Lagrangian of a compressible rotating barotropic fluid on a manifold $M$ with Riemannian metric $g$ has the general form
\begin{equation}\label{Lagrangian_barotropic}
\ell( \mathbf{u} , \rho )= \int _M  \Big[ \frac{1}{2}  \rho  \,\mathbf{u}^\flat \cdot \mathbf{u}  + \rho\,\mathbf{R} ^\flat \cdot  \mathbf{u}  - \varepsilon ( \rho )\Big]   {\rm d}\mathbf{x},
\end{equation} 
with $ \mathbf{u} $ the fluid Eulerian velocity, $ \rho $ the mass density, and $ \varepsilon (\rho )$ the internal energy density. 
The vector field $ \mathbf{R} $ is 
the vector potential of the angular velocity of the Earth. The first and second term of the Lagrangian involve the flat operator $ \flat $ associated to the Riemannian metric. 
The intrinsic expression of the equations for the compressible fluid on Riemannian manifolds is
\begin{equation}\label{compressible_fluid_Riemannian} 
\partial _t \mathbf{u} +\mathbf{u} \cdot \nabla \mathbf{u} + 2( \mathbf{i} _ \mathbf{u}\boldsymbol{\Omega} )^\sharp = - \frac{1}{ \rho } \operatorname{grad} p, \quad \partial _t \rho 
+ \operatorname{div}(\rho \mathbf{u} )=0 ,
\end{equation} 
where $ \nabla $, $\operatorname{grad}$, and $ \operatorname{div}$ are, respectively, the covariant derivative, the gradient, and the divergence associated to the 
Riemannian metric $g$, $ \boldsymbol{\Omega}$ is the $2$-form defined by $2 \boldsymbol{\Omega}  := \mathbf{d} \mathbf{R} ^\flat $, 
and $ \mathbf{i} _ \mathbf{u} \boldsymbol{\Omega} =\boldsymbol{\Omega} (\mathbf{u} ,\_\,)$. The pressure is given by 
$ p= \frac{\partial \varepsilon }{\partial \rho }\rho -\varepsilon$. On $M= \mathbb{R}  ^3 $, we recover the classical Coriolis term 
$2( \mathbf{i} _ \mathbf{u}\boldsymbol{\Omega} )^\sharp= 2 \boldsymbol{\Omega} \times \mathbf{u} $ (see, e.g., \cite{Bauer2016}). If, in addition to $ \rho $, the 
internal energy depends on other non dynamical fields, like the bottom topography, then \eqref{compressible_fluid_Riemannian} has the more general expression
\begin{equation}\label{compressible_fluid_Riemannian_more_general} 
\partial _t \mathbf{u} +\mathbf{u} \cdot \nabla \mathbf{u} + 2( \mathbf{i} _ \mathbf{u}\boldsymbol{\Omega} )^\sharp 
= -  \operatorname{grad}  \frac{\partial \varepsilon }{\partial \rho } , \quad \partial _t \rho + \operatorname{div}(\rho \mathbf{u} )=0.
\end{equation} 

A main ingredient in the definition of a discretization of the Lagrangian \eqref{Lagrangian_barotropic} on $ \mathfrak{d}( \mathbb{M}  ) \times\operatorname{Den}_d( \mathbb{M}  )$ 
is a consistent discretization of the Riemannian metric or, equivalently, of the flat operator, on a given mesh $ \mathbb{M}  $. Since only the skew symmetric part of the discrete 
momenta counts (i.e., the discrete momenta are in $ \mathfrak{so}(N)$), we can still use the same flat operators as in \cite{PaMuToKaMaDe2011}. Given such a discrete flat operator 
$\flat $, the discrete Lagrangian associated to \eqref{Lagrangian_barotropic} is
\begin{equation}\label{discrete_Lagrangian} 
\ell(A, D) = \frac{1}{2} \sum_{i,j=1}^ND_i A ^\flat_{ij} A _{ij} \Omega _{ii}+ \sum_{i,j=1}^ND_i R ^\flat_{ij} A _{ij} \Omega _{ii} - \sum_{i=1}^N \epsilon (D_i) \Omega _{ii}.
\end{equation}

\section{Compressible rotating fluids on simplicial grids}\label{section_3}

In this section we shall use Theorem \ref{theorem_1} to derive a structure preserving variational discretization of 
2 dimensional rotating compressible fluids on irregular simplicial grids. We shall then consider the special case of 
the rotating shallow water equations.

\begin{figure}[t]
\centering
{\includegraphics[width=3.5in]{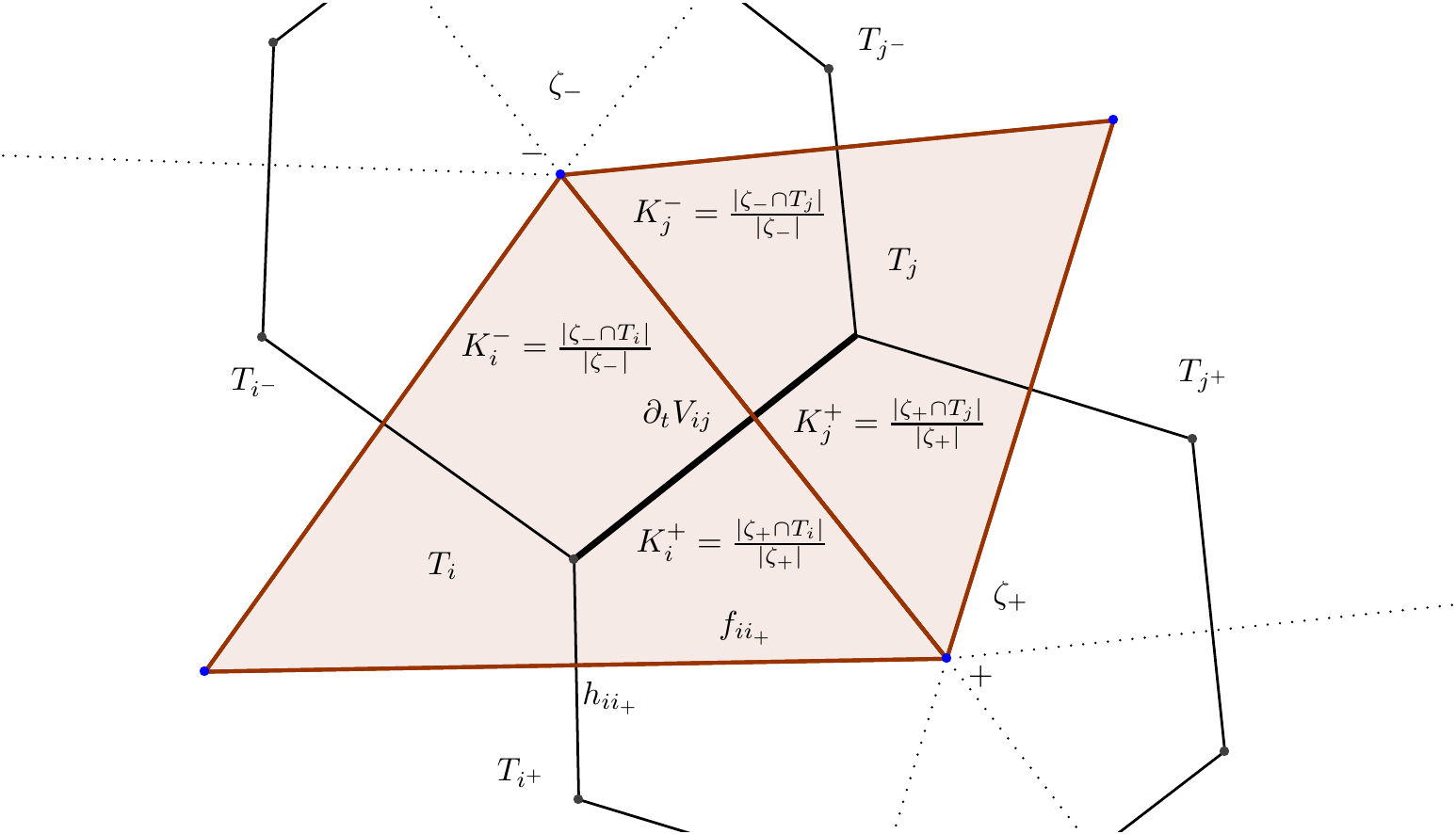}}
\caption{Notation and indexing conventions for the 2D simplicial mesh.}
\label{fig_Notation}
\end{figure}

\medskip

\paragraph{Simplicial grid.} We consider a 2D simplicial mesh on the fluid domain, as described in Fig.~\ref{fig_Notation}. We adopt the following notations:
\begin{small}
\begin{align*} 
f_{ij}:&=  \;\text{length of a primal edge, triangle edge located between triangle $i$ and triangle $j$};\\
h_{ij}:&=  \;\text{length of a dual edge that connect the circumcenters of triangle $i$ and triangle $j$};\\
\Omega_{ii}:&= \;\text{area of a primal simplex (triangle) $T_i$}.
\end{align*} 
\end{small}
The flat operator on a 2D simplicial mesh is defined by the following two conditions, see \cite{PaMuToKaMaDe2011},
\begin{equation}\label{def_flat}
\begin{aligned} 
&A ^\flat _{ij}= 2 \Omega_{ii}\frac{h_{ij} }{f _{ij} }A _{ij}, \quad \text{for $j \in N(i)$},\\
&A ^\flat _{ij}+ A ^\flat _{jk}+ A^\flat _{ki}= K_j ^{e} \left\langle\omega (A ^\flat ),\zeta_{e}\right\rangle, \quad \text{for $i,k \in N(j)$, $k \notin N(i)$},
\end{aligned}
\end{equation}  
where $e$ denotes the node common to triangles $T_i, T_j, T_k$ and $ \zeta _e $ denotes the dual cell to $e$. The vorticity $ \omega $ of a discrete 
one-form ${L} \in \Omega ^1 _d ( \mathbb{M}  )$ is defined by
\[
\left\langle\omega ({L}),\zeta_e\right\rangle:=  \sum_{ h _{mn}\in \partial \zeta _e } L _{mn}
\]
and the constant $K _i ^e $ is defined as
\begin{equation}\label{equ_K}
K^e_k:=\frac{|\zeta_{e}\cap T_k|}{|\zeta_{e}|},
\end{equation}
where $|\zeta_e\cap T_k|$ denotes the area of the intersection of $ \zeta _e$ and $T_k$.
\bigskip

\paragraph{Variational discretization of compressible fluids.} 
A main ingredient in the variational discretization \eqref{discrete_EP_compressible} is the expression of the 
discrete advection term \textcolor{black}{in the momentum equation}. We shall compute it separately in the next lemma, before giving the semidiscrete 
compressible fluid equations.

\begin{lemma}[Discrete Lie derivative on simplicial grids]\label{lemma}  On a 2D simplicial grid the discrete advection term for 
$A,B \in \mathfrak{d} ( \mathbb{M}  )$ and $D\in \operatorname{Den}_d(\mathbb{M}  )$ is given by
\begin{equation}\label{discrete_LD}
\begin{aligned} 
&\mathbf{P} \left(  \Omega ^{-1} \left[ A^\mathsf{T}, \Omega D B ^\flat \right] \right)_{ij}\\
&\vspace{0.2cm}= \omega(B^\flat ) _- \Big(K_i^-\overline{D}_{ji_-} A_{ii_-}
+ K_j^-\overline{D}_{ij_-}A_{jj_-}\Big)-\omega (B^\flat )_+ \Big(K_i^+\overline{D}_{ji_+} A_{ii_+}+ K_j^+\overline{D}_{ij_+}A_{jj_+}\Big)\\
& \vspace{0.4cm}\qquad + \overline{D} _{ij} \Big(\sum_{ k \in N(i)} A_{ik} B_{ik} ^\flat - \sum_{k \in N(j)} A_{jk} B_{jk} ^\flat \Big)  
- (\overline{ D \cdot A}) _{ij} B ^\flat  _{ij}\\
&= :\left( \mathcal{L}^d _A (D B ^\flat ) \right) _{ij} .
\end{aligned}
\end{equation}
In this formula, the indices $i,j$ correspond to two neighboring cells $T_i$ and $T_j$, the indices $i_\pm$ and $j_\pm$ refer to 
cells as indicated on Fig.~\ref{fig_Notation}. The discrete vorticity associated to $B ^\flat $ at the nodes $\pm$ is defined by
\[
\omega (B^\flat )_\pm= \sum_{ h _{mn}\in \partial \zeta _\pm } B ^\flat  _{mn}
\]
and $ \overline{D} _{ij} := \frac{1}{2} (D_i+ D_j)$. In the last term of the third line of \eqref{discrete_LD}, $D\!\cdot\! A$ denotes the infinitesimal 
action of $A \in \mathfrak{d} ( \mathbb{M}  )$ on $D \in \operatorname{Den}_d(\mathbb{M}  )$, see \eqref{infinitesimal_action}.

In the last line we introduce the notation $\mathcal{L}^d _A (D B ^\flat )$ to indicate that this expression is a discrete version of the Lie derivative, see Remark \ref{discrete_L}.
\end{lemma}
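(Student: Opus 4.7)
The plan is to expand $L := \Omega^{-1}[A^{\mathsf T},\Omega D B^\flat]$ entry-wise, apply the projection $\mathbf{P}(L)=(L-\widehat L)^{(A)}$, and then reduce the result using the constraints $A,B\in\mathcal S\cap\mathcal R$ together with the defining identities of the discrete flat operator on the simplicial grid.

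Interpreting $D$ as $\operatorname{diag}(D)$, a direct computation gives
\[
L_{ij}=\frac{1}{\Omega_{ii}}\sum_k\Omega_{kk}A_{ki}D_k B^\flat_{kj}\;-\;D_i\sum_k B^\flat_{ik}A_{jk}.
\]
The $\mathcal R$-condition $\Omega_{kk}A_{ki}=-\Omega_{ii}A_{ik}$ for $k\neq i$ converts the first sum to $-\sum_{k\neq i}A_{ik}D_k B^\flat_{kj}$, while the $\mathcal S$-condition restricts the summation over $k$ to $N(i)$ (resp.\ $N(j)$) in the first (resp.\ second) sum. Any purely diagonal contribution ($k=i$ in the first sum, $k=j$ in the second) is killed by the $-\widehat L$ piece of $\mathbf{P}$, so I can work modulo these.

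Next I form the antisymmetrisation $\mathbf{P}(L)_{ij}=\tfrac12(L_{ij}-L_{ji})$. The skew-symmetry $B^\flat_{ji}=-B^\flat_{ij}$ combines each pair $D_i B^\flat_{ij}$ and $D_j B^\flat_{ji}$ into $\overline D_{ij}B^\flat_{ij}$, producing the averaged density in the formula. Recognising $\Omega^{-1}A^{\mathsf T}\Omega D$ as the infinitesimal action $D\!\cdot\! A$ from \eqref{infinitesimal_action}, together with the ``self-contraction'' contributions ($k=j$ in the first sum and $k=i$ in the second), yields exactly the second-line terms $\overline D_{ij}\bigl(\sum_{k\in N(i)}A_{ik}B^\flat_{ik}-\sum_{k\in N(j)}A_{jk}B^\flat_{jk}\bigr)-(\overline{D\!\cdot\!A})_{ij}B^\flat_{ij}$.

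The remaining cross-terms, indexed by $k\in N(i)\setminus\{j\}=\{i_+,i_-\}$ and $k\in N(j)\setminus\{i\}=\{j_+,j_-\}$, are where the vorticity contributions arise. For $k=i_\pm$ the entry $B^\flat_{i_\pm j}$ pairs two cells that share node $\pm$ but not an edge, and so must be evaluated via the closure identity $B^\flat_{ij}+B^\flat_{ji_\pm}+B^\flat_{i_\pm i}=K^\pm_{\!j}\langle\omega(B^\flat),\zeta_\pm\rangle$ from \eqref{def_flat}; an analogous substitution applies at each node for $k=j_\pm$. Collecting the coefficients of $\omega(B^\flat)_\pm$ produces the first-line terms $\omega(B^\flat)_\mp(K_i^\mp\overline D_{ji_\mp}A_{ii_\mp}+K_j^\mp\overline D_{ij_\mp}A_{jj_\mp})$, while the leftover edge pieces left behind by the closure identity merge into the second-line sums already identified. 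The main obstacle is the sign and index bookkeeping: tracking which triangle plays the role of the ``$j$'' in each instance of the closure identity at each of the two nodes $\pm$, and verifying that the residual edge-terms reassemble exactly into the $\overline D_{ij}$-prefactored sums rather than producing spurious extra contributions.
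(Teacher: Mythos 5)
Your overall strategy is exactly the paper's: expand $L=\Omega^{-1}[A^{\mathsf T},\Omega DB^\flat]$ entry-wise, use the $\mathcal R$-identity $\Omega_{kk}A_{ki}=-\Omega_{ii}A_{ik}$ and the sparsity of $A$, invoke the closure identity in \eqref{def_flat} to produce the vorticity terms, and then apply $\mathbf P$. However, there is a genuine error in how you handle the projection. From $\widehat L_{ij}:=L_{ii}$ one gets $(L-\widehat L)_{ij}=L_{ij}-L_{ii}$, so
\[
\mathbf P(L)_{ij}=\tfrac12\left(L_{ij}-L_{ji}-L_{ii}+L_{jj}\right),
\]
not $\tfrac12(L_{ij}-L_{ji})$. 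The $-\widehat L$ piece does not ``kill'' diagonal contributions; it injects the diagonal entries of $L$ into every off-diagonal slot. Here these diagonal entries are generically nonzero,
\[
L_{ii}=\big(\Omega^{-1}[A^{\mathsf T},\Omega DB^\flat]\big)_{ii}=\sum_{k\in N(i)}(D_k-D_i)A_{ik}B^\flat_{ik},
\]
and they are indispensable: for instance, the contribution of $L_{ij}$ to the coefficient of $A_{ii_-}B^\flat_{ii_-}$ carries the weight $D_{i_-}$, and it is precisely the $(D_i-D_{i_-})A_{ii_-}B^\flat_{ii_-}$ term coming from $-L_{ii}$ that converts this into the $\overline D_{ij}$-weighted sum appearing in \eqref{discrete_LD}. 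Dropping $-L_{ii}+L_{jj}$ leaves you with weights like $\tfrac12(D_{i_-}+D_j)$ instead of $\overline D_{ij}$, so the stated formula cannot be reached.

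A secondary bookkeeping slip: the ``self-contraction'' terms you invoke ($k=j$ in the first sum and $k=i$ in the second) vanish identically, since $B^\flat$ is a discrete one-form, i.e.\ a skew-symmetric matrix with $B^\flat_{ii}=B^\flat_{jj}=0$; they therefore cannot supply any of the second-line terms. The terms that actually matter there are the $k=i$ and $k=j$ contributions $D_iB^\flat_{ij}(A_{ii}-A_{jj})$ (which you propose to discard) together with the diagonal entries $L_{ii},L_{jj}$ above and the definition $(D\cdot A)_i=A_{ii}D_i-\sum_{k\in N(i),k\neq i}A_{ik}D_k$. The paper's proof computes exactly these three ingredients --- the off-diagonal entries with the closure identity inserted, the diagonal entries, and $D\cdot A$ --- and assembles them through the full formula for $\mathbf P(X)_{ij}$; you need to restore the last two to make your computation close.
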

\begin{proof} Using the second equation in \eqref{def_flat} and $A_{ki}\Omega_{kk}=- \Omega_{ii}A_{ik}$, we compute the matrix element $i\neq j$ for $i \in N(j)$,
\begin{align*}
\Big( \Omega^{-1} &[ A^\mathsf{T},\Omega DB ^\flat ] \Big) _{ij} = \\ 
& \omega (B)_- \left( K _i ^- D_{i_-}A_{ii_-}+ K _j ^- D_iA_{jj_-} \right) 
- \omega (B)_+  \left( K _i ^+ D_{i_+}A_{ii_+}+ K _j ^+ D_iA_{jj_+} \right)\\
& \quad - A_{ii_-} D_{i_-}(B ^\flat_{i_-i}+B ^\flat_{ij} )-A_{ii_+} D_{i_+}(B ^\flat_{i_+i}+B ^\flat _{ij} )- A_{jj_-} D_{i}(B ^\flat_{jj_-}+B ^\flat _{ij} )\\
& \quad - A_{jj_+} D_i(B ^\flat_{jj_+}+B ^\flat _{ij} )+ D_i B ^\flat_{ij}(A_{ii}-A_{jj}).
\end{align*}
We also compute the diagonal matrix elements
\[
\left( \Omega^{-1} [ A^\mathsf{T},\Omega DB ^\flat ] \right) _{ii}= \sum_{k\in N(i)} (D_k-D_i) A_{ik}B^\flat _{ik}
\]
and the expression
\[
(D \cdot A) _i= ( \Omega ^{-1} A^\mathsf{T} \Omega D)_i= A_{ii}D_i - \sum_{k \in N(i), k\neq i}A_{ik}D_k.
\]
The final formula \eqref{discrete_LD} is obtained by using the three expressions above, the formula $ \mathbf{P} (X)_{ij} = \frac{1}{2} (X_{ij}- X_{ji}-X_{ii}+X_{jj})$, 
and noting several cancellations and rearrangements. 
\end{proof}

\begin{remark}[Continuous and discrete Lie derivative]\label{discrete_L}{\rm  Given two vector fields $ \mathbf{u} , \mathbf{v} $ on a manifold and a density $ \rho $, the Lie derivative $\mathbf{L}$ of the one-form density $ \rho \mathbf{v} ^\flat $ is given by
\begin{equation}\label{continuous_LD} 
\mathbf{L}_{ \mathbf{u} } ( \rho \,\mathbf{v} ^\flat )=  \rho\pounds_\mathbf{u}\mathbf{v}^\flat+\operatorname{div}(\rho \mathbf{u})\mathbf{v}^\flat=  \rho\, \mathbf{i} _{\mathbf{u} } \mathbf{d} \mathbf{v} ^\flat +\rho  \, 
\mathbf{d} ( \mathbf{v} ^\flat \cdot \mathbf{u} )+\operatorname{div}(\rho\,\mathbf{u} ) \mathbf{v} ^\flat ,
\end{equation} 
where $ \mathbf{d} $ is the exterior derivative and \textcolor{black}{$\pounds$ is the Lie derivative of a one-form}. Denoting by $ \omega := \mathbf{d} \mathbf{v} ^\flat $ the vorticity 
of $ \mathbf{v} $, we note the strict analogy between this formula and its discrete counterpart \eqref{discrete_LD}. 
The link between the discrete and continuous objects is established by
$ \mathbf{v} \sim B$, $ \mathbf{u} \sim A$, $ \rho \sim D$. The first two terms in \eqref{discrete_LD} correspond to the term  
$ \rho\, \mathbf{i} _{\mathbf{u} } \mathbf{d} \mathbf{v} ^\flat= \mathbf{i} _{ \rho \mathbf{u} } \omega $, the third term 
in \eqref{discrete_LD} corresponds to  $\rho  \, \mathbf{d} ( \mathbf{v} ^\flat \cdot \mathbf{u} )$ and the fourth term 
corresponds to $\operatorname{div}(\rho\,\mathbf{u} ) \mathbf{v} ^\flat $.}
\end{remark}

\begin{theorem} The \textcolor{black}{semidiscrete} equations \eqref{discrete_EP_compressible} for the discrete Lagrangian \eqref{discrete_Lagrangian} on a simplicial grid are given by 
{\fontsize{9pt}{10pt}\selectfont
\begin{equation}\label{simplicial_scheme}
\hspace{-0.4cm}\left\{
\begin{array}{l}
\displaystyle \vspace{0.2cm}\!\!\overline{D}_{ij} \frac{d}{dt} A^\flat_{ij} \\
\displaystyle \vspace{0.2cm} \qquad+  \omega _+ \left(K_i^+\overline{D}_{ji_+} A_{ii_+}+ K_j^+\overline{D}_{ij_+}A_{jj_+}\right) -\omega _- \left(K_i^-\overline{D}_{ji_-} A_{ii_-}
+ K_j^-\overline{D}_{ij_-}A_{jj_-}\right) \\
\displaystyle \vspace{0.2cm} \qquad  + \overline{D}_{ij} \frac{1}{2} \left( A_{ii_-} ^\flat A_{ii_-} +  A_{ii_+} ^\flat A_{ii_+}  + A_{ij} ^\flat A_{ij} - A_{ji} ^\flat A_{ji} 
 -  A_{jj_-} ^\flat A_{jj_-}- A_{jj_+} ^\flat A_{jj_+} \right) \\
\displaystyle \vspace{0.2cm} \qquad + \overline{D}_{ij} \left(  \frac{\partial \epsilon }{\partial D_i}- \frac{\partial \epsilon }{\partial D_j}  \right) =0\\
\displaystyle\!\!\frac{d}{dt} D_i = A_{ii_-}D_{i_-}+ A_{ii_+}D_{i_+} + A_{ij}D_{j}  - A_{ii}D_i,
\end{array} \right.
\end{equation}
}where $\overline{D}_{ij}:=  \frac{1}{2} (D_i+D_j)$ and $ \omega _{\pm}:=\sum_{ h_{mn}\in \partial \zeta _{\pm} } (A ^\flat  _{mn}+  R^\flat _{mn})$ is the discrete absolute vorticity at the node $\pm$, see Fig.~\ref{fig_Notation}.
\end{theorem}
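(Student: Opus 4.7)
The plan is to apply Theorem~\ref{theorem_1} to the Lagrangian \eqref{discrete_Lagrangian}, using Lemma~\ref{lemma} for the advection term and the continuity equation \eqref{continuity} to simplify density time-derivatives.

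First, I would compute the functional derivatives. Since $A\mapsto A^\flat$ is linear, direct variation of \eqref{discrete_Lagrangian}, together with the symmetry of the quadratic form built into \eqref{def_flat} under the constraint $\mathcal{R}$, gives the representatives
\[
\Big(\tfrac{\delta\ell}{\delta A}\Big)_{ij}=D_i\big(A^\flat_{ij}+R^\flat_{ij}\big),\qquad
\Big(\tfrac{\delta\ell}{\delta D}\Big)_i=\tfrac{1}{2}\!\sum_{j\in N(i)}A_{ij}A^\flat_{ij}+\!\sum_{j\in N(i)}R^\flat_{ij}A_{ij}-\epsilon'(D_i).
\]
Setting $B:=A+R$, so that $\delta\ell/\delta A=DB^\flat$ and $B^\flat=A^\flat+R^\flat$, the vorticity factor $\omega(B^\flat)_\pm$ appearing in the first two terms of \eqref{discrete_LD} becomes exactly the discrete absolute vorticity $\omega_\pm=\sum_{h_{mn}\in\partial\zeta_\pm}(A^\flat_{mn}+R^\flat_{mn})$ of the statement.

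Next, I would evaluate the three contributions in \eqref{discrete_EP_compressible} under $\mathbf{P}$. By Lemma~\ref{lemma}, the advection term equals $\mathcal{L}^d_A(DB^\flat)$. The Bernoulli term is computed from the identity $\mathbf{P}(X)_{ij}=\tfrac{1}{2}(X_{ij}-X_{ji}-X_{ii}+X_{jj})$ applied to $(Db^\mathsf{T})_{ij}=D_ib_j$, giving $\mathbf{P}(Db^\mathsf{T})_{ij}=\overline{D}_{ij}(b_j-b_i)$; substituting $b_\cdot=\delta\ell/\delta D$ yields differences of kinetic energy per unit mass, of the Coriolis quantity $\sum_k R^\flat_{\cdot k}A_{\cdot k}$, and of $\epsilon'(D_\cdot)$ between cells $i$ and $j$. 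The time-derivative term is $\mathbf{P}(\frac{d}{dt}\frac{\delta\ell}{\delta A})_{ij}=\frac{d}{dt}(\overline{D}_{ij}B^\flat_{ij})$, because $\mathbf{P}$ is linear, $R^\flat$ is time-independent, and $\mathbf{P}(DB^\flat)_{ij}=\overline{D}_{ij}B^\flat_{ij}$ follows from the skew-symmetry of $B^\flat$.

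Adding the three contributions and simplifying, the Coriolis portion of the Bernoulli difference $\overline{D}_{ij}(b_j-b_i)$ cancels exactly against the $R^\flat$-piece of the middle term of \eqref{discrete_LD} obtained by splitting $B^\flat=A^\flat+R^\flat$, so that Coriolis enters \eqref{simplicial_scheme} only through the absolute vorticity $\omega_\pm$. The $A^\flat$-pieces combine to produce the $\tfrac{1}{2}\overline{D}_{ij}(\cdots)$ expression in \eqref{simplicial_scheme}. The density time-derivatives produced by $\frac{d}{dt}(\overline{D}_{ij}B^\flat_{ij})$ are then reconciled with the last term $-(\overline{D\cdot A})_{ij}B^\flat_{ij}$ of \eqref{discrete_LD} using the semidiscrete continuity equation \eqref{continuity} (which says $(D\cdot A)_i=-\dot D_i$), leaving only $\overline{D}_{ij}\frac{d}{dt}A^\flat_{ij}$. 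Finally, the semidiscrete continuity equation in \eqref{simplicial_scheme} is obtained from \eqref{continuity} by expanding $(\Omega^{-1}A^\mathsf{T}\Omega D)_i$ and using the constraint $\Omega_{jj}A_{ji}=-\Omega_{ii}A_{ij}$ for $j\neq i$ from $\mathcal{R}$.

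The principal obstacle is the bookkeeping in this last step: tracking the $\pm$-labelling of the two nodes adjacent to the edge $\{i,j\}$ in Fig.~\ref{fig_Notation}, the signs produced by the skew-symmetrizing projection $\mathbf{P}$, and the precise index ranges in the sums, so as to verify that the Coriolis cross-terms cancel and that the $\dot{\overline{D}}$-contributions drop out via continuity, yielding exactly the expression \eqref{simplicial_scheme}. Once these cancellations are verified, the remainder is a direct calculation.
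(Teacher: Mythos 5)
Your proposal follows essentially the same route as the paper's proof: the same functional derivatives, the application of Lemma~\ref{lemma} with $B^\flat=A^\flat+R^\flat$, the identity $\mathbf{P}(D b^\mathsf{T})_{ij}=\overline{D}_{ij}(b_j-b_i)$ for the gradient term, and the use of the semidiscrete continuity equation to reduce $\frac{d}{dt}(\overline{D}_{ij}B^\flat_{ij})$ to $\overline{D}_{ij}\frac{d}{dt}A^\flat_{ij}$, with the Coriolis cross-terms cancelling between the ``$\mathbf{d}(\mathbf{v}^\flat\cdot\mathbf{u})$'' part of the discrete Lie derivative and the Bernoulli difference. The cancellations you flag as remaining bookkeeping are exactly the ones the paper itself invokes without further detail, so the proposal is correct and matches the paper's argument.
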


\begin{proof} 
The functional derivatives of the discrete Lagrangian \eqref{discrete_Lagrangian} with respect to the pairing $ \left\langle \,, \right\rangle _0$ and $ \left\langle \,, \right\rangle _1$ are
\[
\frac{\delta \ell}{\delta A _{ij} }= D _i (A ^\flat _{ij} + R ^\flat _{ij} ) \quad\text{and}\quad\frac{\delta \ell}{\delta D_i}= \frac{1}{2} \sum_ j A ^\flat_{ij} A _{ij} + \sum_ j R ^\flat_{ij} A _{ij} - \frac{\partial \epsilon }{\partial D_i }.
\]
The system \eqref{simplicial_scheme} results from a series of computations. For the first term in \eqref{discrete_EP_compressible}, we have
\begin{align*} 
\frac{d}{dt} \mathbf{P} \left( \frac{\delta \ell}{\delta A}\right) _{ij} &= \frac{d}{dt} \left( \overline{D}_{ij}(A ^\flat _{ij} + R ^\flat _{ij} ) \right) = \frac{d}{dt} \overline{D}_{ij}  (A ^\flat _{ij} + R ^\flat _{ij} )+ \overline{D}_{ij}\frac{d}{dt} A ^\flat _{ij}\\
&= -(\overline{D\cdot A} )_{ij}(A ^\flat _{ij} + R ^\flat _{ij} )+ \overline{D}_{ij}\frac{d}{dt} A ^\flat _{ij},
\end{align*}
where in the last equality we use the discrete continuity equation \eqref{continuity}.
For the second term we use Lemma \ref{lemma} with $B ^\flat = A^\flat + R^\flat $. For the last term in \eqref{discrete_EP_compressible}, we compute
\begin{align*}
\mathbf{P} \left( D \frac{\delta \ell}{\delta D}^\mathsf{T}\right) _{ij} &=  \overline{D} _{ij}\Big( \sum_{k\in N(j)} \Big( \frac{1}{2} A^\flat _{jk}+R ^\flat _{jk} \Big)A_{jk}- \frac{\partial\varepsilon }{\partial D_j}  \Big)\\
&\quad \quad \quad \quad - \overline{D} _{ij} \Big(  \sum_{k\in N(i)} \Big( \frac{1}{2} A^\flat _{ik}+R ^\flat _{ik} \Big)A_{ik} - \frac{\partial\varepsilon }{\partial D_i}\Big).
\end{align*}
Adding these three terms and noting some cancellations, we get \eqref{simplicial_scheme}. 
\end{proof}


\begin{remark}{\rm The momentum equations \eqref{compressible_fluid_Riemannian} can equivalently be written in the space of one-forms as
\begin{equation}\label{one_form} 
\rho \,\partial _t \mathbf{u} ^\flat + \mathbf{i} _{ \rho \mathbf{u} } \mathbf{d} ( \mathbf{u}^\flat + \mathbf{R}^\flat )=- \rho\,\mathbf{d} \Big(  \frac{1}{2} | \mathbf{u} | ^2 + \frac{\partial \varepsilon }{\partial\rho} \Big).
\end{equation} 
It is this expression of the compressible equations that appears in a discretized form in the variational discretization \eqref{simplicial_scheme}.}
\end{remark}

\begin{remark}{\rm
The advection term in \eqref{simplicial_scheme} consists of two parts associated to either node $+$ or node $-$: 
the weighted sums of the absolute vorticity $\omega_+$ with those elements $A_{ii_+}$ and $A_{jj_+}$ that
contribute to $\omega_+$ and another weighted sum of $\omega_-$ with $A_{ii_-}$ and $A_{jj_-}$. This 
form follows naturally from the variational principle. In particular, it does not require the reconstruction 
of tangential velocities out of the prognostic normal ones, in contrast to standard C-grid schemes.

In standard C-grid discretizations of the shallow water equations (see, e.g., \cite{BauerPHD2013,Ringler2010,Thuburn2009}), 
the advection term is different. Usually, it is a product of a reconstructed tangential velocity value 
and an averaged absolute vorticity value evaluated at the triangles' edge midpoints. 
However, it is a nontrivial task to develop reconstructions and suitable average methods that provide well-behaving, 
conservative discretizations, as the resulting schemes might be inconsistent \cite{ThuburnCotter2015}.
}
\end{remark}

\medskip

\paragraph{The case of the rotating shallow water equations.} For the rotating shallow water equations, the density variable is 
the fluid depth denoted $h$. The Lagrangian is
\[
\ell( \mathbf{u} , h)= \int_M \Big[ \frac{1}{2} h  \,\mathbf{u} ^\flat \cdot \mathbf{u} + h \,\mathbf{R}^\flat  \cdot \mathbf{u}
- \frac{1}{2} g(h + B) ^2 \Big]  {\rm d} \mathbf{x},
\]
where $B$ is the bottom topography and where $h+B$ describes the free surface elevation of the fluid.
The above developments directly apply to this case by choosing the internal energy $ \varepsilon (h)= \frac{1}{2} g(h+B) ^2 $. 
Equations \eqref{compressible_fluid_Riemannian_more_general} becomes
\[
\partial _t \mathbf{u} +\mathbf{u} \cdot \nabla \mathbf{u} + 2( \mathbf{i} _ \mathbf{u}\boldsymbol{\Omega} )^\sharp = 
- g\operatorname{grad} (h+B), \quad \partial _t h + \operatorname{div}(h \mathbf{u} )=0 ,
\]
i.e., the rotating shallow water equations. The formulation \eqref{one_form} becomes
\begin{equation}\label{one_form_RSW} 
h \,\partial _t \mathbf{u} ^\flat + \mathbf{i} _{ h \mathbf{u} } \mathbf{d} ( \mathbf{u}^\flat 
+ \mathbf{R}^\flat )=- h\,\mathbf{d} \Big(  \frac{1}{2} | \mathbf{u} | ^2 + g(h+B) \Big).
\end{equation} 
The discrete rotating shallow water equations are obtained from the first equation in \eqref{simplicial_scheme} by replacing the last term with
\[
\overline{D}_{ij}g \big( (D_i +  B_i) - (D_j +  B_j)\big),
\]
where $D _i $ is the discretization of the fluid depth $h$. In this case \eqref{simplicial_scheme} becomes the discrete form of the formulation \eqref{one_form_RSW} of the rotating shallow water momentum equation.

\medskip

\begin{remark}{\rm Note that the first equation in \eqref{simplicial_scheme} follows from \eqref{discrete_EP_compressible} by using the advection equation \eqref{continuity}. 
Without making use of this, we get
{\fontsize{10pt}{9pt}\selectfont
\begin{equation}\label{discrete_EP_simplicial} 
\begin{aligned} 
&\frac{d}{dt} \big(\overline{D} _{ij} (A _{ij}^\flat + R_{ij}^\flat )\big) \\
&\;+ \omega _+ \left(K_i^+\overline{D}_{ji_+} A_{ii_+}+ K_j^+\overline{D}_{ij_+}A_{jj_+}\right) -\omega _- \left(K_i^-\overline{D}_{ji_-} A_{ii_-}
+ K_j^-\overline{D}_{ij_-}A_{jj_-}\right)\\
&\;  +\overline{D}_{ij} \frac{1}{2} \left( A_{ii_-} ^\flat A_{ii_-} +  A_{ii_+} ^\flat A_{ii_+}  + A_{ij} ^\flat A_{ij} - A_{ji} ^\flat A_{ji} 
 -  A_{jj_-} ^\flat A_{jj_-}- A_{jj_+} ^\flat A_{jj_+} \right)\\
&\; -(\overline{D \cdot A}) _{ij} (A _{ij} ^\flat + R _{ij}^\flat )+\overline{D}_{ij} \left(  \frac{\partial \epsilon }{\partial D_i}- \frac{\partial \epsilon }{\partial D_j}  \right)=0.
\end{aligned}
\end{equation}}}
\end{remark}

We end this section by giving in Table \ref{table} a summary that enlightens the correspondence between the continuous and discrete objects.

\begin{small}
\begin{table}[h!]\centering
\centering
\begin{tabular}{|c | c |} 
\hline
Continuous diffeomorphism & Discrete diffeomorphisms\\ [0.25ex] 
\hline
$\operatorname{Diff}(M)\ni\varphi $ & $\mathsf{D}( \mathbb{M}  )\ni q$ \\[0.25ex] 
\hline\hline
Lie algebra & Discrete diffeomorphisms\\ [0.25ex] 
\hline
$ \mathfrak{X}  (M)\ni\mathbf{u} $ & $ \mathfrak{d} ( \mathbb{M}  ) \ni A$ \\[0.25ex] 
\hline\hline
Group action on functions & Group action on discrete functions\\ [0.25ex] 
\hline
$f \mapsto f \circ \varphi $ & $F\mapsto q^{-1} F$  \\ [0.25ex] 
\hline\hline
Lie algebra action on functions & Lie algebra action on discrete functions\\ [0.25ex] 
\hline
$ f\mapsto \mathbf{d} f \cdot \mathbf{u}$ & $ F\mapsto -A F$  \\ [0.25ex] 
\hline\hline
Group action on densities & Group action on discrete densities\\ [0.25ex] 
\hline
$ \rho \mapsto ( \rho  \circ \varphi)J \varphi$ & $ D\mapsto \Omega^{-1} q^\mathsf{T}\Omega D$  \\ [0.25ex] 
\hline\hline
Lie algebra action on densities & Lie algebra action on discrete densities\\ [0.25ex] 
\hline
$ \rho \mapsto \operatorname{div}(\rho\mathbf{u} )$ & $D \mapsto \Omega^{-1} A^\mathsf{T}\Omega D$  \\ [0.25ex] 
\hline\hline
Hamilton's principle & Lagrange-d'Alembert principle  \\ [0.25ex] 
\hline
$\delta \int_0^T L_{ \rho _0}( \varphi , \dot{\varphi}) dt=0,$ & $ \delta \int_0^T L_{ D _0}( q , \dot q ) dt=0$, $\dot q q ^{-1} \in \mathcal{S} \cap \mathcal{R} $,\\
for arbitrary variations $\delta \varphi $
& for variations $\delta q q ^{-1} \in \mathcal{S} \cap \mathcal{R} $\\ [0.25ex]
\hline\hline
Eulerian velocity and density& Eulerian discrete velocity and discrete density  \\ [0.25ex] 
\hline
$ \mathbf{u} = \dot{ \varphi } \circ \varphi^{-1}$, $ \rho =(\rho _0 \circ \varphi ^{-1} ) J \varphi^{-1} $
& $ A= \dot{ q} q^{-1}$, $ D =\Omega^{-1} q^{-\mathsf{T}}\Omega D_0$\\ [0.25ex]
\hline\hline
Euler-Poincar\'e principle & Euler-Poincar\'e-d'Alembert principle\\ [0.25ex] 
\hline
$\delta \int_0^T \ell( \mathbf{u}  ,  \rho ) dt=0$, \;\; $ \delta \mathbf{u} =\partial _t \boldsymbol{\zeta} + [\boldsymbol{\zeta}, \mathbf{u} ]$, 
&$ \delta \int_0^T \ell( A , D ) dt=0$, \quad $ \delta A= \partial_t B+[B,A]$,  \\ [0.25ex]
\hspace{2.63cm}$ \delta \rho = - \operatorname{div}( \rho \boldsymbol{\zeta} )$
& \hspace{3.05cm}  $\delta D=- \Omega ^{-1} B^\mathsf{T} \Omega D$,\\[0.25ex]
\qquad  & \hspace{2.5cm}  $A, B \in \mathcal{S} \cap \mathcal{R}$ \\
\hline\hline
Compressible Euler equations & Discrete compressible Euler equations\\ [0.25ex] 
\hline
Form I: & Form I: on 2D simplicial grid \\
{\fontsize{9pt}{9pt}\selectfont$ \partial _t ( \rho ( \mathbf{u} ^\flat + \mathbf{R} ^\flat ))
+ \mathbf{i} _{\rho \mathbf{u} } \omega + \operatorname{div}( \rho\mathbf{u} ) ( \mathbf{u} ^\flat + \mathbf{R} ^\flat )$} 
& Equation \eqref{discrete_EP_simplicial}  \\
\hspace{3.2cm}{\fontsize{9pt}{9pt}\selectfont 
$=- \rho \,\mathbf{d} \big(  \frac{1}{2} | \mathbf{u} | ^2 + \frac{\partial \varepsilon }{\partial \rho } \big)$ } 
&  \\ [0.25ex]
Form II : & Form II: on 2D simplicial grid\\
{\fontsize{9pt}{9pt}\selectfont$  \rho\partial _t  \mathbf{u} ^\flat + \mathbf{i} _{\rho \mathbf{u} } \omega 
= - \rho \,\mathbf{d} \big( \frac{1}{2} | \mathbf{u} | ^2 +  \frac{\partial \varepsilon }{\partial \rho } \big) $}
&  Equation \eqref{simplicial_scheme} \\
\hline
\end{tabular}
\vspace{1em}
\caption{Continuous and discrete objects} \label{table} 
\end{table}
\end{small}

\section{Time discretization and solving algorithm}
\label{sec_explizit_scheme}

  We write the variational scheme of equations \eqref{simplicial_scheme} for the rotating shallow water equations explicitly in terms of 
  the normal velocities $V_{ij}$ and the cell densities (i.e. fluid depth) $D_i$. In particular,
  the elements of $A$ are given by:
  \begin{equation}\label{equ_explicit_matrixA}
\begin{aligned}  A_{ij} &= - \frac{1}{2 \Omega _{ii} } f_{ij} V _{ij}, \quad  \text{for all} \ j \in N(i), \ j\neq i, \\
   A_{ii} &= - A_{ij} -  A_{ii_-} - A_{ii_+}   = \sum_{k\in N(i),k\neq i}\frac{1}{2 \Omega _{ii} } f_{ik} V _{ik} =:\textcolor{black}{\frac{1}{2}} \operatorname{div} (V)_{i} .
  \end{aligned}
  \end{equation}
These are explicit representations of $A$ introduced in Lemma~\ref{lemma_matixA} on the simplicial mesh $\mathbb{M}$. 
Here, $\operatorname{div}(V)$ is a standard FV divergence operator on a triangular mesh, see \cite{BauerPHD2013} for instance.

\medskip

\paragraph{Semi-discrete equations.}
In terms of $V_{ij}$, the semidiscrete shallow water momentum equation \eqref{simplicial_scheme} becomes
\begin{equation}\label{equ_swe_discrete_expl}
\partial _t    V_{ij} +  \operatorname{Adv}_{\rm}(V,D)_{ij}= \operatorname{K}_{\rm}( V )_{ij} - \operatorname{G}(D)_{ij},
\end{equation} 
where we defined
  \begin{align*} 
  & \operatorname{Adv}_{\rm }(V,D)_{ij} : =  \\ 
  &   -  \frac{1}{\overline{D}_{ij}  h_{ij} } \Big( \frac{1}{ |\zeta_- | } 
  \sum_{ h _{mn}\in \partial \zeta _-} 
	h_{mn} { (V_{mn}  + \bar R_{mn}) } \Big) \left(   \frac{|\zeta_{-} \cap T_i|}{2 \Omega_{ii}  } \overline{D}_{ji_-}  f_{ii_-}   V_{ii_-} 
				    +  \frac{|\zeta_{-}  \cap T_j|}{2 \Omega_{jj}  } \overline{D}_{ij_-}  f_{jj_-}   V_{jj_-} \right) \\
  & +  \frac{1}{\overline{D}_{ij}  h_{ij} } \Big( \frac{1}{ |\zeta_ + | }\sum_{ h _{mn}\in \partial \zeta _+} 
	h_{mn} { (V_{mn}  + \bar R_{mn}) } \Big) \left(  \frac{|\zeta_{+}  \cap T_i|}{2 \Omega_{ii} }\overline{D}_{ji_+} f_{ii_+}   V_{ii_+}
				    + \frac{|\zeta_{+}  \cap T_j|}{2 \Omega_{jj}   } \overline{D}_{ij_+}   f_{jj_+}   V_{jj_+} \right),
  \end{align*} 
  \begin{align*} 
  \operatorname{K}_{\rm }( V) _{ij} &:= -  \frac{1}{2 h_{ij} }  \Big(  \frac{h_{jj_-} f_{jj_-} (V_{jj_-})^2  }{2\Omega_{jj}}  
								      + \frac{h_{jj_+} f_{jj_+} (V_{jj_+})^2  }{2\Omega_{jj}} 
								      + \frac{h_{ij  } f_{ij  } (V_{ji  })^2  }{2\Omega_{jj}} \\
				  &  \qquad \qquad  \qquad      - \frac{h_{ii_-} f_{ii_-} (V_{ii_-})^2  }{2\Omega_{ii}}
								      - \frac{h_{ii_+} f_{ii_+} (V_{ii_+})^2  }{2\Omega_{ii}} 
								      - \frac{h_{ij  } f_{ij  } (V_{ij  })^2  }{2\Omega_{ii}} \Big)   , \\
  \operatorname{G}(D)_{ij} &:=  \frac{g}{ h_{ij} }  \Big( D_j + B_j - (D_i +  B_i)   \Big)   ,
  \end{align*}
  
  \!\!\!\!
  
  \noindent
  for values $\bar R_{mn}$ related to $ R_{mn}$ by $ R_{ij} = - \frac{1}{2 \Omega _{ii} } f_{ij} \bar R_{ij}$,
  analogously to the relation between $V_{mn}$ and $A_{mn}$ given by \eqref{equ_explicit_matrixA}.

  
  For later use we define the curl-operator and a discrete tangential gradient operator
 \begin{equation}\label{curl_grad} 
  \operatorname{curl} (V)|_{\zeta _\pm} := \frac{1}{ |\zeta_\pm | } \sum_{  h _{mn}\in \partial \zeta _\pm} h_{mn}  V_{mn} \ , 
 \ \  \operatorname{G}^\perp(D)_{ij}:= \frac{1}{f_{ij} } \left( D_-+B_- - (D_+ + B_+)\right),
 \end{equation} 
 where $D_\pm$ is the fluid depth at the node $\pm$, see Fig.~\ref{fig_Notation}.
 Moreover, we define 
 \begin{equation}
  f|_{ \zeta _\pm}:= \frac{1}{ |\zeta_\pm | } \sum_{  h _{mn}\in \partial \zeta _\pm} h_{mn}  \bar R_{mn}.
 \end{equation}
 Later on, we will apply the $f$-plane approximation, i.e., for a Coriolis parameter $f$ (defined further below)
 whose values $f|_{ \zeta _\pm}= f$ are identical on each node. \textcolor{black}{The application of the variational scheme to the rotating shallow water system on the sphere is given in \cite{BrBaBiGBML2018}.}

%

Here, we include an explicit representation of the continuity equation \eqref{simplicial_scheme} by means of $V$ and $D$ to enable comparisons to standard methods. Using the fact that $A_{ii} = - A_{ij} - A_{ii_-} - A_{ii_+}$, there follows
\begin{equation}\label{equ_disc_cont}
\begin{array}{l}
\vspace{0.23cm}\partial _t D _i + A _{ii} D_i-  A _{ij} D_j- A _{ii_-} D_{i_-}- A _{ii_+} D_{i_+}  = 0 ,\\
\partial _t D _i  - A_{ij} (D_i + D_j ) -  A_{ii_-} (D_i + D_{i_-}) -  A_{ii_+} (D_i + D_{i_+})  = 0 ,\\
\partial _t D _i  + \underbrace{\frac{1}{\Omega _{ii}} f _{ij}  V _{ij} \overline{D}_{ij}   + \frac{1}{\Omega _{ii}} f _{ii_-}  V _{ii_-} \overline{D}_{ii_-}
  + \frac{1}{\Omega _{ii}} f _{ii_+}  V _{ii_+} \overline{D}_{ii_+}}_{:= {\rm div} (V, {D})_i}  = 0 ,\\ 
\end{array}
\end{equation}
where $\overline{D}_{ij} = \frac{D_i + D_j }{2}$.
Hence, the continuity equation \eqref{simplicial_scheme} can be written by means of a 
standard FV divergence operator ${\rm div}$ defined on triangles (cf. e.g. \cite{BauerPHD2013}).

\medskip

\paragraph{Time discretization.} The explicit representations \eqref{equ_swe_discrete_expl} and \eqref{equ_disc_cont}
permit one to apply standard time discretization methods, such as Runga-Kutta or Crank-Nicolson schemes while 
  applying operator splitting methods. 
  Here we proceed differently. Since the spatial discretization has been realized by variational principles in a structure preserving way, a temporal variational discretization can be implemented by following the discrete (in time) Euler-Poincar\'e-d'Alembert approach, analogously to what has been done in
 \cite{GaMuPaMaDe2011} and \cite{DeGaGBZe2014}, to which we refer for a detailed treatment. Following \cite{BRMa2009}, the discrete Euler-Poincar\'e approach is based on the introduction of a local approximant to the exponential map of the Lie group, chosen here as the Cayley transform $\tau$. Note that here the Cayley transform is only defined on an open subset of  $\mathfrak{d}(\mathbb{M})$ containing $\mathfrak{d}_{\rm vol}(\mathbb{M})$.

The temporal scheme consists of the following two steps. First, we compute the advected
 quantities, here the fluid density $D$ applying the Cayley transform $\tau$. 
 The update equation is then given by $D^{t+1}= \tau (\Delta t A^t)D^t$
 for the time $t$ and a time step size $\Delta t$. This equation, in particular $\tau$, can be represented as
\begin{equation}\label{equ_D} 
\big(I - \frac{1}{2} \Delta t A^t\big) D^{t+1} = \big(I + \frac{1}{2} \Delta t A^t\big) D^{t},
\end{equation}
with $I$ the identity matrix (cf. \cite{DeGaGBZe2014} for more details).
 The elements of the matrix $A$ in terms of $V_{ij}$ for the simplicial mesh $\mathbb{M}$ 
 are given in \eqref{equ_explicit_matrixA}.  
 
 Second, the following update equations for the momentum equation has to be solved:
 \begin{equation}\label{equ_fulldisc_momentum}
 \begin{aligned}
  \frac{V^{t+1}_{ij} -V^{t}_{ij} }{\Delta t} = &  -\frac{\operatorname{Adv}_{\rm }(V^{t+1},D^{t+1})_{ij} +\operatorname{Adv}_{\rm }(V^{t  },D^{t  })_{ij}}{2}\\
                                               &+\frac{\operatorname{K}_{\rm }(V^{t+1} )_{ij}  +\operatorname{K}_{\rm }(V^{t  } )_{ij}}{2} 
                                                 - \operatorname{G}(D^{t+1})_{ij} ,\;\; \text{for all} \ j \in N(i), \ j \neq i.
 \end{aligned}
 \end{equation}
 We solve this implicit nonlinear momentum equation by fixed point iteration 
 for all edges $ij$. To enhance readability, we skip the corresponding subindices in the following.
 The solving algorithm reads:
  \begin{enumerate}
  \item Start loop over $k=0$ with initial guess at $t$: $ V^{*}_{k=0} = V^{t}$; 
\item Calculate updated velocity $V_{k+1}^{*}$ from the explicit equation:
\[
\textcolor{black}{\frac{V^{*}_{k+1} -V^{t}}{\Delta t}  =-\frac{\operatorname{Adv}_{\rm }(V^*_k,D^{t+1})+\operatorname{Adv}_{\rm }(V^{t  },D^{t  })}{2} +\frac{\operatorname{K}_{\rm }(V^*_k )+\operatorname{K}_{\rm }(V^{t} )}{2}-\operatorname{G}(D^{t+1}),}
\]
then set $k+1 = k$;
  
\item Stop loop over $k$ if $||V^{*}_{k+1} - V^{*}_k|| < \epsilon$ for a small positive $\epsilon$.
\end{enumerate}
In case of convergence $V^{*}_{k+1} \rightarrow V^{t+1}$, this algorithm solves the momentum equation \eqref{equ_fulldisc_momentum}.

\begin{remark}{\rm 
In general, a structure preserving time discretization for the equations of the Euler-Poincar\'e 
type \eqref{discrete_EP_compressible} is obtained by applying a discrete analogue of the variational 
principle \eqref{VP_Euler}. This is the point of view followed in \cite{GaMuPaMaDe2011} and \cite{DeGaGBZe2014}, 
to which we refer for the complete treatment. As explained in these papers, by an appropriate choice of the Cayley 
transform and by dropping cubic terms, it results in a Crank-Nicolson type time update for the momentum 
equation \eqref{discrete_EP_compressible}, i.e., \eqref{discrete_EP_simplicial} for 2D simplicial grids. \textcolor{black}{While in absence of these cubic terms the resulting temporal scheme is no more variational, it was checked that this simplification does not significantly affect the behavior and properties of the solution on the tested configuration.}
We have used above this time update directly on the momentum equation as reformulated in \eqref{simplicial_scheme} 
which slightly differs from what would have been obtained by applying it to the \eqref{discrete_EP_simplicial}. 
This considerably simplifies the solving procedure without altering the behavior of the scheme. \textcolor{black}{We postpone to a future work the treatment of the fully variational time integrator for 2D and 3D compressible fluids.}}
\end{remark}

 \section{Numerical experiments}
 \label{sec_numerical_analysis}

 In this section we evaluate numerically the discrete variational shallow water equations derived in the previous chapters. 
 To this end we investigate whether (i) the scheme conserves stationary solutions such as a lake at rest or a steady isolated 
 vortex, whether (ii) it represents well the nonlinear dynamics, and whether (iii) the scheme 
 approximates well the frequency relations of the continuous equations.
 
 \medskip
 \paragraph{Computational meshes.}
 We perform all simulations in the $f$-plane approximation on a rectangular domain 
 $[0,L_x] \times [0,L_y]$ with $L_x = 5000\,$km and $L_y = 4330\,$km denoting the 
 domain's length in $x$- and $y$-directions, respectively.   
 The $f$-plane approximation corresponds to describing in 
 \eqref{Lagrangian_barotropic} the Earth rotation by the 
 vector potential $ \mathbf{R}  ( \mathbf{x} )=  \frac{1}{2}(-fy , f x) $.
 We apply double periodic 
 boundary conditions and a constant Coriolis parameter of 
 $f = 5.3108$ $\rm days^{-1}$ ($6.147\cdot 10^{-5} \rm s^{-1}$) which corresponds to 
 a latitude of $25^\circ$. 
 To account for the test cases' dimensions and enhance 
 readability, we use in the following units of km and days instead of m and s.

 The simulations are performed on regular and irregular triangular meshes, as shown in 
 Fig.~\ref{fig_meshes}. 
 We denote their resolutions by $N = 2(N_{1D})^2$, the number of triangular cells,
 in which $N_{\rm 1D}$ is the number of subintervals in $x$- or $y$-direction. 
 This number is defined with respect to a regular mesh 
 by $N_{\rm 1D}:  = L_x / f_{\rm uni} =  2 L_y / \sqrt{3} f_{\rm uni}$
 for the uniform triangle edge length $f_{\rm uni} = f_{ij}$ for all edges $ij$.

 The irregular mesh exhibit a refined region in the center of the domain. 
 The mean edge lengths of the cells in the center are about twice as small as of those
 in the outer regions. The refinement procedure starts from a regular mesh and 
 is controlled by a monitor function such that the topology of the mesh is conserved and 
 entanglement of the mesh is avoided (r-adaptivity). For more details on how to construct 
 such irregular meshes, we refer the reader to \cite{Baueretal2014} and references therein.
 
 These irregular meshes mimic realistic situations in operational forecasting, in which locally 
 refined meshes are employed to better resolve local small scale features. 
 It is important that these local refinement areas do not impact on the quality of the 
 global large scale flows. We employ these irregular meshes hence to illustrate that 
 the variational RSW scheme is capable to provide excellent results even on meshes with 
 locally refined areas consisting of cells that are possibly strongly non-regular.  
 
 \begin{figure}[t]\centering
 \begin{tabular}{cc}
  \hspace*{-0.0cm}{\includegraphics[scale=0.40]{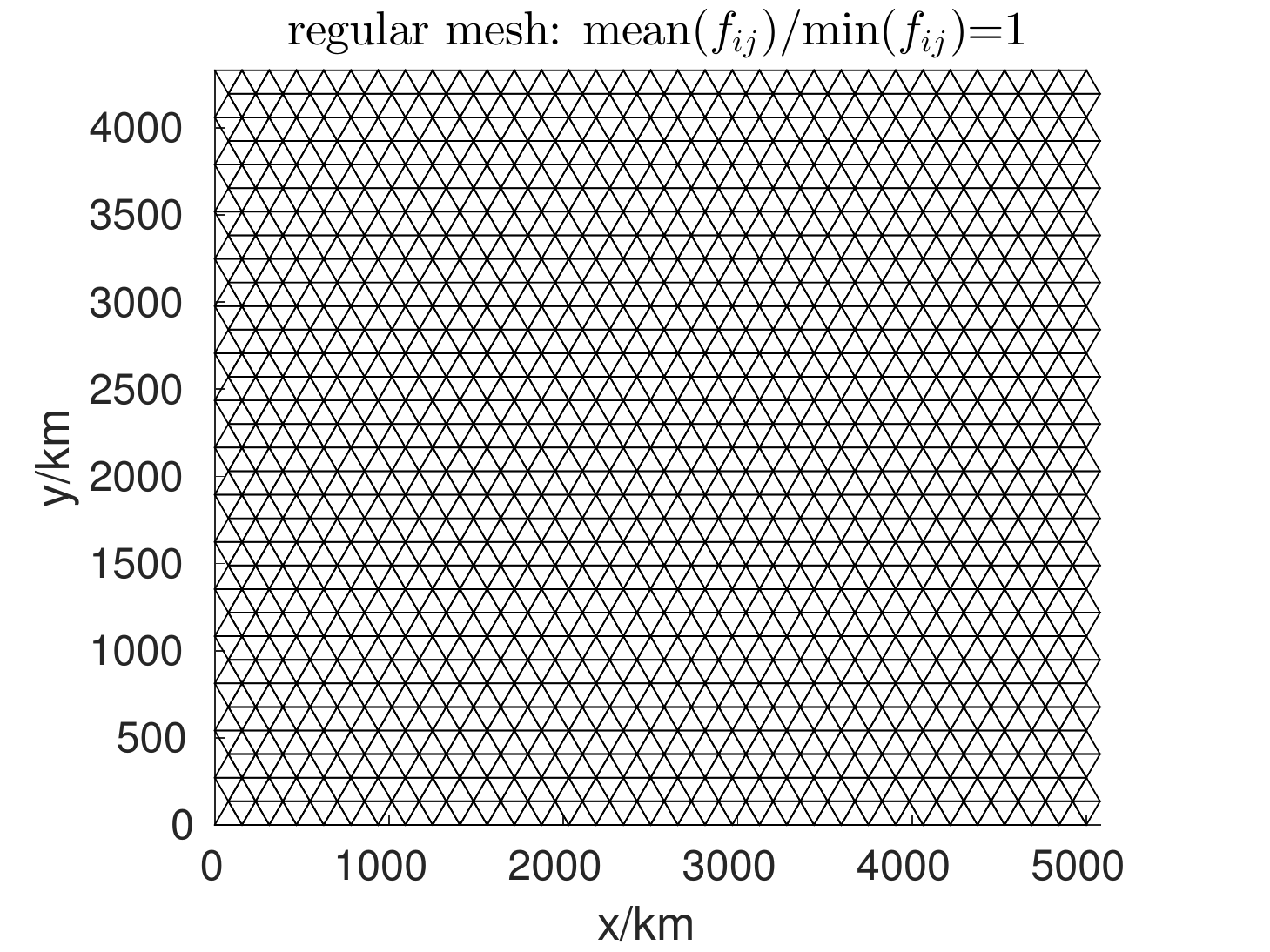}} &
  \hspace*{-0.0cm}{\includegraphics[scale=0.40]{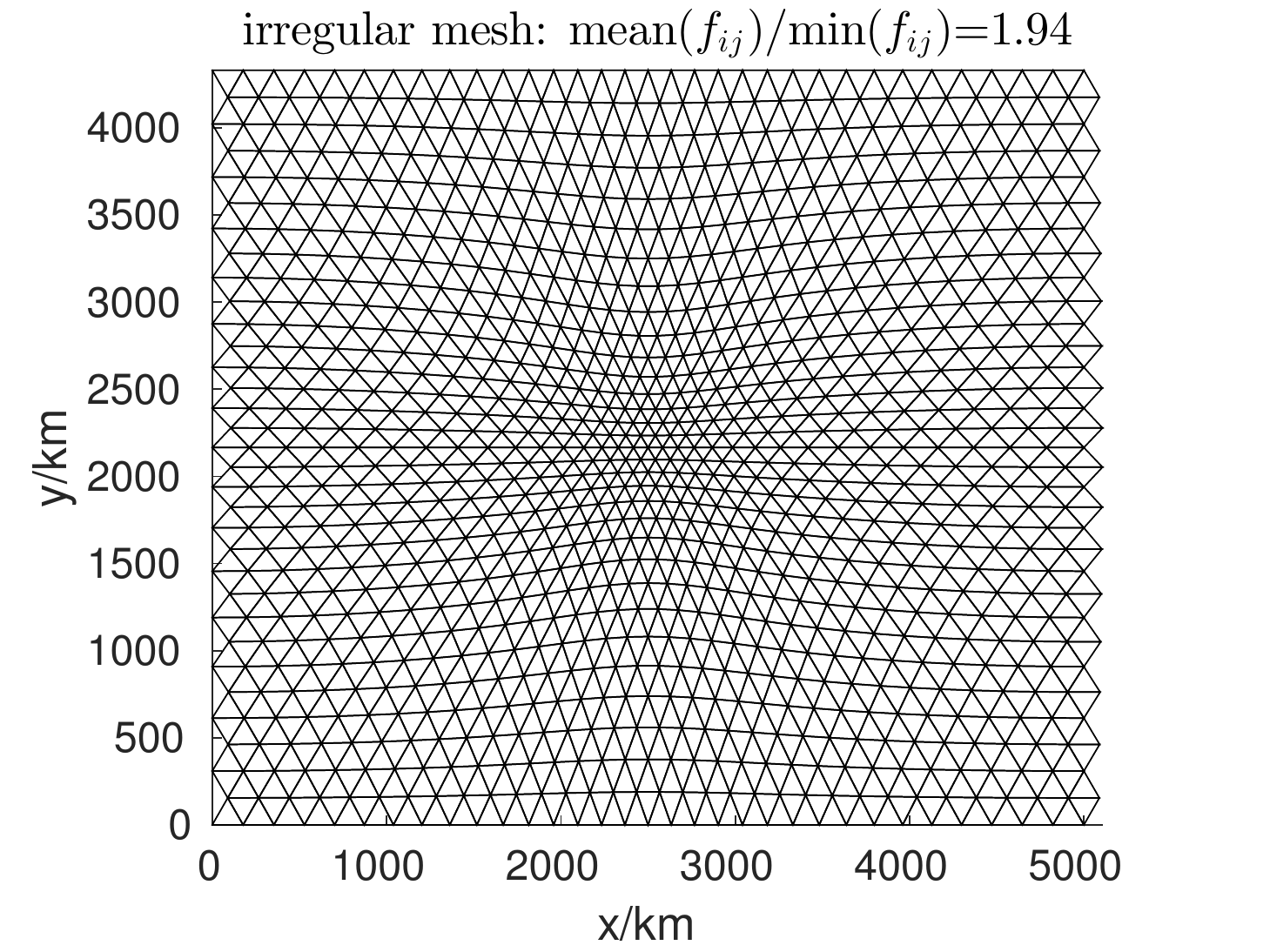}}
 \end{tabular}
 \caption{Regular mesh with equilateral triangles and irregular mesh with central refinement region,
          both with $2 \cdot 32^2$ triangular grid cells.
          }
 \label{fig_meshes}
  \end{figure}

\color{black}
\medskip

 \paragraph{Choice of spatial and temporal resolution.}
 We use test cases that are in the geostro\-phic regime in which the flow is dominated by the geostropic balance. 
 In this context, the Rossby deformation radius $L_D$ \eqref{equ_RandB} describes the length scale at which effects caused by 
 rotation are as important as those by gravity. 
 For the test cases studied, $L_D$ is at the order of $10^{3}\,$km (cf. Section~\ref{sec_tc_single}). 
 Our choice of domain size and spatial resolutions of $2\cdot 32^2$, $2\cdot 64^2$, $2\cdot 128^2$, and $2\cdot 256^2$
 throughout all simulations guarantees that $L_D$ is well resolved and geostrophic effects equally well 
 represented as gravitational ones. 

 Despite Crank Nicolson is an implicit time scheme and is, as such theoretically unconditionally stable, 
 in practice the condition number of the implicit system decreases with larger time steps 
 until the iterative solver fails to converge. This imposes an upper bound on the time step also for implicit schemes. To evaluate the ability of the scheme to handle large  time 
 steps, we use the gravity Courant number (or CFL number)
 \begin{equation}\label{CFLcond}
  C = \sqrt{g H_0} \frac{\Delta t}{\Delta x_{min}} < C_{max} 
 \end{equation}
 with gravitational constant $g = 7.32\cdot10^7\,$ km $\rm days^{-2}$ and water depth 
 $H_0\, [km]$, where $c = \sqrt{g H_0}$ 
 is the speed of the fastest traveling wave,
 $\Delta x_{min}:= \min_{ij}(h_{ij})$ is the shortest dual edge length of the mesh,
 and $\Delta t$ is the time step size. In contrast to explicit schemes with necessarily $C_{max} \leq 1$,
 implicit schemes might reach some multiples of this. 
 For the test cases studied below, our implicit time integrator achieves a maximal 
 Courant number of about $C_{max} = 3$. 
 
 Individually for each test case, we choose one fixed time step $\Delta t$ (unless indicated otherwise).
 It is bounded via \eqref{CFLcond} by $C_{max} = 3$, the largest water depth $H_0$ applied, and $\Delta x_{min}$ 
 of the irregular mesh with highest resolution used. Besides guaranteeing that the iterative solver converges for all meshes applied and all flow regimes studied, the fixed time step 
 per test case permits us to distinguish between error sources related to the spatial and to the 
 temporal discretizations.
 
 \color{black}

 \medskip
 \paragraph{Quantities of interest.}
 For all test cases we are particularly interested in studying the time evolution of the relative errors in the 
 following quantities of interest (QOI): mass, total energy, mass-weighted potential vorticity, and potential enstrophy. 
 As these values are conserved quantities in time of the continuous shallow water equations, we study if the 
 corresponding discrete values are conserved too. These quantities can be calculated as follows. The total 
 mass $m(t)$ follows as an integral of the fluid depth $h({\bf x},t)$ over the domain $M$ 
 and is approximated by
 \begin{equation}
  m= \int_M h \, {\rm d}{\bf x} \; \approx \; \sum_{i=1}^N D_i \Omega_{ii} .
 \end{equation}
 
 The total energy $E_{\rm tot}(t)$ is the sum of kinetic $E_{\rm kin}(t)$ and potential energy $E_{\rm pot}(t)$ 
 which are given, respectively, by
 \begin{equation}
 \begin{aligned}
  E_{\rm kin}& = \int_M \frac{1}{2} h{\bf u}^2  {\rm d}{\bf x}     
            \; \approx \; \sum_{i=1}^N \frac{1}{2} D_i \Omega_{ii}\!\!\!\!\!  \sum_{k\in N(i), k\neq i}\frac{h_{ik  } f_{ik  } V_{ik}^2  }{2\Omega_{ii}} \, , \\
  E_{\rm pot}& = \int_M  \frac{1}{2} g  (h+B)^2  {\rm d}\mathbf{x}  \; \approx \; 
  \sum_{i=1}^N \frac{1}{2}  g (D_i+ B_i)^2\Omega_{ii} \, .
  \end{aligned}
 \end{equation}

 Defining the absolute vorticity $ \omega_a := \operatorname{curl}  \mathbf{u} +f$,
 the potential vorticity $q:= \frac{ \omega_a }{h} $,
 and the relative potential vorticity 
 $q_{\rm rel}:= \frac{\operatorname{curl}  \mathbf{u}}{h}$, 
 the conserved quantities of \textit{mass-weighted potential vorticity} $PV$ and {\em potential enstrophy} 
 $PE$ are given by
 \begin{equation}\label{conserved_quantities} 
 \begin{aligned}
  PV &=  \int_M qh\,  {\rm d}{\bf x}    
            \; \approx \; \sum_{e =1}^{N_e}  \left ( \operatorname{curl}(V)|_{\zeta_e} + f \right)  |\zeta_e|     \, ,\\
  PE &= \frac{1}{2}\int_M q^2 h\,  {\rm d}{\bf x} \; \approx \; \frac{1}{2}\sum_{e =1}^{N_e} \frac{(\operatorname{curl}(V)|_{\zeta_e} + f )^2}{D_e} |\zeta_e| \, ,
 \end{aligned}
 \end{equation}
 using \eqref{curl_grad}, where $N_e$ denotes the total number of nodes.
 The depth $D_e$ associated to dual cells $e$ is obtained by an area 
 weighted average of neighboring cell values $D_i$ using the coefficients $K^e_i$ of \eqref{equ_K}, i.e., 
 $D_e= \sum_{i = 1}^6 K^e_i D_i$. 
 The functions in \eqref{conserved_quantities} are examples of Casimir functions for the rotating 
 shallow water equations, whose general form is $\int_M \Phi (q)h  \,{\rm d} \mathbf{x} $, where $ \Phi $ is an 
 arbitrary function of the potential vorticity.

 \subsection{Well-balancedness and frequency representation}

By means of two test cases, a lake at rest and a lake at rest with small 
disturbance in the surface elevation but trivial bottom topography,
 we investigate whether the variational integrator is capable to preserve stationary
 solutions of the shallow water equations without generating spurious oscillations 
 and without generating or destroying mass.  In particular we study if the scheme 
 is well-balanced with respect to the lake at rest steady state. In addition, we numerically determine the 
 frequency spectrum of occurring surface waves and compare it with the theory.

 \subsubsection{Lake at rest}
 
 This test case serves us to illustrate that our implementation can perfectly 
 handle nontrivial bottom topography. Initializing with constant surface elevation 
 $h + B = \rm const.$ and zero velocity $u = 0$ (cf. \cite{LeVeque}),
 we expect the scheme to conserve this steady solution in case of flat and nontrivial
 bottom topography without exciting spurious modes. We expect further that 
 the QOI are preserved too.

 \medskip
 \paragraph{Initialization.} 
 The setup for the lake at rest consists in a fluid in rest with zero initial velocity, 
 $V_{ij} = 0$ for all edges, and with a surface elevation that coincides with the 
 background depth, here $H_0 = 750\,$m, such that $D_i + B_i = H_0$.
 The function for bottom topography $B({\bf x})$
 describes an underwater island, positioned next to the domain center, that is given by 
  \begin{equation}\label{equ_init_lake}
    B(x,y) =  B' e^{-\frac{1}{2}\big(  \frac{1}{\sigma_x ^2 }(x - x_{c_1}) ^2 
                                   +   \frac{1}{\sigma_y ^2 }(y - y_{c_1})^2  \big)}   \ , 
  \end{equation}
 for $B' = 100\,$m, $x_{c_1} = (\frac{1}{2}-o) L_x$, $y_{c_1} = (\frac{1}{2}-o)L_y$, $o = 0.1$, 
 $\sigma_x = \frac{3}{40}L_x$, and $\sigma_y = \frac{3}{40}L_y$.

 We obtain the discrete function for $B_i$ by sampling \eqref{equ_init_lake} 
 at the centers of the triangles $T_i$. Here and henceforth, we denote the discrete fields such as $B_i$, 
 $D_i$ and $(q_{\rm rel})_i$ with $B({x,y})$, $D({x,y})$, and $q_{\rm rel}({x,y})$,
 respectively, when considering them as 2D fields that depend on the $x$- and $y$-directions
 (cf. Fig.~\ref{fig_D_diff_max}, for instance).

\color{black}
Here and for the frequency test case below, we use a time step of $\Delta t = 60\,$s. 
 According to \eqref{CFLcond} with $\Delta x_{min}= 5.183\,$km 
 of the irregular mesh with $2\cdot 64^2$ cells and $H_0 = 750\,$m, the Courant number is $C = 0.99$
 which is below $C_{max} =3$. 
 \color{black}
%
%

 \begin{figure}[t]\centering
 \begin{minipage}{0.45\linewidth}
 \begin{tabular}{c}
  \hspace*{-0cm}{\includegraphics[scale=0.5]{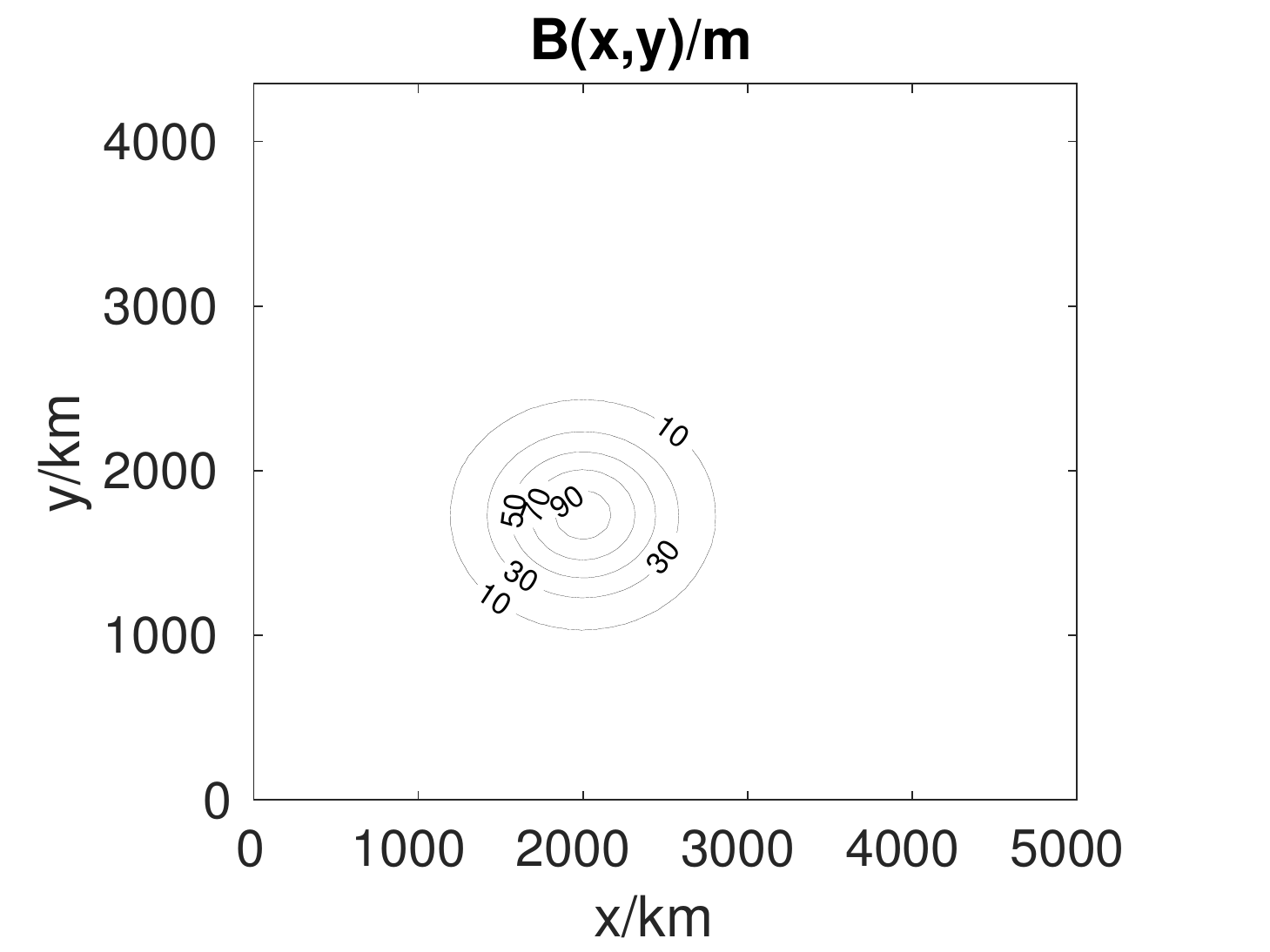}} 
 \end{tabular}
 \end{minipage}\hfill
 \begin{minipage}{0.55\linewidth}
   \begin{tabular}{c}
  \hspace*{-0cm}{\includegraphics[scale=0.25]{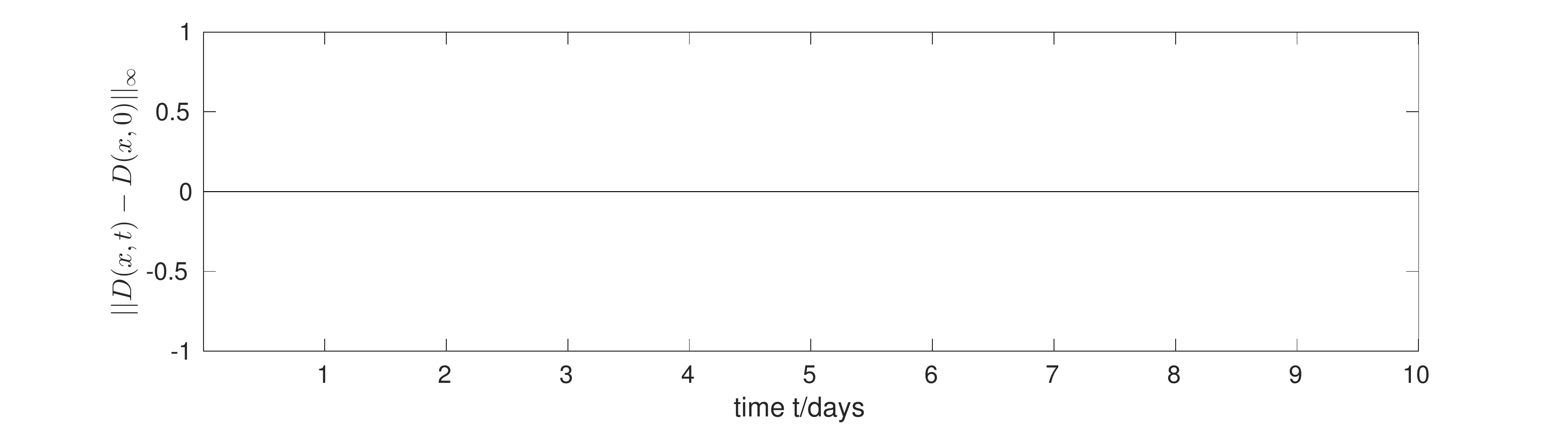}} \\
  \hspace*{-0cm}{\includegraphics[scale=0.25]{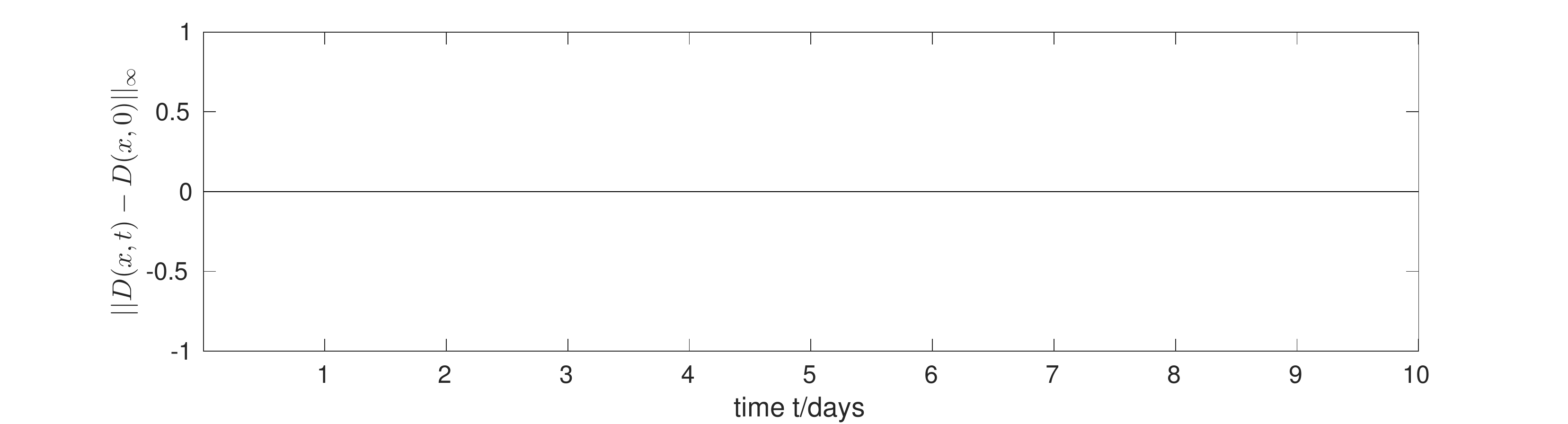}} 
 \end{tabular}
 \end{minipage} 
 \caption{Left: contour lines of the bottom topography $B(x,y)$ on the computational domain. 
          Right: maximum errors in surface elevation at rest relative to $H_0 = 750\,$m for 
          regular (upper right) and irregular (lower right) meshes.}  
   \label{fig_D_diff_max}
  \end{figure}

 \bigskip 

 \paragraph{Results.}
 We consider the maximum error over all cells $i$ of the surface elevation $D_i(t) + B_i$ minus 
 the initial value $D_i(0) + B_i = H_0$, denoted by $|| D(t) - D(0) ||_{\infty}$
 when solving the nonlinear equations \eqref{equ_D}-\eqref{equ_fulldisc_momentum}.
 The corresponding error values for regular and irregular meshes are shown in the upper-right and 
 lower-right panels of Fig.~\ref{fig_D_diff_max}, respectively. Clearly, even in case of a 
 nontrivial bottom topography (Fig.~\ref{fig_D_diff_max}, left), the surface elevation is preserved at
 machine precision for both mesh types.
 In addition, all QOI are preserved at machine precision too (not shown).
 This allows us to conclude that the scheme perfectly satisfy the well-balanced property and 
 does not generate spurious modes in case of nontrivial bottom topography.

\subsubsection{Frequency spectrum of linearized shallow water equations}

 Here we check if the occurring wave frequencies agree with the theory. We assume trivial bottom topography. Linearizing the equations around the undisturbed fluid depth $H_0$ and zero velocity and inserting 
 plane wave solutions of the form $h(x,y,t) = H_0 \exp(i(kx + ly - \omega t))$, there follow the solutions
 \begin{equation}\label{equ_disp_rel_swe}
  \omega= 0 \qquad \text{or} \qquad \omega^2 = f^2 + c^2(k^2 + l^2)
 \end{equation}
 for $c = \sqrt{gH_0}$, wave frequency $\omega$, and wave numbers $k,l$ in $x,y$-direction, respectively. 
 From this relation we thus have either a stationary solution $(\omega =0)$ or waves with frequencies greater 
 than the Coriolis frequency $f$, i.e., $ \omega \geq f$. The case $ \omega  = f$, i.e., $k=l=0$, corresponds 
 to \textit{inertial oscillations} which do not propagate.  
 Because of the double periodic boundary conditions, these waves are not excited here. 
 The case $\omega >f$, corresponds to inertia-gravity (or Poincar\'e) waves, cf. \cite{Zeitlin2007}. 
 Since we have a bounded, double periodic computational domain, the permitted wave numbers for inertia-gravity 
 waves are $k = \frac{n_x 2\pi}{L_x}$ and $l = \frac{n_y 2\pi}{L_y}$ for $n_x,n_y = 0,1,2,\dots$ with $n_x + n_y > 0$. 
 This gives a minimum wavenumber in each direction. Hence, all frequencies are greater than $f$ but there is no maximal wavenumber.

 \bigskip

 \paragraph{Initialization of surface disturbance.}
 To obtain an initialization that is close to the values $\bar h(\mathbf{x},t) = H_0$ and 
 $\mathbf{\bar u}(\mathbf{x},t)=0$ around which we linearized the shallow water equations, we superimpose 
 on the lake at rest a small disturbance of magnitude $H' = 7.5\,$m in the surface elevation.
 Hence, we apply the fluid depth
 \begin{equation}\label{equ_init_single_vortex_h}
  h(x,y,0) =  H_0 - H' \left[   e^{-\frac{1}{2}({x_1'}^2 + {y_1'}^2  )}  
  - \frac{4\pi \sigma_x \sigma_y}{L_x L_y}     \right] \, , 
 \end{equation}
 using the periodic extensions
 \begin{equation}\label{equ_define_center}
 \begin{split}
  x'_{1} = \frac{L_x}{\pi \sigma_x}\sin \left( \frac{\pi}{L_x}(x - x_{c_{1}}) \right)\, , \quad 
     y'_{1} = \frac{L_y}{\pi \sigma_y}\sin \left( \frac{\pi}{L_y}(y - y_{c_{1}}) \right)\, .
 \end{split}
 \end{equation}
 The center of the perturbation is positioned at $x_{c_{1}} = \frac{1}{2}L_x, y_{c_{1}} = \frac{1}{2} L_y$.
 To obtain a circular initial (negative) surface evaluation, we use only one value for sigma, 
 i.e. $\sigma_x = \sigma_y =  \frac{3}{40}L_y$. 
 Note that in terms of implementation, we initialize the fluid depth $D_i$ by 
 sampling \eqref{equ_init_single_vortex_h} at each cell center 
 and we set all velocity values $V_{ij}$ to zero. 
 We set all $B_i =0$ to apply trivial bottom topography.
 
\color{black}
Using the same time step $\Delta t = 60\,$s and the same meshes (i.e. $\Delta x_{min} = 5.183\,$km) as for the lake at rest,
 the Courant number for case (i) with $H_0 = 750\,$m is again $C = 0.99$ but for case (ii) with $H_0= 1267.5\,$m it is $C = 1.29$, both well below $C_{max}$. 
 \color{black}

%

 \bigskip
 
 \paragraph{Results of the frequency spectrum study.}
 Recall that we use a gravitational constant of $g = 7.32\cdot10^7\,$ km $\rm days^{-2}$
 and a double periodic domain with wave numbers $k = \frac{n_x 2\pi}{L_x}$ 
 and $l = \frac{n_y 2\pi}{L_y}$ for $n_x,n_y = 0,1,2,\dots$. 
 According to the dispersion relation in Eqn.~\eqref{equ_disp_rel_swe} (right),
 we find for two sets of parameters, namely 
 case (i) $f = 5.31\,$ $\rm days^{-1}$, $H_0 = 750\,$m, and 
 case (ii) $f= 6.903\,$ $\rm days^{-1}$, $H_0= 1267.5\,$m, 
 the following frequencies $ \omega (n_x,n_y)$ in units of $\rm rad \ days^{-1}$ 
 for some combinations of $n_x$ and $n_y$: 

 \medskip
 \noindent
 \begin{small}
 \begin{tabular}{c|ccccccccc}
                  & \hspace{-0.5em} ${ \omega}(0,0)$ & \hspace{-0.5em} ${ \omega}(1,0)$ & \hspace{-0.5em} ${ \omega}(0,1)$ & \hspace{-0.5em} ${ \omega}(1,1)$ 
                  & \hspace{-0.5em} ${ \omega}(2,0)$ & \hspace{-0.5em} ${ \omega}(2,1)$ & \hspace{-0.5em} ${ \omega}(0,2)$ & \hspace{-0.5em} ${ \omega}(1,2)$ & \hspace{-0.5em} ${ \omega}(2,2)$ \\ \hline   
  case (i) &  5.3          &   10.7        &  12.0         &   15.2        & 19.4          & 22.1          & 22.2          & 24.0         & 28.9         \\ \hline 
  case (ii) &  6.9          &   13.9        &  15.6         &   19.7        & 25.2          & 28.8          & 28.8          & 31.2         & 37.6         \\ \hline   
  \end{tabular}
  \end{small}

  \medskip
  \noindent
  The parameters for $f$ and $H_0$ have been chosen such that the flow 
  remains for both case (i) and case (ii) in the quasi-geostrophic regime 
  (i.e. ${\rm Bu} \approx 1$, cf. \eqref{equ_RandB}).
  
  To verify if these theoretical values are well represented by the variational shallow water 
  scheme, we determine the frequencies occurring during the simulations.
  To this end, we numerically calculate at the center of the domain 
  the Fourier transforms of a time series of the fluid depth 
  $D(x,z,t)$ for the time interval $t \in [0,10\,{\rm days}]$ with a sample frequency of $0.01\,$days. 
  The resulting spectra for the two choices of parameters are shown in 
  Fig.~\ref{fig_freq_singlevortex_irreg_center},
  left for case (i) and right for case (ii).
  
  Besides some small background noise of waves occupying all possible wave numbers,
  we clearly distinguishes sharp peaks in the spectra exactly at the predicted wave 
  numbers for both parameter sets. 
  For the illustrated combinations of $k,l\leq2$, this perfect match of expected
  and numerically determined values can easily be seen when comparing the values from the 
  table with those from Fig.~\ref{fig_freq_singlevortex_irreg_center}, while for combinations with 
  larger wave numbers the associated peaks might overlap (not shown). The overlap of 
  the frequencies ${ \omega}(2,1)$ and ${ \omega}(0,2)$ reflects itself in a nearly doubled magnitude
  of the associated peak. Neither at case (i) nor (ii) we observe unphysical solutions 
  at the frequencies 
  $f = 5.31\,$ $\rm days^{-1}$ or $f= 6.903\,$ $\rm days^{-1}$, respectively. 
  In addition, we notice the dependency of the spectra 
  on the parameter $f$ when comparing the left and the right spectra. In the latter, the 
  peaks are shifted slightly to higher wave numbers because of the greater Coriolis 
  parameter, in agreement with \eqref{equ_disp_rel_swe}.

 \begin{figure}[t!]\centering
 \begin{tabular}{cc}
    \hspace*{-0cm}{\includegraphics[scale=0.45]{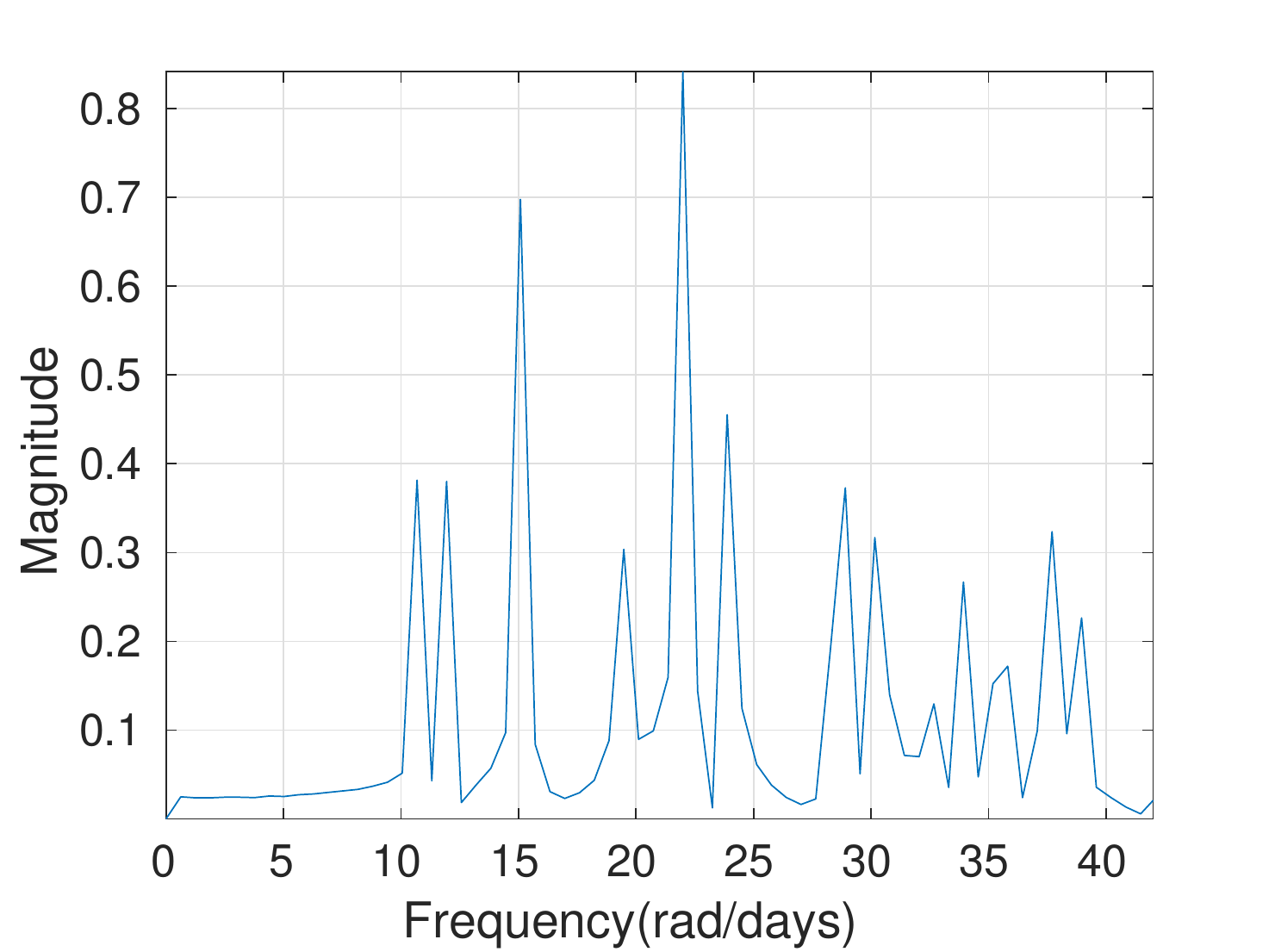}}&
    \hspace*{-0cm}{\includegraphics[scale=0.45]{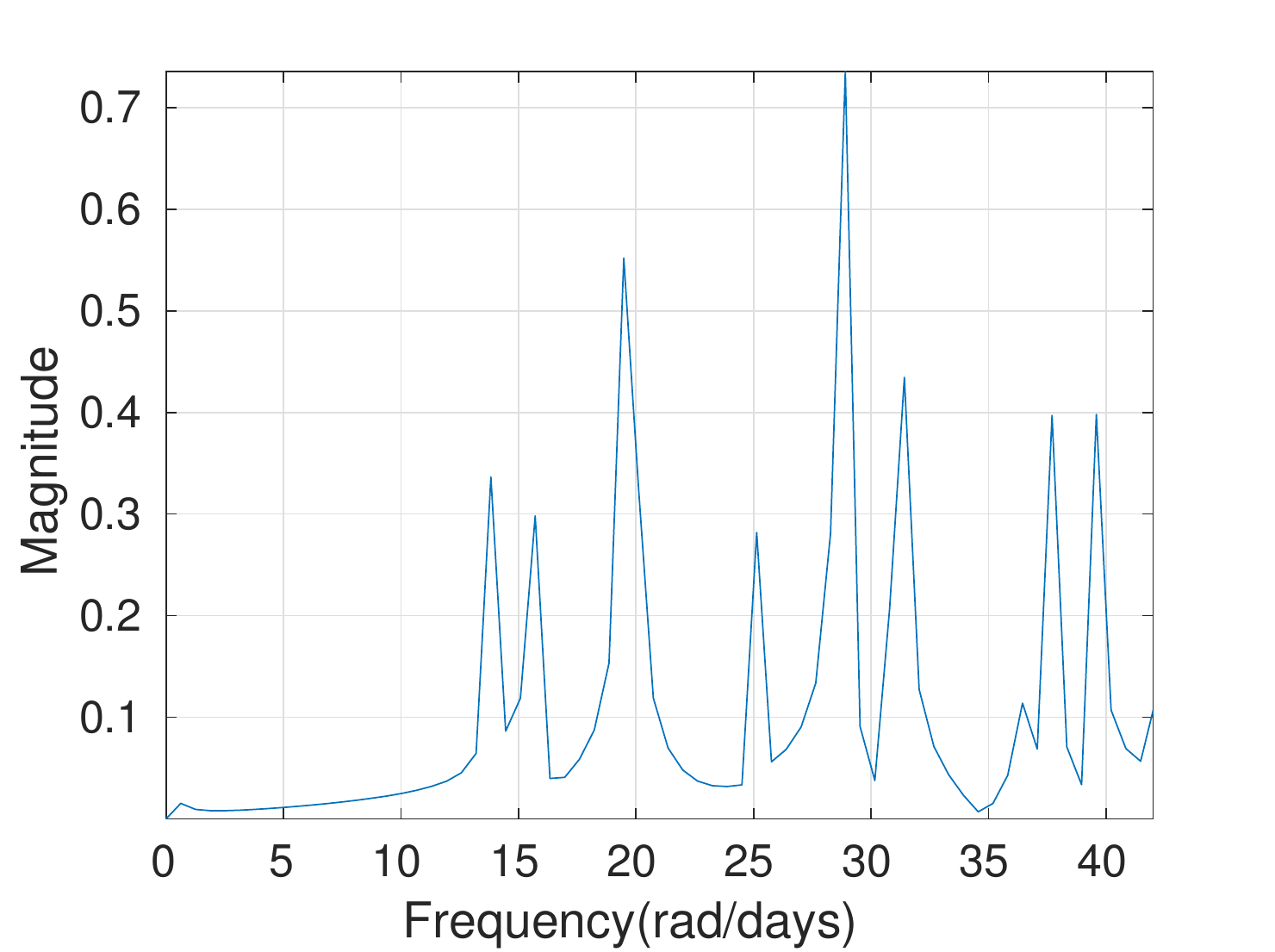}}  
  \end{tabular}
 \caption{Frequency spectra of the disturbed lake at rest after 10 days 
         for parameters 
         $f = 5.31\,$ $\rm days^{-1}$, $H_0 = 750\,$m (left) or 
               $f= 6.903\,$ $\rm days^{-1}$, $H_0= 1267.5\,$m (right)
         determined on an irregular mesh with $2 \cdot 64^2$ cells. 
  The frequency spectra determined on regular meshes looks very similar (not shown).}                                                                                             
  \label{fig_freq_singlevortex_irreg_center}
 \end{figure}


 \subsection{Conservation of exact, steady solution of an isolated vortex}
 \label{sec_tc_single}

 Here we test if the variational shallow water scheme preserves the stationary solution 
 of an isolated vortex. We perform long-term simulations 
 up to 100 days and evaluate alongside the conservation properties of 
 mass, total energy, mass-weighted potential vorticity, and potential enstrophy.
 For the \textcolor{black}{long-term} simulation we apply either regular or irregular computational meshes 
 with only $2 \cdot 64^2$ triangular cells because potential instabilities usually occur earlier
 for coarser mesh resolutions.
 
  
 Comparing the numerical solutions at day 1, for instance, with the initial state allows us further
 to determine the solutions spatial convergence behavior since, being stationary, every deviation 
 is due to numerical errors.
 
 \bigskip

 \paragraph{Initialization.}  
 A stationary vortex solution of the rotating shallow water equations  with trivial bottom topography has the velocity $ \mathbf{u} ( \mathbf{x} )$ and 
 fluid depth $h( \mathbf{x} )$ given by
 \begin{equation}\label{equ_vel_calc}
 \mathbf{u} ( \mathbf{x} )= V(r) \frac{(-y, x)}{r} \quad\text{and}\quad h( \mathbf{x} ) = H(r),
 \quad r= \sqrt{ {x_1'}^2 + {y_1'} ^2 },
 \end{equation}
 where $V(r)$ and $H(r)$ verify the gradient wind balance
 \begin{equation}\label{relation_V_h1}
  \frac{V(r)^2}{r}+ f V(r)   = g \frac{\partial H(r)}{\partial r} ,
 \end{equation} 
 (cf.~\cite{StDr2000}, for instance). This condition 
 complies with the construction method for steady state solutions of the 
 RSW equations suggested by \cite{Staniford_White2007}. 
 Here, we apply relation \eqref{relation_V_h1} to construct a stationary solutions 
 and consider a test case with trivial bottom topography $B({\bf x}) = 0$.

 We consider the following radial function to describe the velocity (resp. streamfunction):
 \begin{equation}\label{equ_steady_V}
   V(r)= u _{0} \frac{r}{r_0} e^{- \frac{1}{2}(\frac{r}{r_0})^2}, \quad \text{resp.} \quad
   \Psi(r)= - u _{0} {r_0} e^{- \frac{1}{2}(\frac{r}{r_0})^2}.
 \end{equation}
 The function $V(r)$ results from choosing the exponential vorticity profile suggested by \cite{StDr2000} for $\alpha =2$ 
 combined with the geophysically relevant scaling discussed in \cite{Staniford_White2007}.
 Its integration with respect to $r$ gives the streamfunction $\Psi(r)$. 
 The corresponding radial function for the fluid depth, which follows from \eqref{relation_V_h1},
 reads
 \begin{equation}\label{equ_steady_H}
  H(r) = H_0 - \frac{u _{0} ^2 }{2g}e^{- (\frac{r}{r_0})^2 }- \frac{f u _{0} r_0}{g} e^{- \frac{1}{2}(\frac{r}{r_0})^2 },
 \end{equation}
 where $u_0$ describes the maximal velocity and $r_0$ is a scaling constant, both to be determined further below. 
 
 \begin{remark}\rm
   Equations \eqref{equ_steady_V} and \eqref{equ_steady_H} propose an exact, stationary 
   solution of the rotating shallow water equations \eqref{one_form_RSW} 
   on the plane for trivial bottom topography.  
   Topography can be easily included, as this steady 
   solution is consistent with the construction method of \cite{Staniford_White2007} which allows for such modification. 
   Moreover, our steady solution is an alternative example to that suggested by \cite{Staniford_White2007} of a steady isolated 
   vortex. It provides a Cartesian analog for the famous 
   test case 2 of \cite{Williamson1992} of a steady solution of the 
   shallow water equations in spherical geometry which is frequently applied to 
   measure a scheme's ability to preserve large-scale geostrophic balance 
   (see \cite{Ringler2010}, for instance).
 \end{remark}

 To position the vortex in the center of the domain, we use the definitions
 $x'_{1} = \left( x - x_{c_{1}} \right)$ and
 $y'_{1} = \left( y - y_{c_{1}} \right)$ for 
 $x_{c_{1}} = \frac{1}{2}L_x, y_{c_{1}} = \frac{1}{2} L_y$, similarly to 
 \eqref{equ_define_center} but omitting the periodic extension. 
 We take a sufficiently large domain and a corresponding scaling parameter $r_0$ 
 so that the fluid is at rest at the boundaries.
 We initialize the fluid depth $D_i$ by sampling \eqref{equ_steady_H} at each cell center.
 For the velocity, we have two options 
 to map the analytical initial conditions to the mesh. 
 We either sample (i) the velocity field $\bf u$ of \eqref{equ_vel_calc} and (\ref{equ_steady_V}, left)
 at each triangle edge midpoint before we project it onto the edge's normal direction to obtain $V_{ij}$. 
 Alternatively, we sample (ii) the streamfunction $\Psi$ of (\ref{equ_steady_V}, right)
 at the triangles' vertices and calculate the normal velocities as 
 $V_{ij} = {\bf k} \times \operatorname{G}^\perp(\Psi)_{ij}$
 for ${\bf k} = (0,0,1)$, using the tangential gradient operator \eqref{curl_grad}. 
 Both options lead to very similar results, in particular when comparing the
 numerical solutions visually. However, a comparison in terms of $L_2$ and $L_\infty$ error norms 
 reveals that initialization (i) leads to slightly smaller error values, 
 in particular on coarse meshes (not shown). 
 Hence, in the following we only present results obtained using the velocities initialization 
 (\ref{equ_steady_V}, left).

 \bigskip
 
 \paragraph{Parameter choice and flow regimes.}
 We consider a set of dimensionless parameters to characterise the flow resulting from 
 \eqref{equ_steady_V} and \eqref{equ_steady_H}. The characteristic velocity is described by
 \begin{equation}
  U = 2\frac{g H'}{fd}\, ,
 \end{equation}
 with characteristic length scale $d = 4 r_0$  and $H'$ as maximal deviation of the surface 
 elevation from the background depth $H_0$. We consider further 
 the Rossby number $\rm Ro$, Froude number $\rm Fr$, and Burger number $\rm Bu$:
 \begin{equation}\label{equ_RandB}
  {\rm Ro} = \frac{U}{fd} = 2\frac{gH'}{f^2 d^2}, \quad {\rm Fr} = \frac{U}{\sqrt{gH_0}} ,  
  \quad {\rm Bu} = \frac{{\rm Ro}^2}{{\rm Fr}^2}= \frac{L_D^2}{d^2}  =  \frac{g H_0}{f^2 d^2}, 
 \end{equation}
 with Rossby deformation radius $L_D = \frac{\sqrt{g H_0}}{f}$.
 
 For this study, we want to consider fluids in geostrophic regime in which the flow is dominated by the geostrophic balance. 
 This requires ${\rm Ro} \ll 1$. The geostrophic regime can further be classified in: 
 (i) semi-geostrophic regime for ${\rm Bu} \ll 1$, (ii) quasi-geostrophic regime for ${\rm Bu} \approx 1$,
 and (iii) incompressible regime for ${\rm Bu} \gg 1$ (cf. \cite{Cullen2006,Pedlosky_GFL1979}, for instance).
 Because ${\rm Fr}$ describes the stratification of the fluid -- with strong stratification in case of small ${\rm Fr}$
 -- the choice of ${\rm Bu}$ allows us to describe shallow water flows with different degree of 
 compressibility: with (i) and (ii) for compressible and (iii) for almost divergence free flows.
 As suggested by \cite{RestelliHundermark2009} for the vortex pair interaction, 
 we fix $H'= 75\,$m which gives a Rossy number of ${\rm Ro} \approx 0.199$. Then, the choice of the 
 background depth $H_0$ allows us to model flows in the different geostrophic regimes:
 (i) $H_0 = 450\,$m for semi-geostrophic \textcolor{black}{($L_D = 1080\,$km)}, (ii) $H_0 = 750\,$m for quasi-geostrophic \textcolor{black}{($L_D = 1400\,$km)}, and (iii) $H_0 = 10\,$km for incompressible flows \textcolor{black}{($L_D = 5100\,$km)}.

 We study both the isolated vortex test case and the dual vortex interaction of Sect.~\ref{sec_tc_dual}
 in these three different flow regimes. In particular, we apply the same characteristic values as suggested by 
 \cite{RestelliHundermark2009}, which allows us to compare qualitatively as well as quantitatively the different 
 numerical solutions. Hence, we assume for both test cases the same characteristic length of $d = 4r_0 = 4 \sigma$ 
 with $\sigma = \frac{1}{2}(\sigma_x + \sigma_y)$ for 
 $\sigma_x = \frac{3}{40}L_x$ and $\sigma_y = \frac{3}{40}L_y$ (cf. \eqref{equ_dualvortexoffset}).
 As maximum velocity we use the characteristic velocity, i.e. we use $u_0 = U = 2\frac{g H'}{fd}$.
 Given this parameter choice, the isolated vortex is stable, cf. \cite{StDr2000}.

\textcolor{black}{To assure the convergence of the iterative solver (i.e. $C \leq 3$), we use a time step of $\Delta t = 48\,$s for the long-term simulations, 
 which yields the Courant number $C = 2.90$ for $\Delta x_{min} = 5.183\,$km of the irregular $2\cdot 64^2$ mesh 
 and $H_0= 10\,$km, the largest water depth studied.
 Because we use irregular meshes with resolutions up to $2 \cdot 256^2$ cells 
 with $\Delta x_{min} = 1.313\,$km, the time step for the convergence study with water depth up to $H_0= 10\,$km is 
 only $\Delta t = 12\,$s, which corresponds to a Courant number of $C = 2.86$. 
 }
 

 \medskip

 \paragraph{Results \textcolor{black}{of the long-term simulations}.} 
 Being a stationary solution of the rotating shallow water equations, we expect the 
 variational integrator to exactly preserve the initial distributions of fluid depth 
 $D$ and relative potential vorticity $q_{\rm rel}$ of the isolated vortex even for 
 long integration times. 
 Here and consistently throughout the manuscript, we illustrate the quantity $q_{\rm rel}$ 
 rather than, e.g., the absolute vorticity $\omega_a$ or the conserved potential vorticity $q$.
 This is because $q_{\rm rel}$ highlights the positive and negative regions of 
 the vorticity distribution and it allows us to compare further below our results with those obtained 
 by \cite{RestelliHundermark2009}.

 As it can be inferred from Fig.~\ref{fig_dyn_D_singlevortex} and \ref{fig_dyn_Z_singlevortex} 
 in which we compare solutions after 100 days of integration on a 
 regular (middle) and an irregular (right) mesh with the initial conditions (left),
 our variational scheme performs very well because it
 preserves in fact both fields for both mesh types very well 
 without generation spurious modes. In particular for the regular mesh, 
 the position, extend, and magnitude of the vortex at initial and end states very much agree 
 while we realize for the irregular case a slight oval shape of the initially round vortex 
 in both $D$ and $q_{\rm rel}$. This deviation is due to numerical errors that are 
 caused by strongly deformed mesh cells, which is particularly apparent on coarse mesh 
 resolution as here for a grid with only $2\cdot 64^2$ cells.
 However, as it can be inferred from the convergence study below 
 (cf. Fig.~\ref{fig_isolated_vtx_convergence}), this deviation reduces with, at least, 
 $1^{st}$-order with increasing mesh resolution.
 
 \textcolor{black}{Despite the isolated vortex is a steady state solution of the RWS equations, 
 in which any quantity is conserved because of no time dependence, for the numerical
 scheme such solutions are stationary only up to numerical errors. 
 As such, it is interesting to discuss also here the conservation properties of the QOI.} 
 In Fig.~\ref{fig_Z_single_Hall_qoi} we show the time evolutions of the relative errors
 (determined as ratio of current values at time $t$
 over initial value at $t = 0$) of the total energy $E$, determined on a 
 mesh with $2\cdot 64^2$ cells (upper row) and with $2\cdot 32^2$ cells (lower row), 
 for fluids in semi-geostrophic (first and second column), in quasi-geostrophic (third and forth column),
 and in incompressible (fifth and sixth column) regimes. 
 As above, we compare results for 
 regular (first, third, fifth column) with irregular (second, forth, sixth column) meshes.
 Here, and for all other cases studied, mass $m$ is preserved up 
 to machine precision (not shown). 
 
 Comparing the three different flow regimes, we note that the errors in total energy 
 depend on the fluid depth, with smallest values of $10^{-10}$ in the incompressible case. 
 In the other regimes that allow compressibility, total energy is  also very well preserved
 at the order of $10^{-8}$. 
 \textcolor{black}{All energy error values relate to the time step $\Delta t = 48\,$s and reduce further at $1^{\rm st}$-order convergence with smaller $\Delta t$.}
 But they are more or less independent from the spatial resolution (cf. Fig.~\ref{fig_Z_single_Hall_qoi}).
 The indicated marginal trends of loss in total energy, visible in the energy plots of the 
 irregular mesh cases, diminish with higher spatial resolution for all three flow regimes
 (compare lower and upper rows of Fig.~\ref{fig_Z_single_Hall_qoi}). 

 \textcolor{black}{The relative errors in $PE$ show a similar dependency on the flow regime as the 
 energy errors (hence not shown). Compared to the values presented in Fig.~\ref{fig_Z_single_Hall_qoi} for $\Delta t = 48\,$s, 
 the $PE$ error values are about two orders of magnitude larger while they are more or less independent from the time
 step size. $PV$ is conserved at machine precision for all cases studied (analogously to the time series presented further below for the 
 nonstationary cases).
 }
 
%
 \begin{figure}[t!]\centering
 \begin{tabular}{ccc}
  \hspace*{-0.3cm}{\includegraphics[scale=0.38]{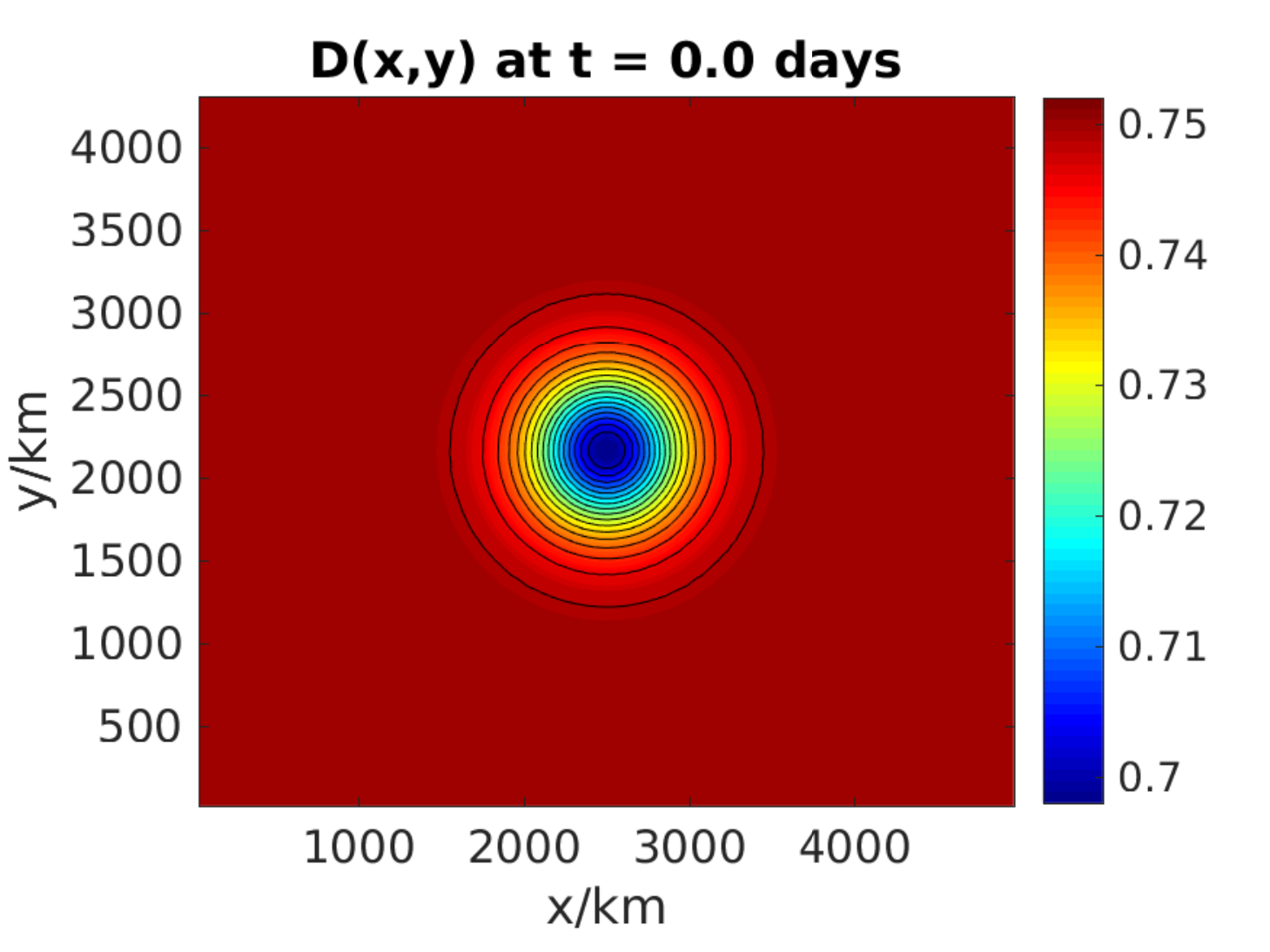}} &
  \hspace*{-0.5cm}{\includegraphics[scale=0.38]{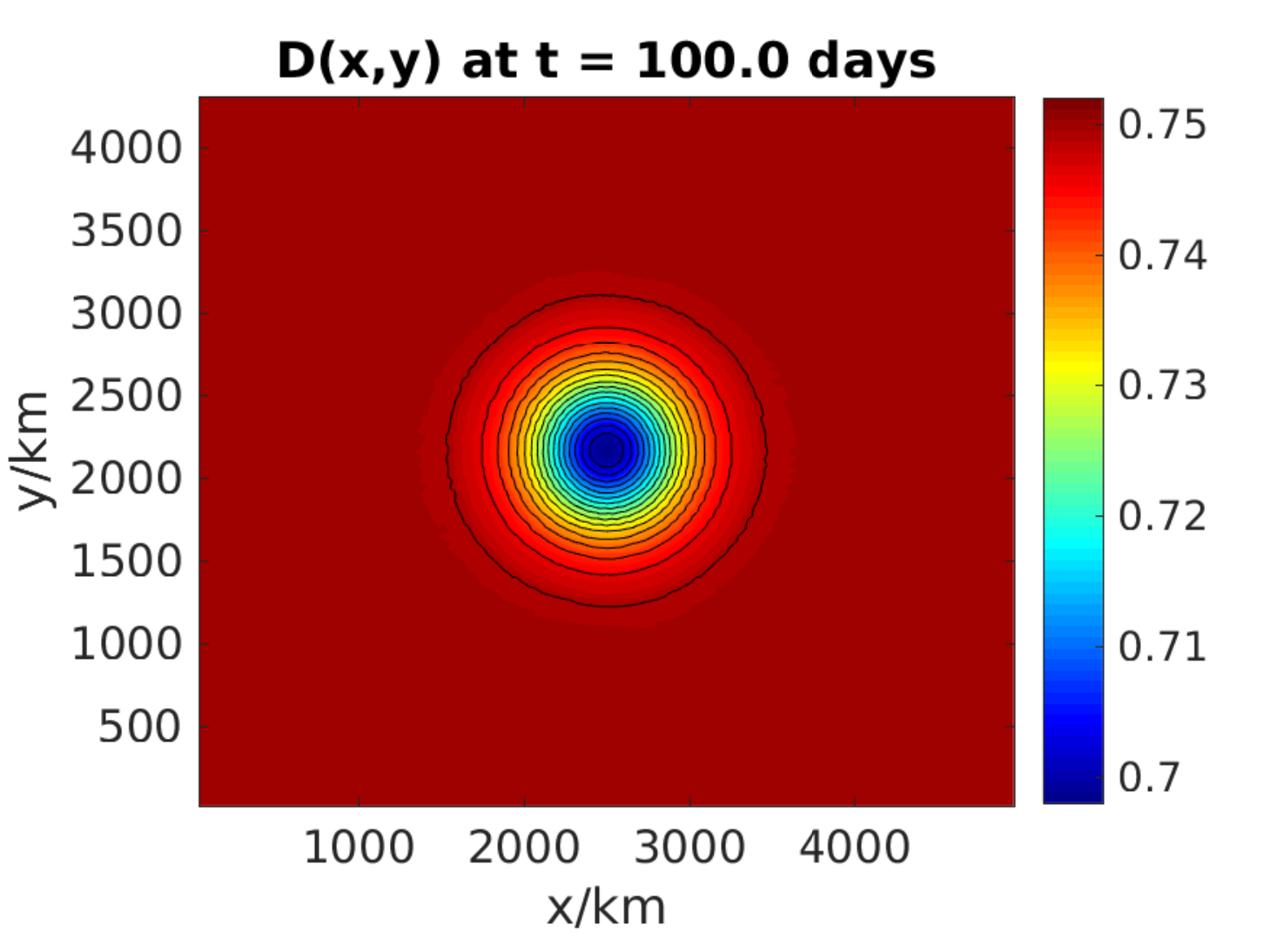}} & 
  \hspace*{-0.5cm}{\includegraphics[scale=0.38]{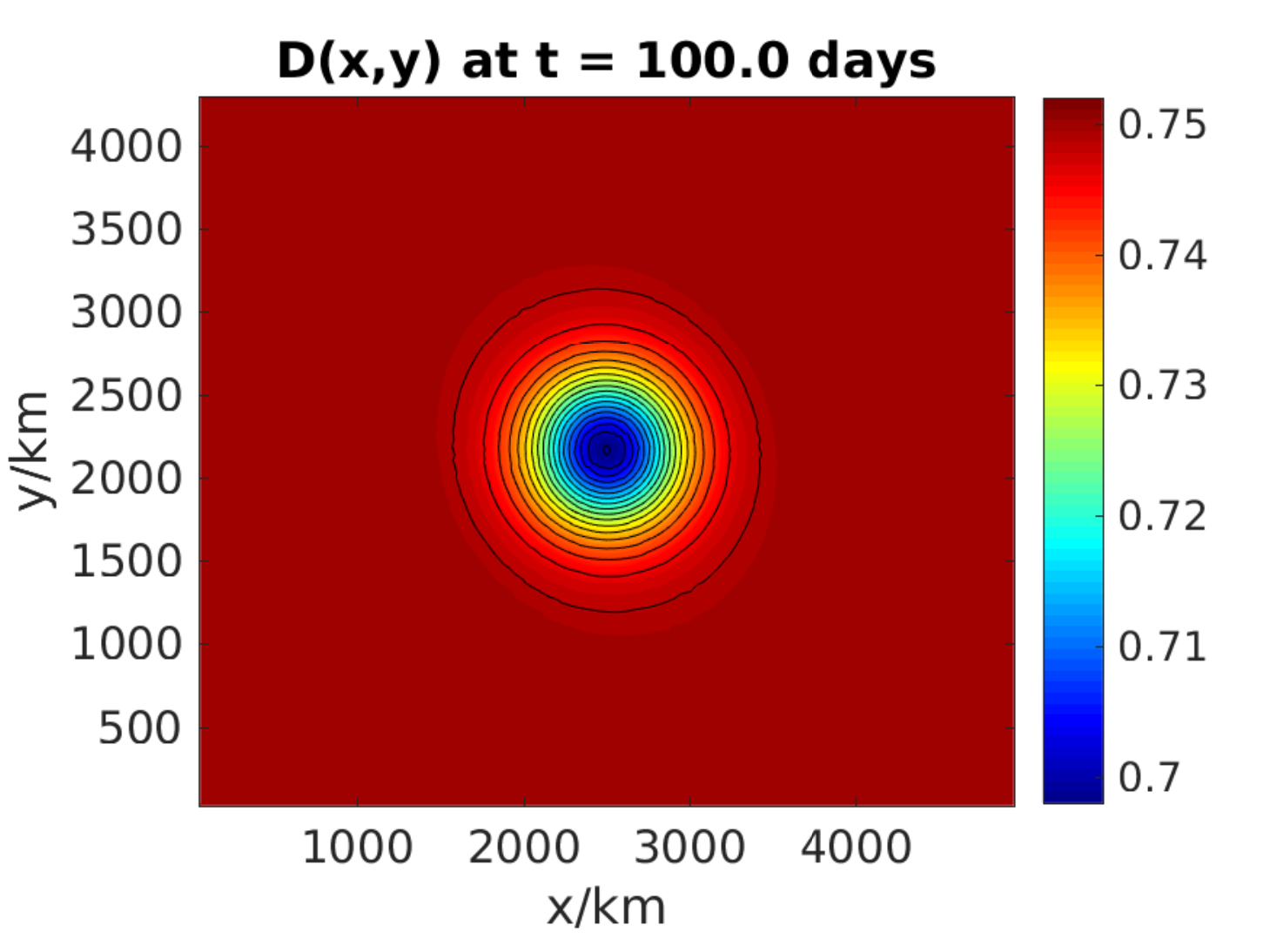}} 
 \end{tabular}
 \caption{Isolated vortex test case: fluid depth $D(x,y)$ at initial time $t=0$ (left) and 
 at $t=100\,$days on a regular (center) and an irregular (right) mesh with $2 \cdot 64^2$ triangular 
 cells. Contours between $0.698\,{\rm km}$ and $0.752\,{\rm km}$ with interval of 
 $0.003\,{\rm km}$.
 }                                                                                             
 \label{fig_dyn_D_singlevortex}
 \end{figure}

 \begin{figure}[t!]\centering
 \begin{tabular}{ccc}
  \hspace*{-0.3cm}{\includegraphics[scale=0.38]{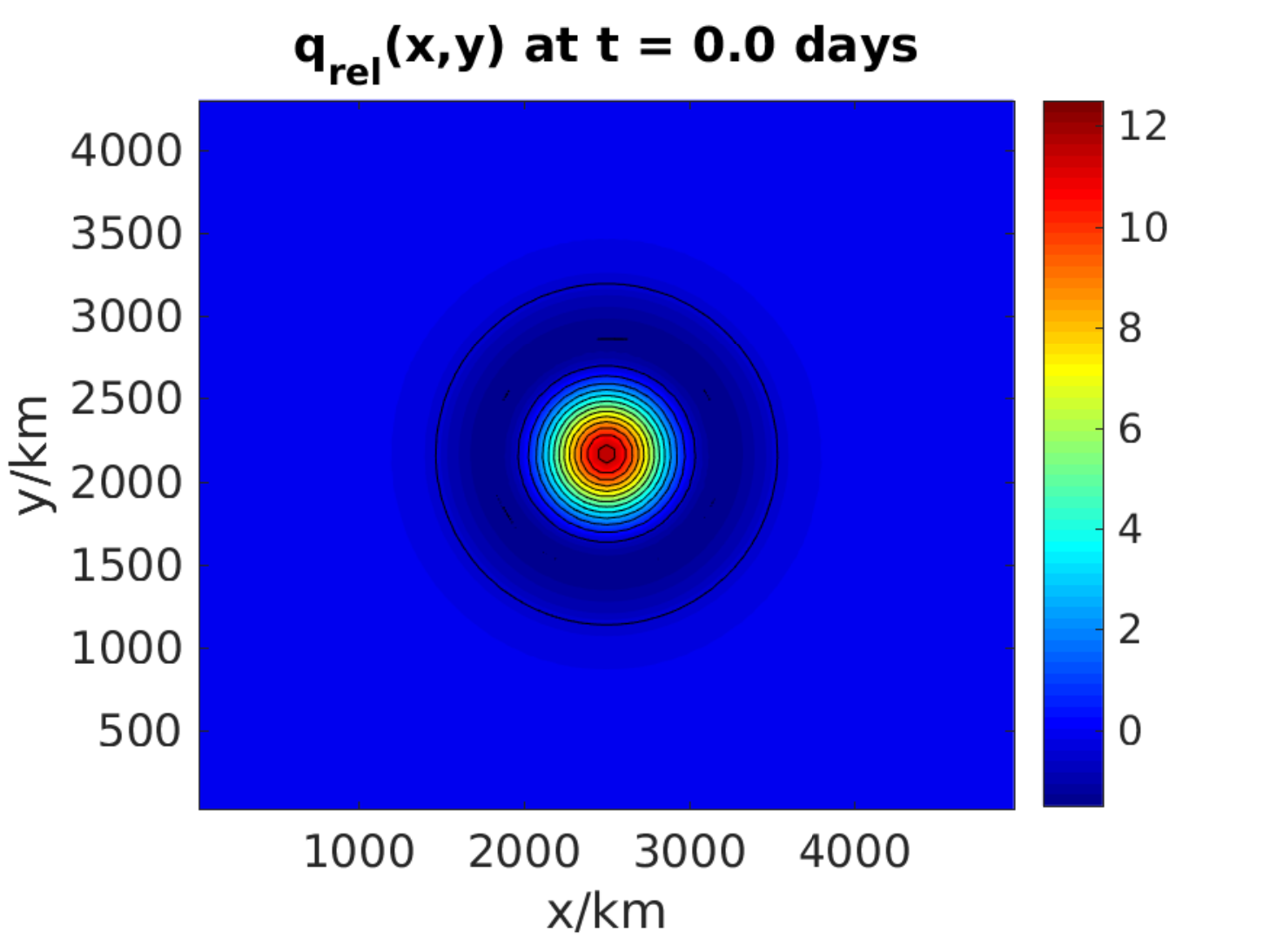}} &
  \hspace*{-0.5cm}{\includegraphics[scale=0.38]{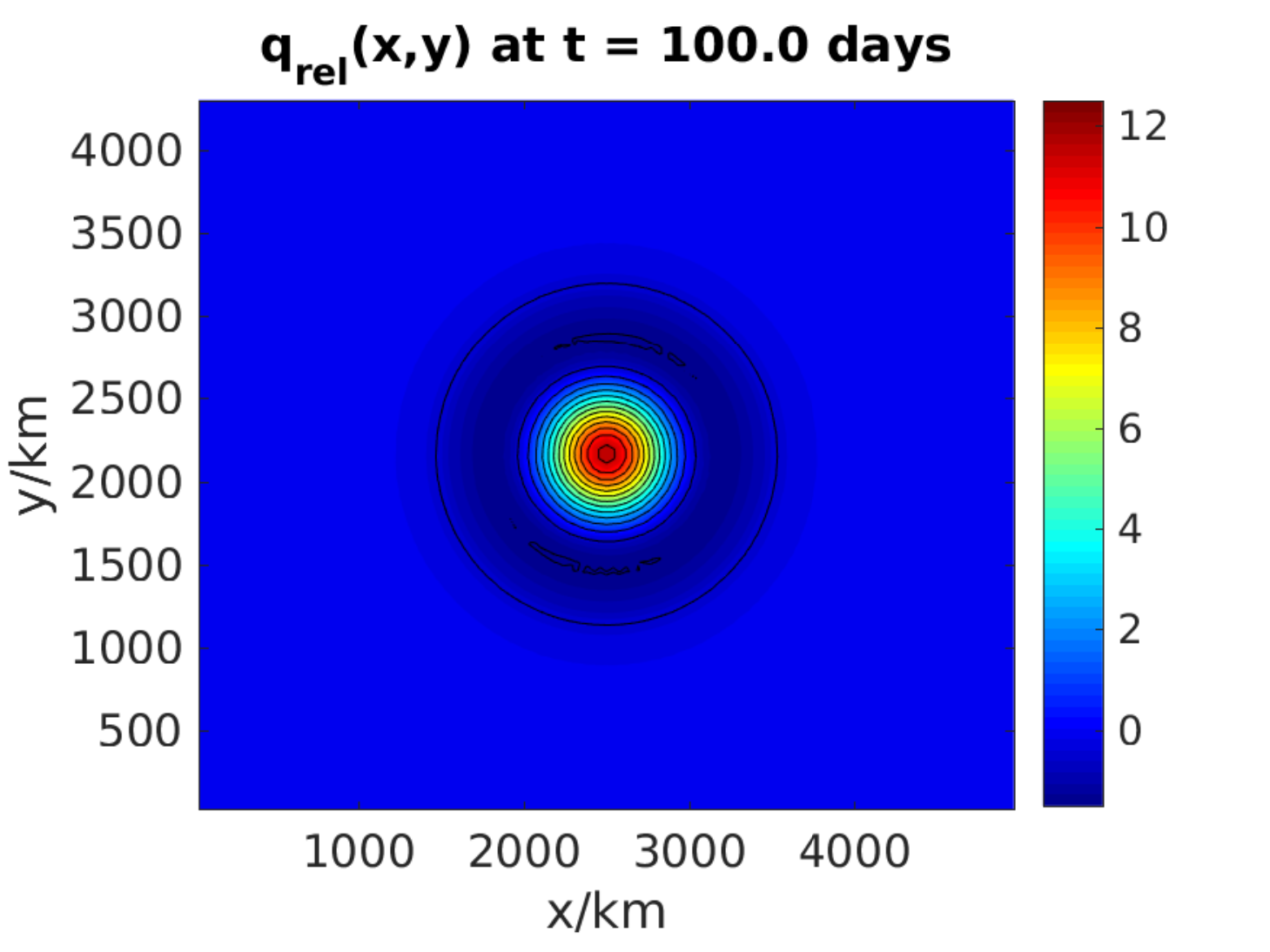}} &
  \hspace*{-0.5cm}{\includegraphics[scale=0.38]{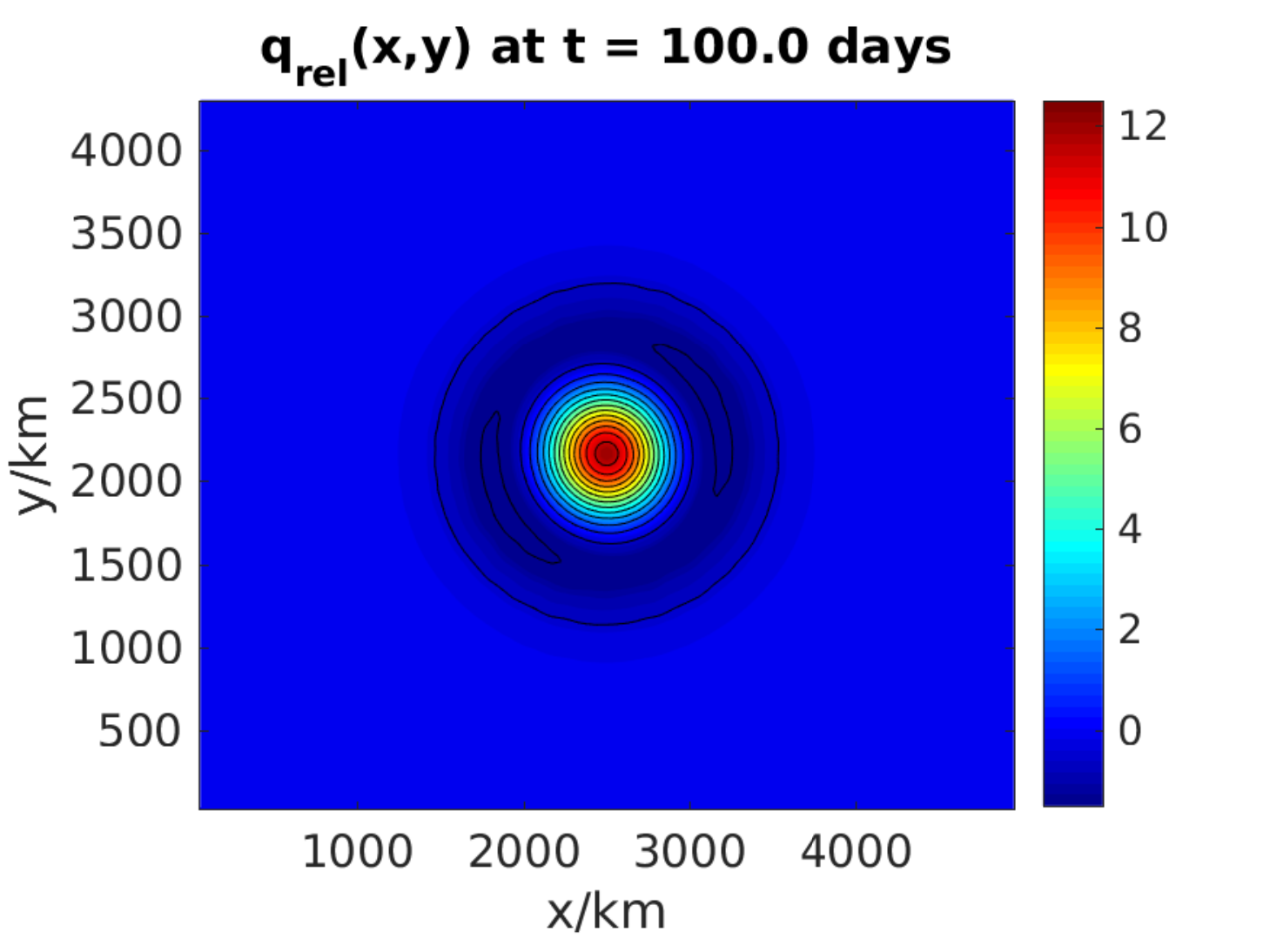}} 
 \end{tabular}
 \caption{Isolated vortex test case: relative potential vorticity $q_{\rm rel}(x,y)$ at initial time $t=0$ (left) and 
 at $t=100\,$days on a regular (center) and an irregular (right) mesh with $2 \cdot 64^2$ triangular cells.
 Contours between $-1.5\,{\rm days^{-1}km^{-1}}$ and $12.5\,{\rm days^{-1}km^{-1}}$ with interval of 
 $1\,{\rm days^{-1}km^{-1}}$.
 }                                                                                             
 \label{fig_dyn_Z_singlevortex}
 \end{figure}


 \begin{figure}[t!]\centering
 \begin{tabular}{cccccc}  
  \hspace*{-.075cm}{\includegraphics[scale=0.38]{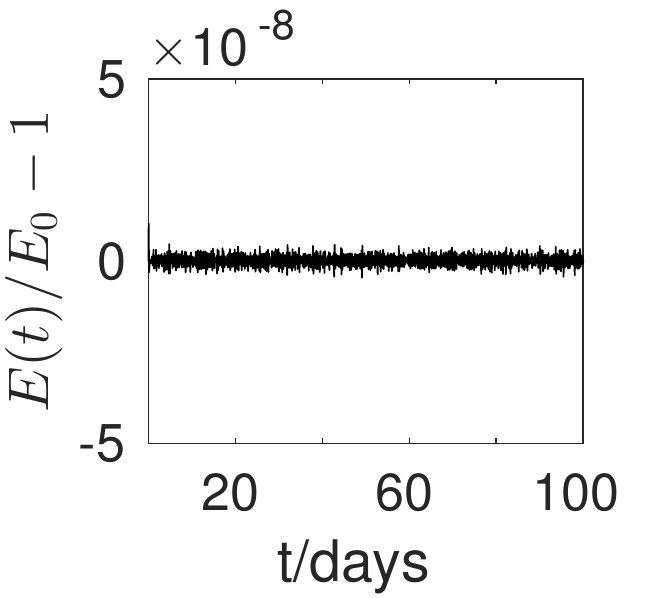}}  & 
  \hspace*{-.075cm}{\includegraphics[scale=0.38]{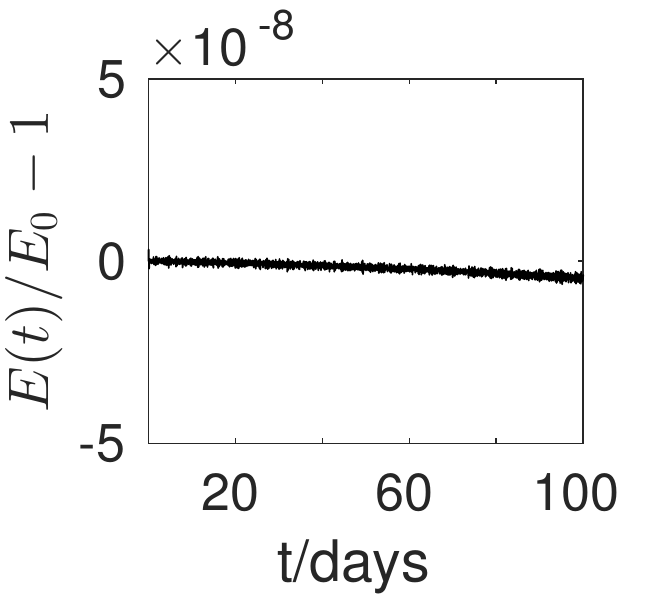}}  &
  \hspace*{-.075cm}{\includegraphics[scale=0.38]{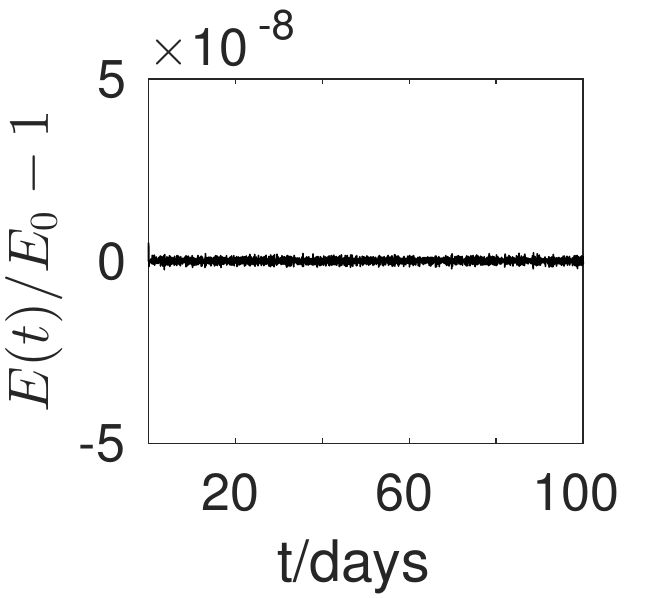}}  & 
  \hspace*{-.075cm}{\includegraphics[scale=0.38]{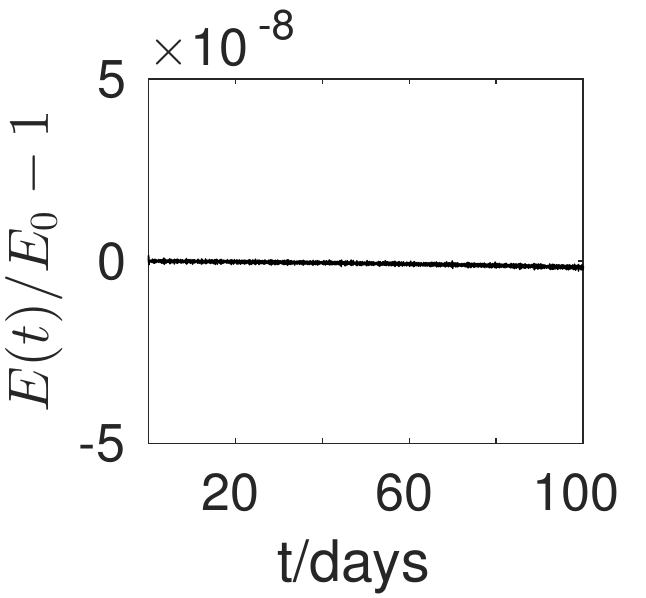}}  &
  \hspace*{-.075cm}{\includegraphics[scale=0.38]{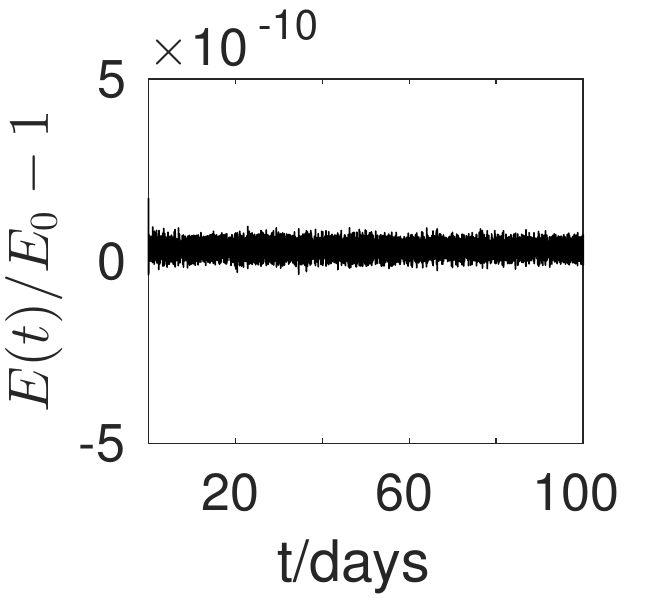}}  & 
  \hspace*{-.075cm}{\includegraphics[scale=0.38]{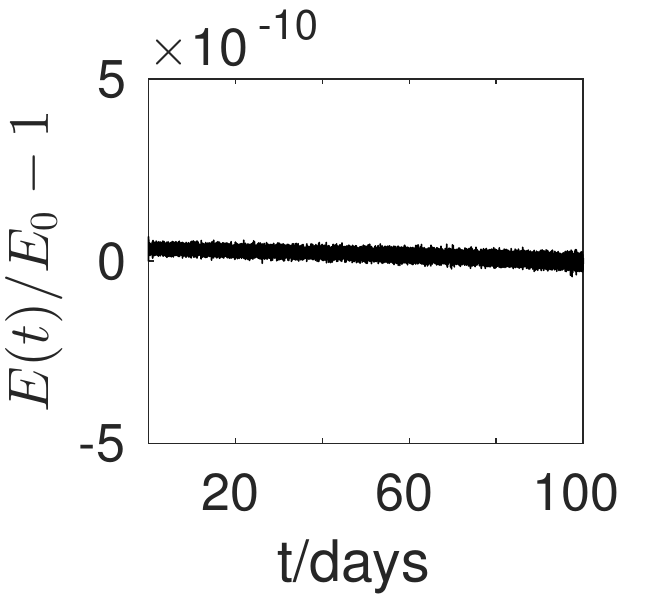}}  \\
  \hspace*{-.075cm}{\includegraphics[scale=0.38]{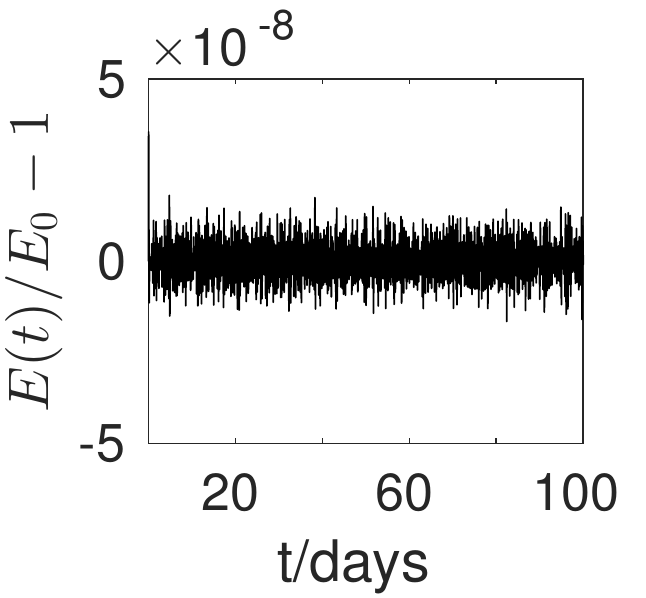}}  & 
  \hspace*{-.075cm}{\includegraphics[scale=0.38]{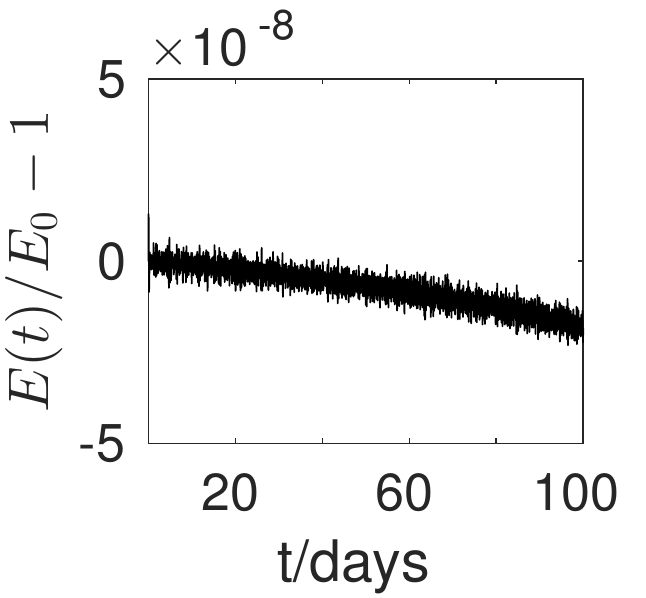}}  &
  \hspace*{-.075cm}{\includegraphics[scale=0.38]{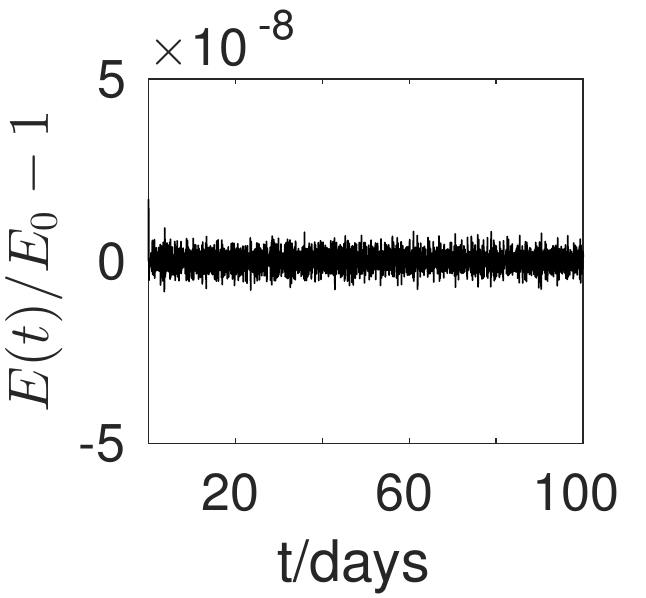}}  & 
  \hspace*{-.075cm}{\includegraphics[scale=0.38]{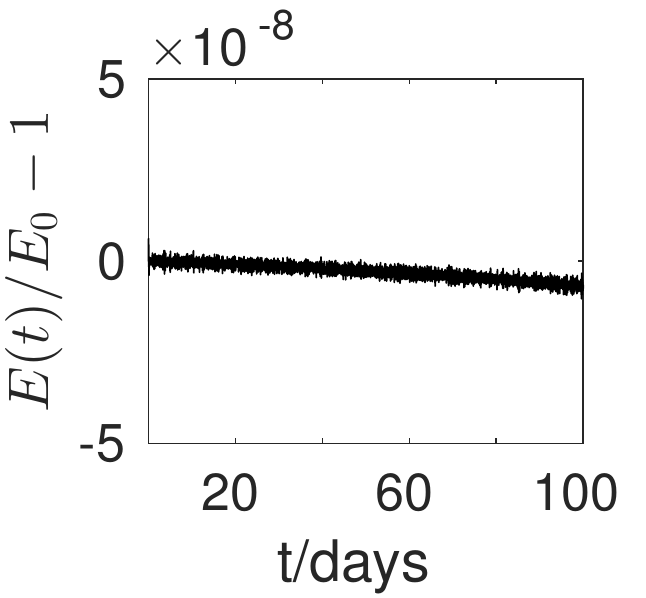}}  &
  \hspace*{-.075cm}{\includegraphics[scale=0.38]{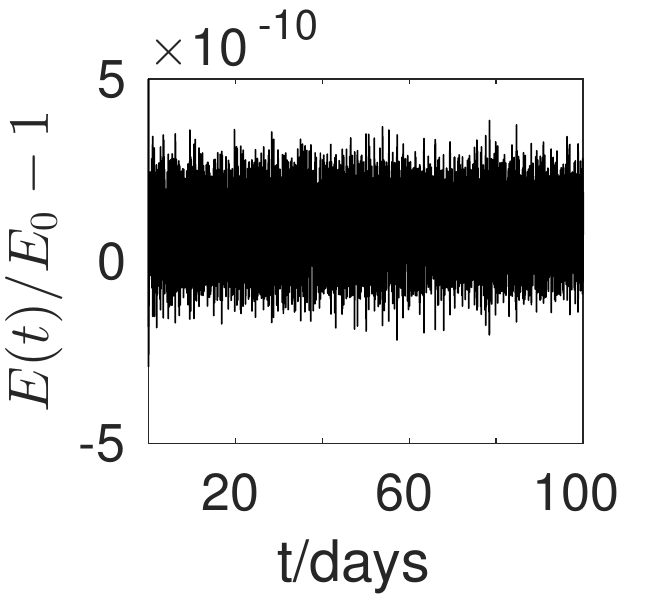}}  & 
  \hspace*{-.075cm}{\includegraphics[scale=0.38]{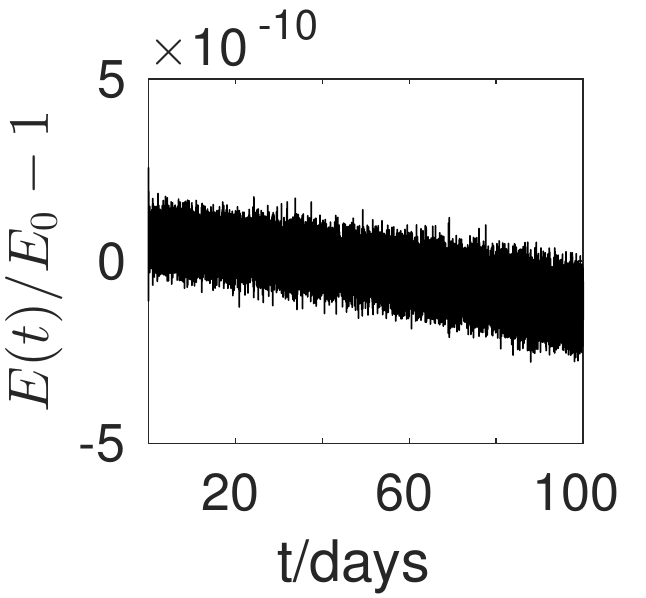}}  
 \end{tabular}
  \caption{Isolated vortex test case: relative errors of total energy $E(t)$ 
  on meshes with $2\cdot 64^2$ cells (upper row) and with $2\cdot 32^2$ cells (lower row) for a fluid in semi-geostrophic ($1^{st},2^{nd}$ column), 
  in quasi-geostrophic ($3^{rd},4^{th}$ column), and in incompressible ($5^{th},6^{th}$ column) regime
  for regular ($1^{st},3^{rd},5^{th}$ column) and irregular ($2^{nd},4^{th},6^{th}$ column) meshes. 
  }                                                                                             
  \label{fig_Z_single_Hall_qoi}
 \end{figure}

 \medskip

 \paragraph{Results of convergence study.}
 We compute $L_2$ and $L_\infty$ error norms of the numerical solutions for fluid depth $D$ and 
 relative potential vorticity $q_{\rm rel}$ to study the spatial convergence behavior 
 of solutions of the variational RSW integrator.  
 For the steady state case, any deviation of the numerical solutions from the initial 
 fields is considered as numerical error. Hence, we define the $L_2$ and $L_\infty$ error measures 
 for a discrete function $f_i(t)$ with respect to its initial values $f_i(0)$ over all triangles $T_i$ by
 \begin{equation}
  L_2 [f]      = \frac{\sqrt{ \sum_i (f_i(t)\Omega_{ii} - f_i(0)\Omega_{ii})^2} }{\sqrt{\sum_i (f_i(0)\Omega_{ii})^2}}, \qquad
  L_\infty [f] = \frac{\max | f_i(t)\Omega_{ii} - f_i(0)\Omega_{ii} |  }{\max | f_i(0)\Omega_{ii}| } .
 \end{equation}
 \textcolor{black}{These errors are determined on regular and irregular meshes with resolutions of
 $2 \cdot 32^2$, $2 \cdot 64^2$, $2 \cdot 128^2$, and $2 \cdot 256^2$ triangles. 
 Moreover, we use for all simulations one fixed time step of $\Delta t = 12\,$s to consider only the spatial convergence behavior.}
  
 Fig.~\ref{fig_isolated_vtx_convergence} illustrates the 
 error values of the numerical solutions for $D$ and $q_{\rm rel}$ 
 after $1\,$day with respect to the corresponding initial states. 
 As $q_{\rm rel}$ differs from $q$ only by $f/h$, which is almost constant here, 
 the convergence rates for $q$ are very similar to those of $q_{\rm rel}$ (hence not shown). 
 Also not shown are error values for the absolute vorticity $\omega_a$ 
 which are, too, very similar to those of $q_{\rm rel}$.
 Using numerical solutions for later times provided qualitatively the 
 same results, only the absolute error values would be larger.
 We compare a fluid in semi-geostrophic (left), quasi-geostrophic (middle),
 and incompressible (right) regimes. Similarly to the conservation properties of the QOI, 
 the absolute error values are the smallest in the incompressible regime while we realize 
 a slightly increase of the errors for semi-geostrophic and quasi-geostrophic flows. 
 Independently from the regime, all solutions for $D$ and $q_{\rm rel}$ show in both error norms 
 convergence rates between $2^{nd}$- and $1^{st}$-order. 
 
 Considering the absolute error values for $D$, one realizes that for all regimes both 
 $L_2$ and $L_\infty$ errors on irregular meshes are significantly smaller than on regular meshes with 
 same resolution $N$, i.e. with the same number of cells. In particular for $L_2[D]$, 
 the error values on an irregular mesh with $N = 2 ({\frac{1}{2}N_{\rm 1D}})^2$ cells are close to those with 
 $N = 2 (N_{\rm 1D})^2$ cells on a regular mesh. This agrees well with the fact that on irregular meshes the central region has 
 cells with halved triangle edge length providing effectively a doubled resolution.
 
 In contrast to the improvements for $D$, the error values for $q_{\rm rel}$ are higher on irregular 
 meshes. 
 Here, the irregular cells might trigger numerical noise in form of additional 
 small scale eddies earlier than on regular meshes leading to the increased error values.
 Nevertheless, the numerical solutions for $q_{\rm rel}$ 
 converge also here with, at least, $1^{st}$-order to the correct solution.

 \begin{figure}[t!]\centering
 \begin{tabular}{c}
  \hspace*{-1.6cm}{\includegraphics[scale=0.35]{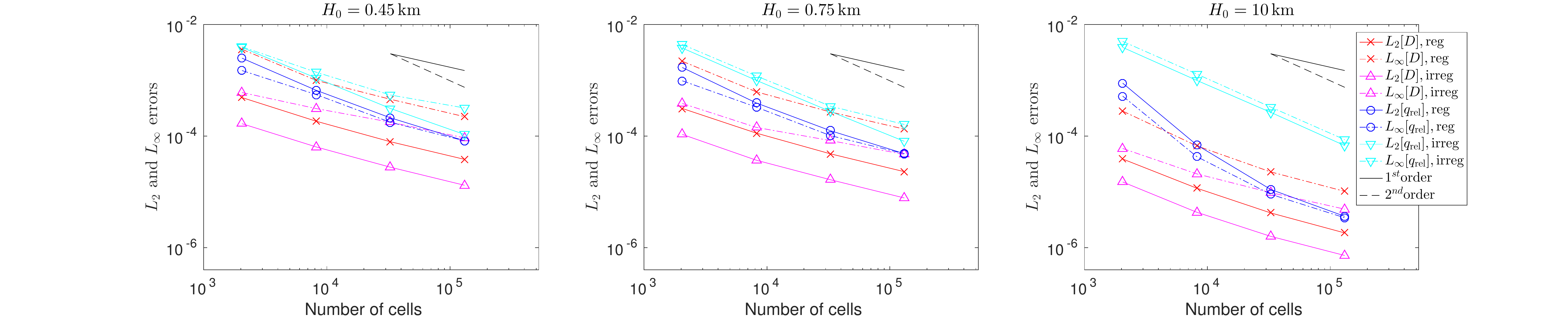}} 
 \end{tabular}
 \caption{Isolated vortex test case: $L_2$ and $L_\infty$ error values of numerical solutions for $D$ and $q_{\rm rel}$ 
 after 1 day as a function of grid resolution for a fluid in semi-geostrophic (left), quasi-geostrophic (middle), 
 and incompressible (right) regime for regular and irregular meshes. 
 }                                                                                             
 \label{fig_isolated_vtx_convergence}
 \end{figure}

 \color{black}

 \subsection{Nonlinear dynamics}
  
 Let us focus next on flows that are dominated by nonlinear processes. 
 By means of two test cases we study whether our variational integrator
 is able to correctly represent the general dynamical behavior while
 conserving the quantities of interest discussed above. 
 
 In the first test case we study the evolution of two interacting corotating vortices in different 
 regimes, i.e. we study semi-geostrophic, quasi-geostrophic, and incompressible 
 flows. This allows us to examine how accurate the scheme represents flows that are 
 in advection and/or divergence dominated regimes. In the second test case, 
 the time evolution of a shear flow in quasi-geostropic regime is studied. 
 We examine the general flow pattern, such as position and magnitude of the vortex 
 cores in the first test case or the growth rate of the instability in the second test case, and compare 
 the solutions with literature, in particular with \cite{BauerPHD2013,RestelliHundermark2009,Reich2006}. 
 As above, we apply also here regular and irregular computational meshes for the simulations.

 \subsubsection{Vortex pair interaction}
 \label{sec_tc_dual}
  
 In this test case, the flow evolution of two interacting corotating vortices in the inviscid 
 case is studied. This vortex pair problem is described in 
 \cite{RestelliHundermark2009,Reich2006,Reich2007,Staniford2007} for instance. Here, we give a brief
 description of the test case and its initialization according to \cite{RestelliHundermark2009}.

 \bigskip  
 \paragraph{Initialization.} 
 We choose the initial conditions in geostrophic equilibrium 
 by prescribing the fluid depth $h$ by an analytic solution while determining the 
 velocity by the constraint of geostrophic balance, i.e. $f {\bf k} \times \mathbf{u} =- g \nabla h$ with 
 ${\bf k} = (0,0,1)^\mathsf{T}$, cf. \cite{RestelliHundermark2009} for more details.
 We use the initial fields
 \begin{equation}\label{equ_init_vortex_pair}
  h(x,y,0) =  H_0 - H' \left[   e^{-\frac{1}{2}({x_1'}^2 + {y_1'}^2  )} + e^{-\frac{1}{2}({x_2'}^2 + {y_2'}^2  )} - \frac{4\pi \sigma_x \sigma_y}{L_x L_y}     \right] \ , 
 \end{equation}
 \begin{equation}\label{equ_init_vortex_pair_velocity}
 \begin{split}
   u(x,y,0) =   - \frac{g H'}{f \sigma_y}  \left[ y_1^{''}  e^{-\frac{1}{2}({x_1'}^2 + {y_1'}^2  )} + y_2^{''} e^{-\frac{1}{2}({x_2'}^2 + {y_2'}^2  )}     \right] \ , \\
   v(x,y,0) =   + \frac{g H'}{f \sigma_x}  \left[ x_1^{''}  e^{-\frac{1}{2}({x_1'}^2 + {y_1'}^2  )} + x_2^{''} e^{-\frac{1}{2}({x_2'}^2 + {y_2'}^2  )}     \right] \ ,
 \end{split}
 \end{equation}
 where we apply for $i=1,2$ the periodic extensions
 \begin{equation}\label{equ_init_single_vortex_defn}
 \begin{split}
  x'_{i} = \frac{L_x}{\pi \sigma_x}\sin \left( \frac{\pi}{L_x}(x - x_{c_{i}}) \right)\, , \quad 
     y'_{i} = \frac{L_y}{\pi \sigma_y}\sin \left( \frac{\pi}{L_y}(y - y_{c_{i}}) \right)\, , \\
  x^{''}_{i} = \frac{L_x}{2\pi \sigma_x}\sin \left( \frac{2\pi}{L_x}(x - x_{c_{i}}) \right)\, , \quad 
     y^{''}_{i} = \frac{L_y}{2\pi \sigma_y}\sin \left( \frac{2\pi}{L_y}(y - y_{c_{i}}) \right)\, . \\
 \end{split}
 \end{equation}
 The centers of the vortices and $\sigma_x,\sigma_y$ are given by
 \begin{equation}\label{equ_dualvortexoffset}
  \begin{split}
  &  x_{c_{1}} = \left(\frac{1}{2} - o\right)L_x \, , \quad  x_{c_{2}} = \left(\frac{1}{2} + o\right)L_x \, , \quad   \sigma_x = \frac{3}{40}L_x  \, ,     \\
  &  y_{c_{1}} = \left(\frac{1}{2} - o\right)L_y \, ,  \quad  y_{c_{2}} = \left(\frac{1}{2} + o\right)L_y \, , \quad \sigma_y = \frac{3}{40}L_y  \, ,
  \end{split} 
 \end{equation}
 using $o = 0.1$ and $H' = 75\,$m.

As in Sect.~\ref{sec_tc_single}, we map the analytical function \eqref{equ_init_vortex_pair} onto 
 the mesh by sampling it at each cell center to obtain $D_i$. Considering 
 $h$ as stream function, we initialize the normal velocity values at the cell faces by using the discrete 
 tangential gradient operator~\eqref{curl_grad}, i.e. 
 $V_{ij} = -\frac{g}{f} \operatorname{G}^\perp(h)_{ij}$, rather than initializing the velocity directly 
 via \eqref{equ_init_vortex_pair_velocity}. This approach leads to discrete
 fields for fluid depth and velocity that are in geostrophic balance.
 The initial fields are shown in Fig.~\ref{fig_tc2_dual_DandVorticity}. Again, 
 we examine the variational scheme with respect to the three flow regimes:
 (i) $H_0 = 450\,$m for semi-geostrophic, (ii) $H_0 = 750\,$m for quasi-geostropic,
 and (iii) $H_0 = 10\,$km for incompressible flows. 
To allow for comparisons to \cite{RestelliHundermark2009}, we apply 
 also here a test case with trivial bottom topography.

 \textcolor{black}{For all dual vortex simulations, we apply a time step of $\Delta t = 12\,\rm s$ 
 and use regular and irregular meshes with $2\cdot 256^2$ cells giving a minimal edge length of 
 $\Delta x_{min} = 1.313\,$km. Similarly to the convergence test, we obtain hence
 for the largest water depth of $H_0= 10\,$km a Courant number $C = 2.86$. 
 }

 \begin{figure}[t!]\centering
 \begin{tabular}{cc}
  \hspace*{-0.0cm}{\includegraphics[scale=0.5]{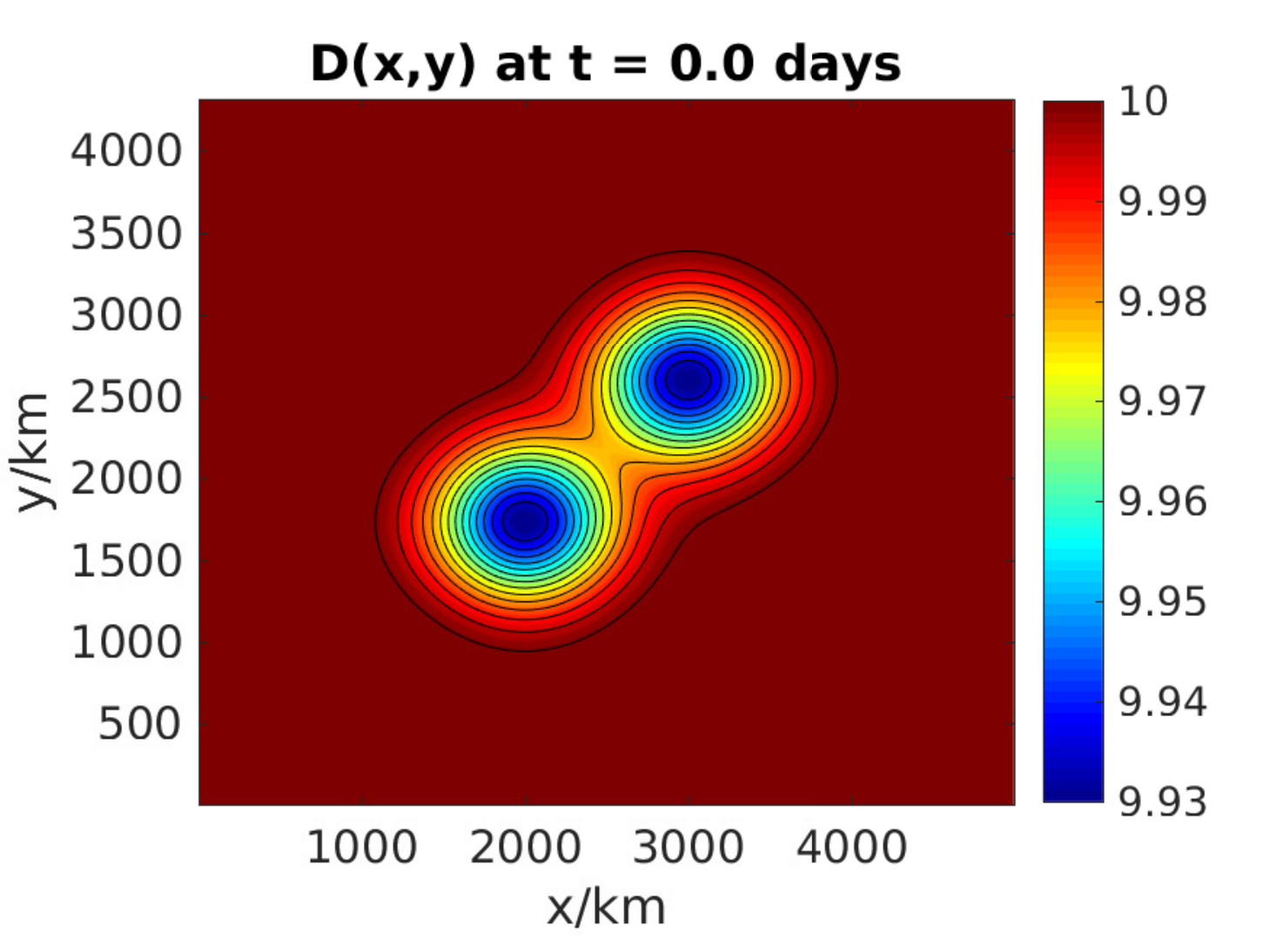}} &  
  \hspace*{-0.0cm}{\includegraphics[scale=0.5]{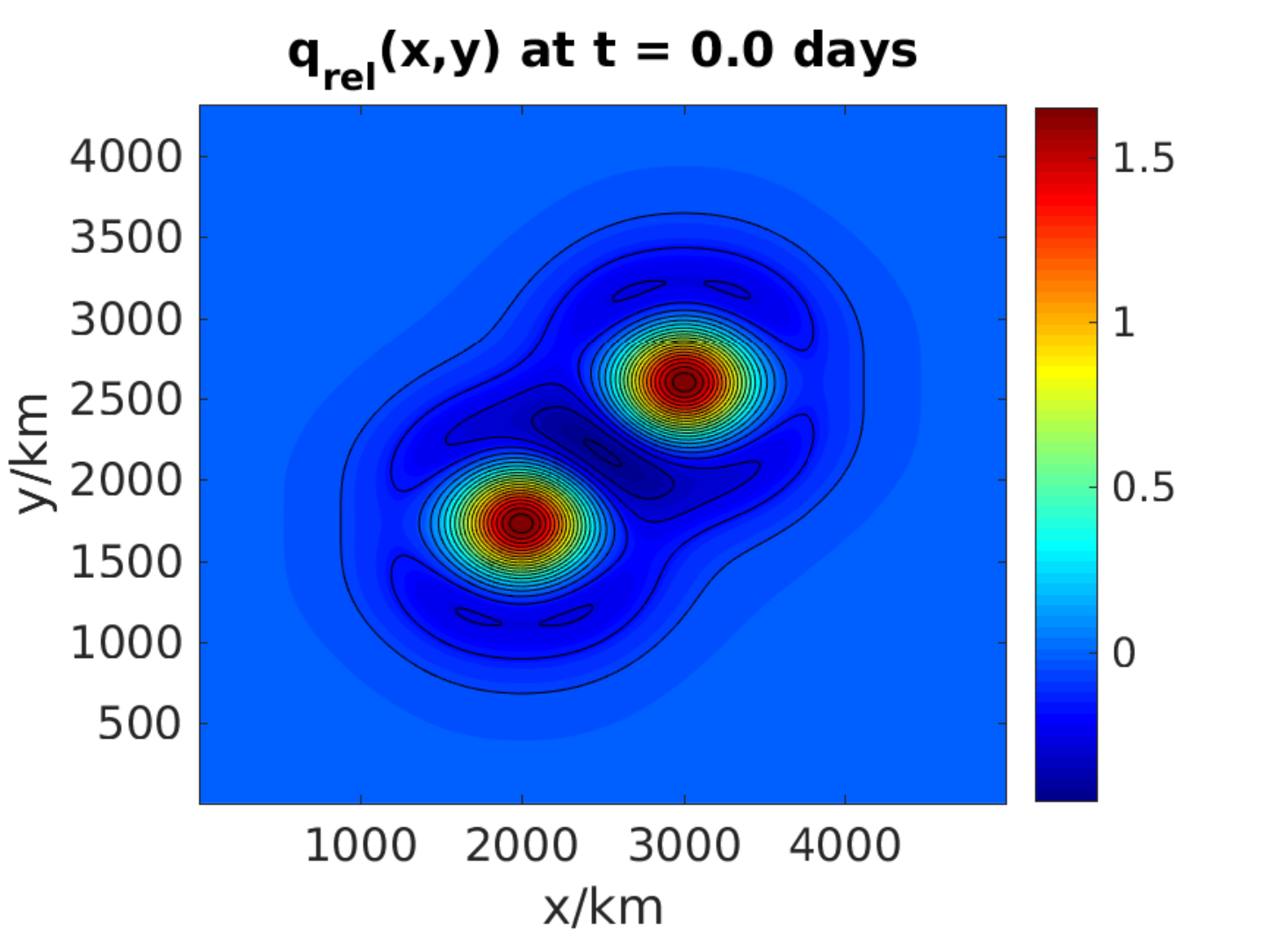}} 
 \end{tabular}
  \caption{Initial fluid depth $D$ and relative potential vorticity $q_{\rm rel}$
  in geostrophic balance. 
  Contours for $D$ between $9.93\,{\rm km}$ and $10\,{\rm km}$ with interval of 
 $0.005\,{\rm km}$ and for $q_{\rm rel}$ between 
 $-0.45\,{\rm days^{-1}km^{-1}}$ and $1.7\,{\rm days^{-1}km^{-1}}$ with interval of 
 $0.1\,{\rm days^{-1}km^{-1}}$.
 }                                                                                             
  \label{fig_tc2_dual_DandVorticity}
 \end{figure}

 \medskip

 \paragraph{Results.} 
 Let us first discuss the flow evolution of the two interacting corotating vortices. 
 For the chosen initial distance between the vortex cores exhibiting an area of negative 
 vorticity in between them (cf. Fig.~\ref{fig_tc2_dual_DandVorticity}), the vortex cores 
 are too far apart to allow for a merger (see e.g. \cite{Baueretal2014} for more details). 
 Instead, the two cores are mutually repelled due to nonlinear effects. 
 The corresponding time evolution of the relative potential vorticity field is shown in 
 Fig.~\ref{fig_Z_dual_vortices} for the incompressible case, but the flow evolves very 
 similar for all flow regimes studied (cf. Fig.~\ref{fig_dual_qoi}).
 
 Comparing in Fig.~\ref{fig_Z_dual_vortices} simulations performed on either a regular 
 (upper row) or an irregular mesh (lower row), we notice that both relative potential vorticity fields agree very 
 well, in particular the speed of the mutual repulsion of the two vortex cores and the thin 
 filaments between them. Also both fields of fluid depth agree very well (not shown). 
 This very good match between the corresponding fields is also given in case of 
 semi-geostrophic and quasi-geostrophic flows (also not shown).

 The snapshots of relative potential vorticity and fluid depth at day 10, presented in 
 Fig.~\ref{fig_Z_dual_Hall_vortices} for regular meshes, allows us further to compare our 
 simulations with those performed in \cite{BauerPHD2013,RestelliHundermark2009}. For both mesh 
 types and for all flow regimes, i.e. semi-geostrophic (right column), quasi-geostrophic 
 (middle column), and incompressible (right column), we obtain solutions that are very close to 
 those determined in \cite{RestelliHundermark2009} with a conventional triangular C-grid discretization 
 of the shallow water equations. In particular the fields agree very well when considering the 
 magnitude of $q_{\rm rel}$ and $D$ fields and the position of the two vortex cores. 
 This good match is also given when comparing our results with those in \cite{BauerPHD2013} 
 obtained by a corresponding hexagonal C-grid scheme. 
 
 The relative errors of the QOI are shown in Fig.~\ref{fig_dual_qoi}. 
 The first and second 
 column correspond to the semi-geostrophic case, the third and forth to the quasi-geostrophic case, 
 and the fifth and sixth to the incompressible case. The relative errors in total energy (upper row) 
 for flows in semi- and quasi-geostrophic regimes are at the order of $10^{-7}$ for both mesh types. 
 In the incompressible case, these errors are about two orders of magnitudes smaller, just as in the 
 isolated vortex test case. 
 \textcolor{black}{They are related to the time step $\Delta t = 12\,$s} and decrease further at $1^{st}$-order when using smaller time step sizes. 
 The error values are very much independent from the spatial resolution and they 
 exhibit the expected oscillatory behavior of a symplectic time integrator.

 Here, and for all simulations performed, the mass-weighted potential vorticity $PV$ is conserved 
 at the order of machine precision, just as mass $m$, for both regular and irregular meshes
 and for all temporal and spatial resolutions studied. 
 Considering the relative errors of $PE$ on regular meshes, they show a similar dependency on the flow 
 regime as $E$, with an accuracy at the order of $10^{-4}$ in the semi- and quasi-geostrophic 
 regimes and one order of magnitude smaller in the incompressible case. 
 Our RSW scheme conserves these values well, although the 
 variational discretization method does not treat neither $PV$ nor $PE$ as discrete Casimirs.
 Hence we do not expect them to be strictly conserved.
 In fact, here for the vorticity dominated vortex interaction test case, we notice a growth of $PE$ on irregular meshes 
 at the order of $10^{-3}$ for a simulation of 10 days. This growth rate is however
 rather small and in the shear flow test case studied below, even much smaller. 
 Finally we point out that also the error values for $PE$ are more or less independent 
 from spatial {\em and} temporal resolutions (not shown).

 \begin{figure}[t!]\centering
 \begin{tabular}{cccc}
  \hspace*{-0.25cm}{\includegraphics[scale=0.30]{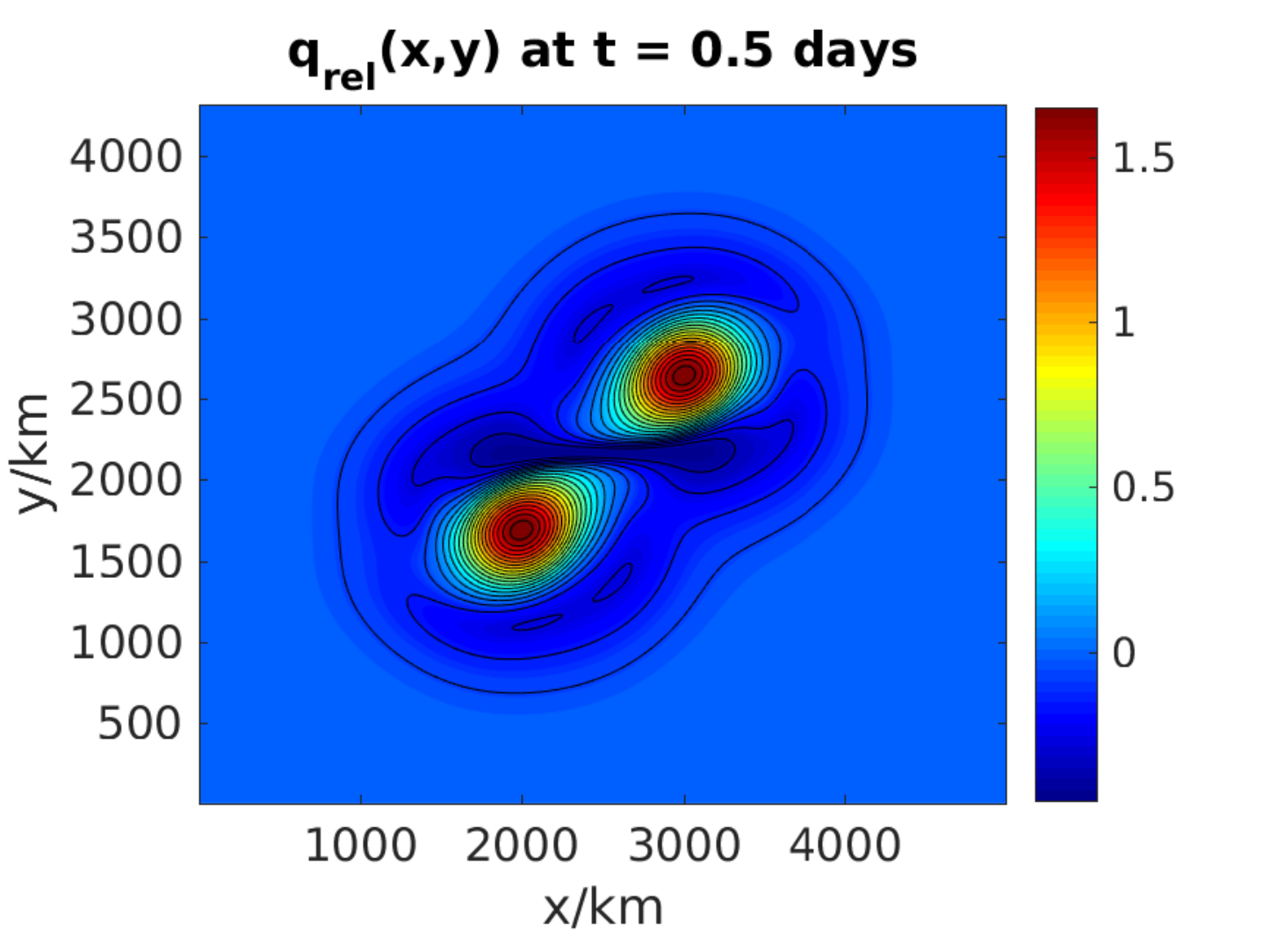}} &  
  \hspace*{-0.5cm}{\includegraphics[scale=0.30]{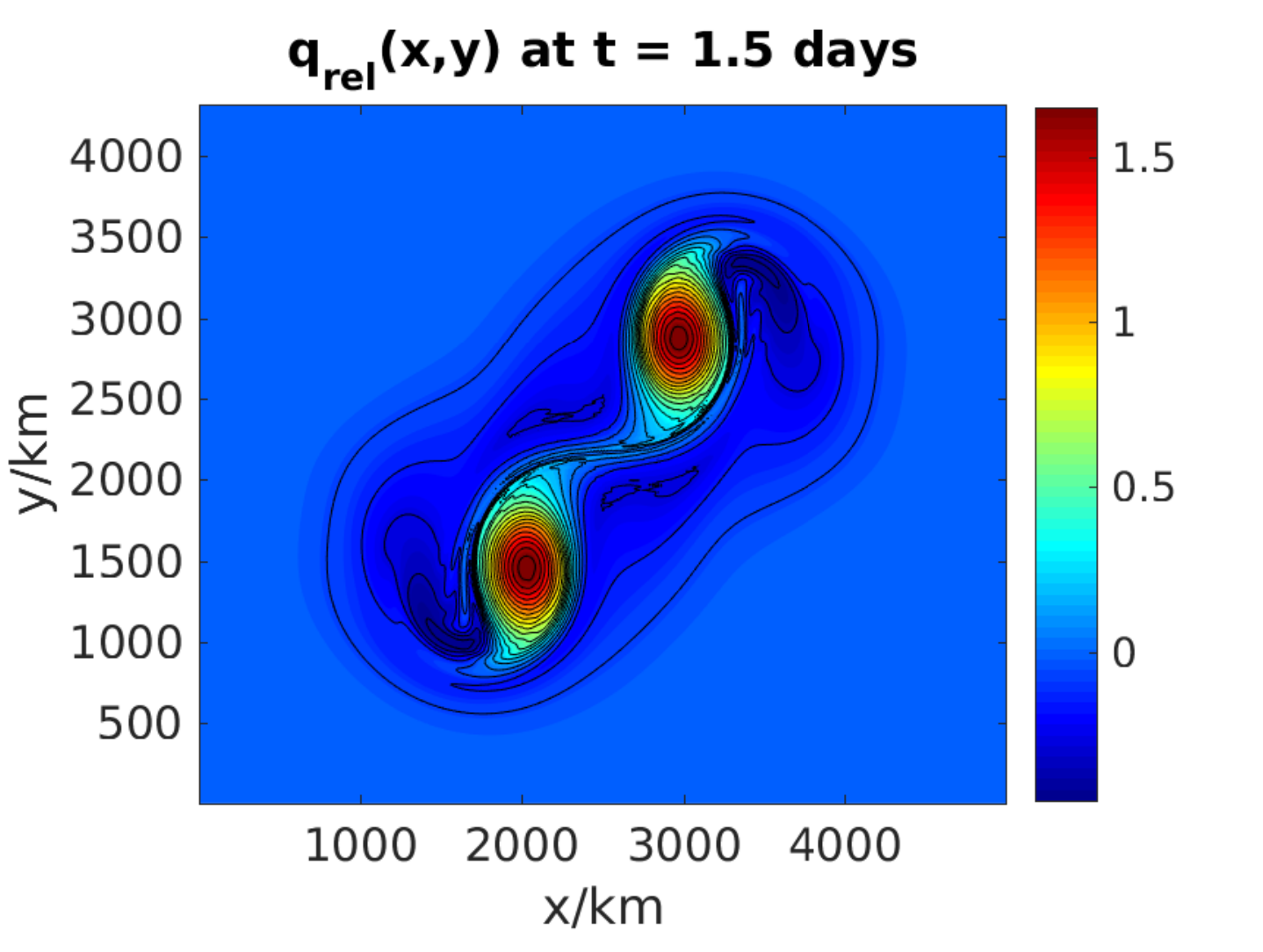}} &
  \hspace*{-0.5cm}{\includegraphics[scale=0.30]{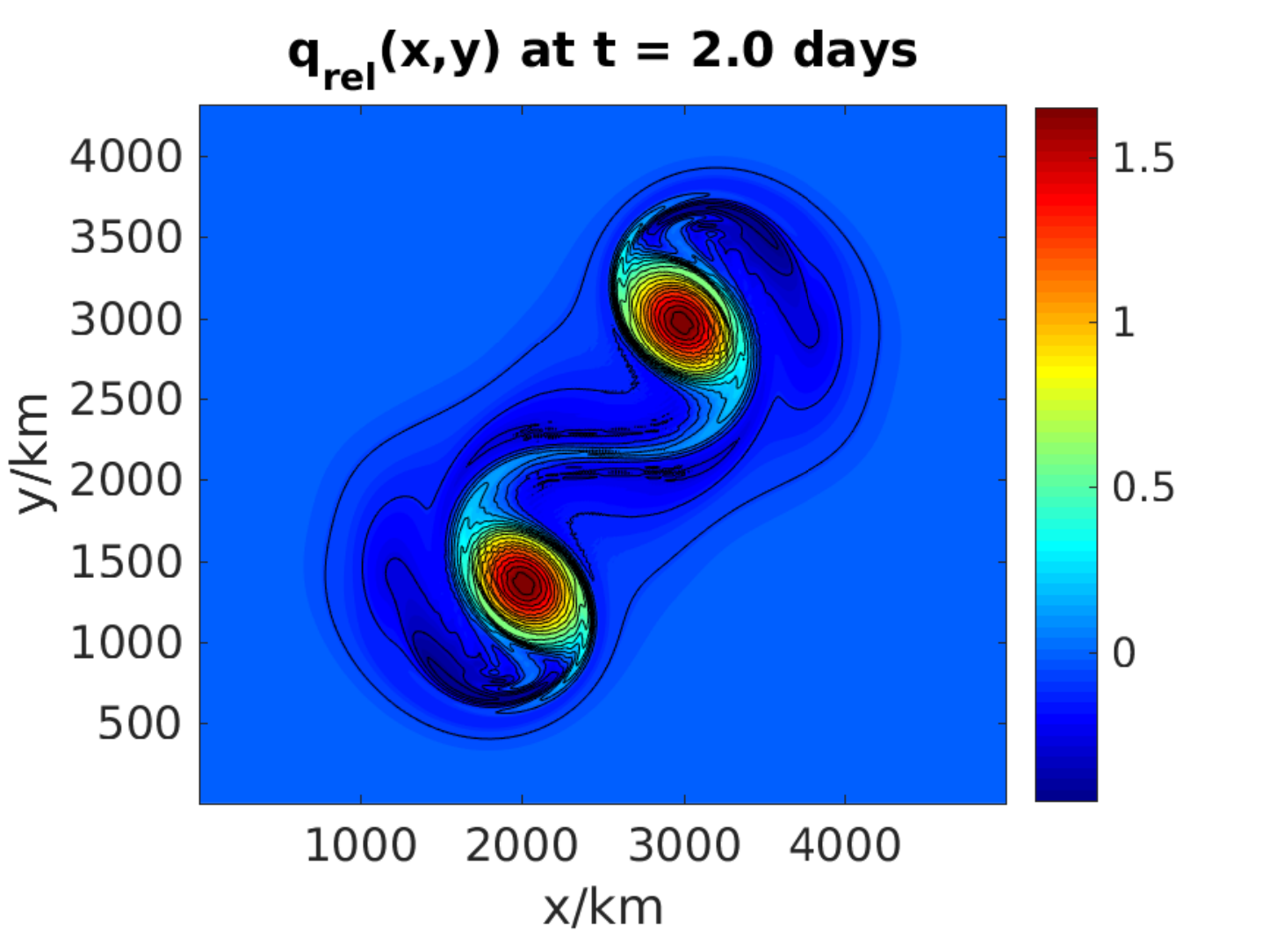}}  &
  \hspace*{-0.5cm}{\includegraphics[scale=0.3]{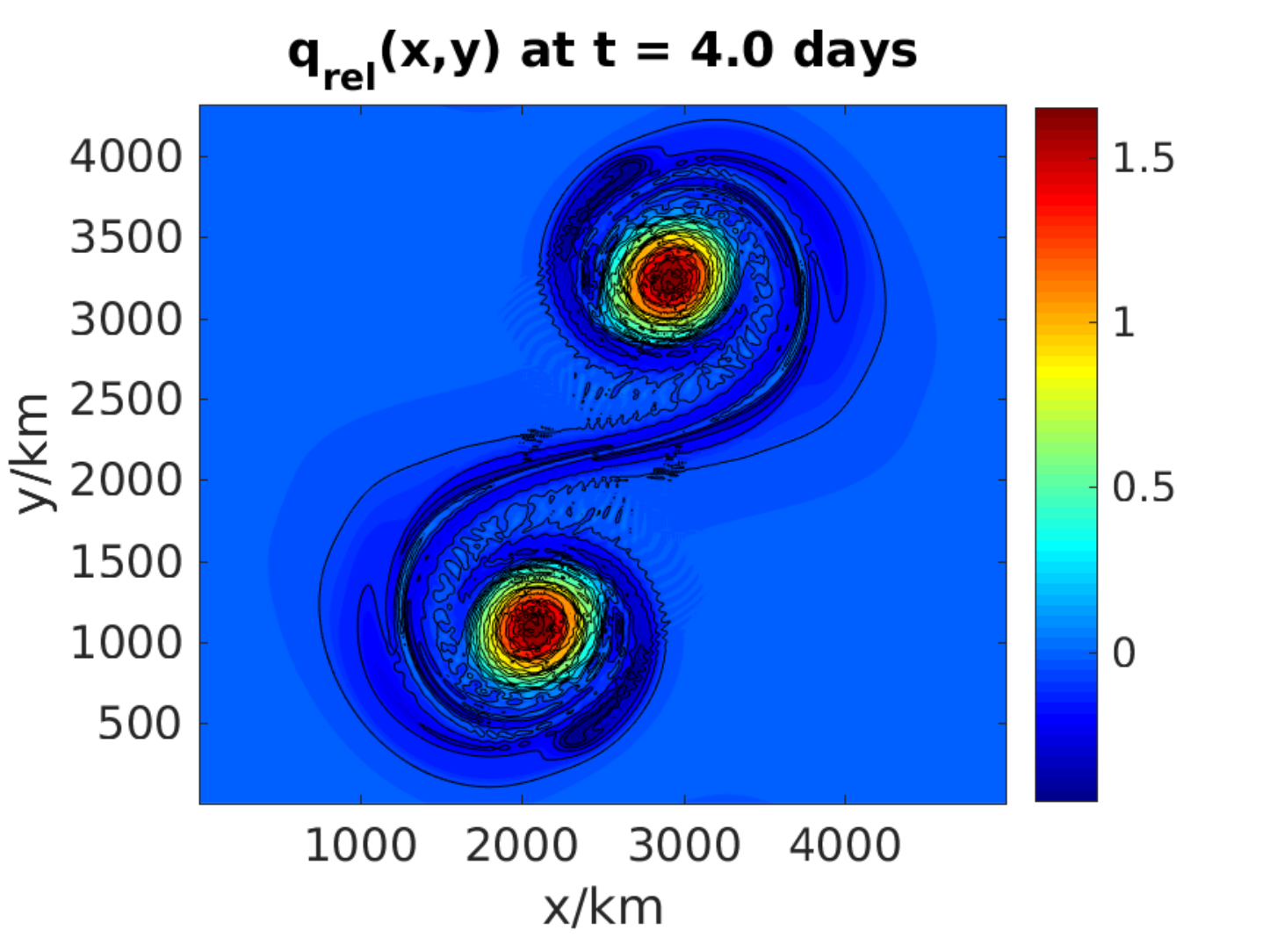}}  \\
  \hspace*{-0.25cm}{\includegraphics[scale=0.3]{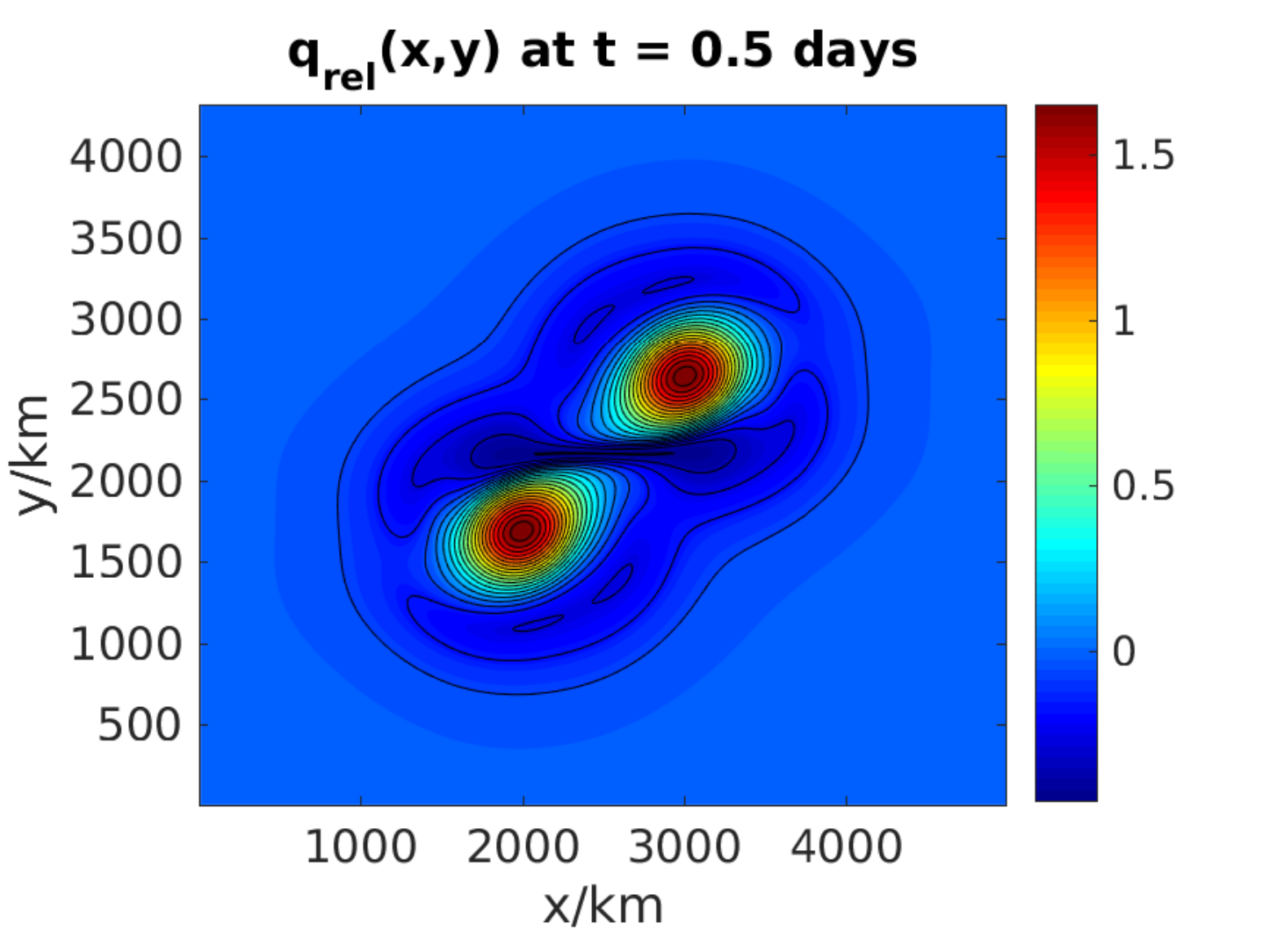}} &  
  \hspace*{-0.5cm}{\includegraphics[scale=0.3]{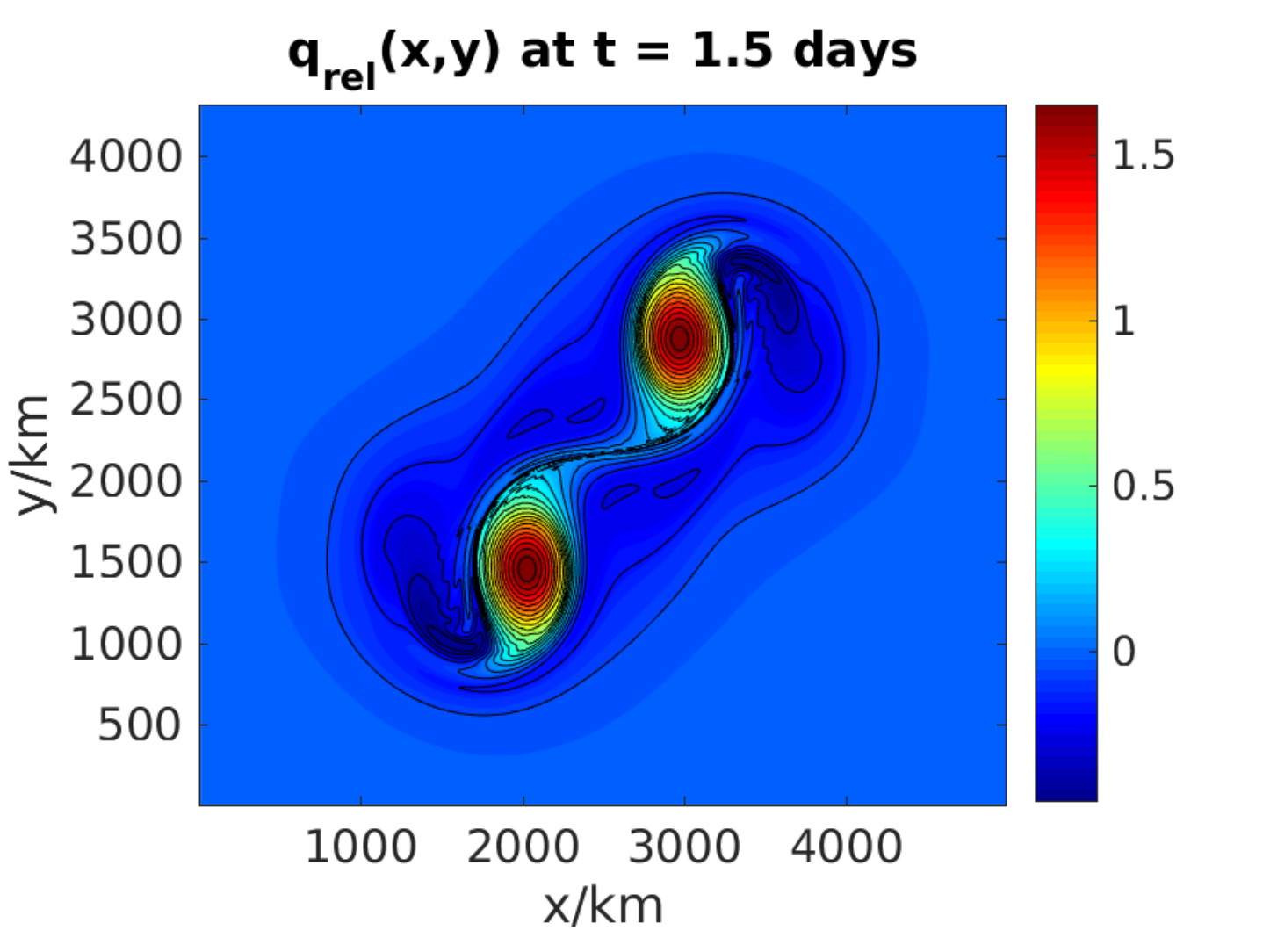}} &
  \hspace*{-0.5cm}{\includegraphics[scale=0.3]{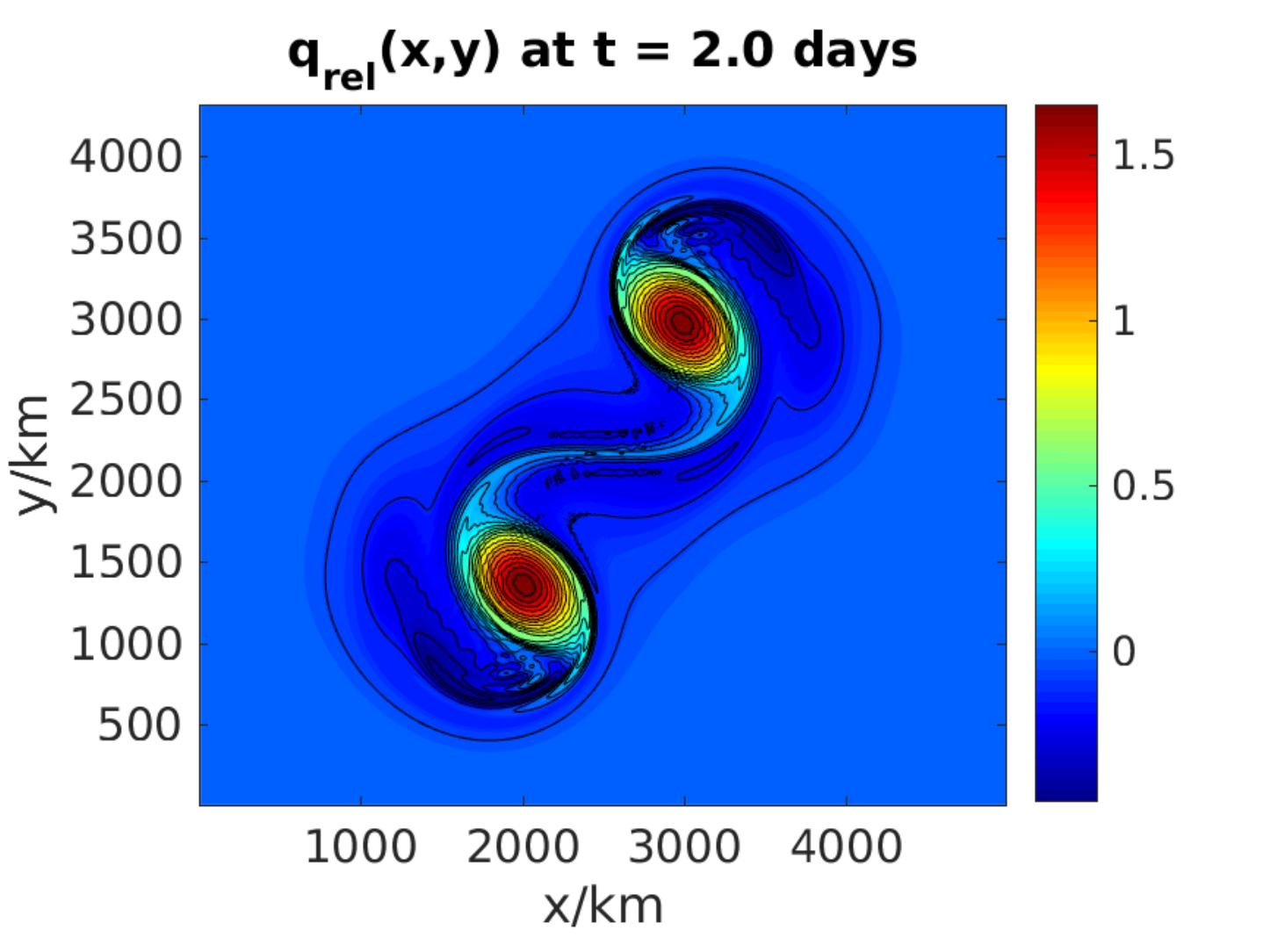}}  &
  \hspace*{-0.5cm}{\includegraphics[scale=0.3]{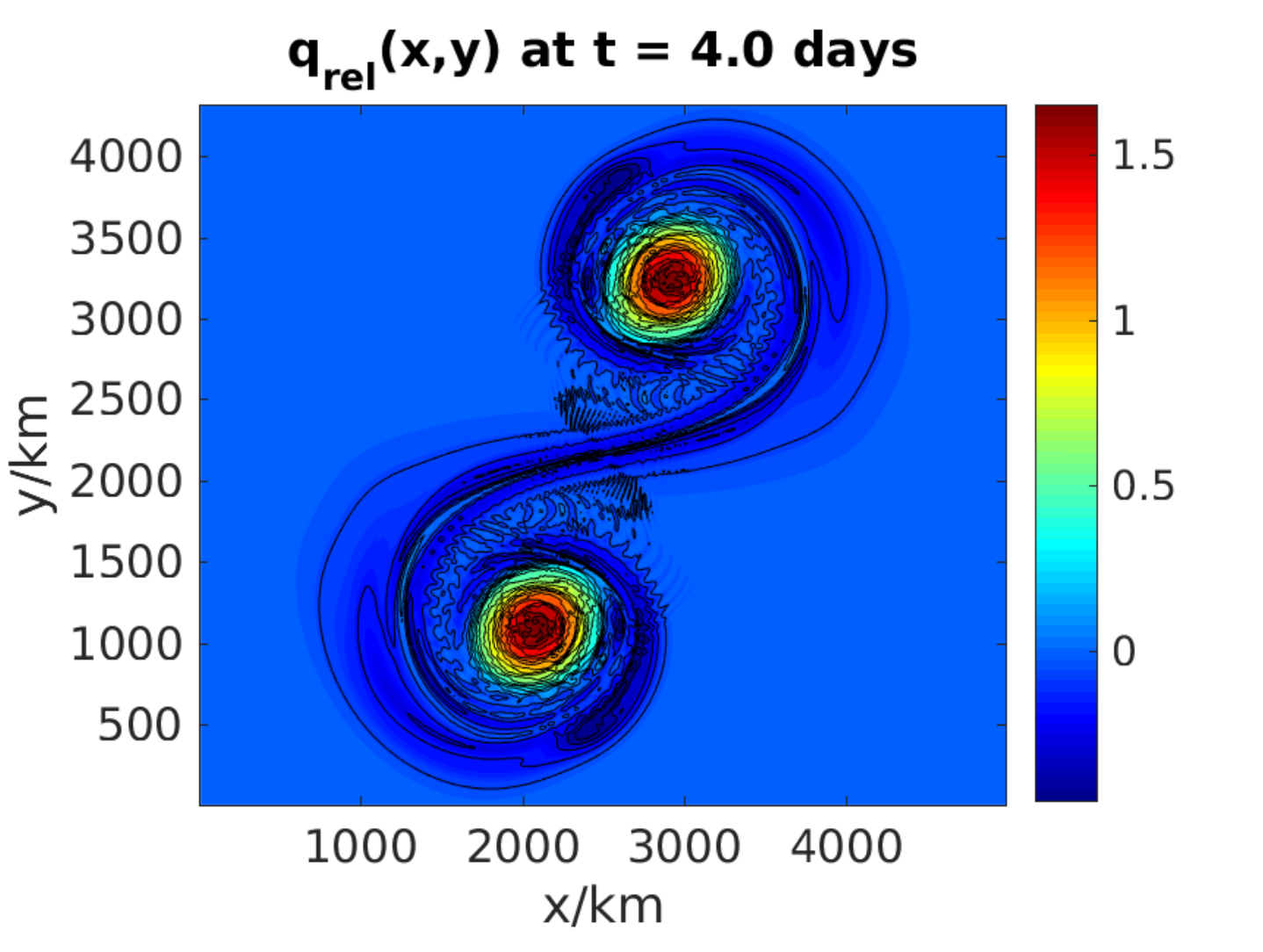}}  \\
 \end{tabular}
  \caption{Snapshots of relative potential vorticity $q_{\rm rel}$ for $H_0 = 10\,$km
  on regular (upper row) and irregular (lower row) meshes with $2 \cdot 256^2$ cells. 
  Contours between $-0.45\,{\rm days^{-1}km^{-1}}$ and $1.7\,{\rm days^{-1}km^{-1}}$ with 
  interval of $0.1\,{\rm days^{-1}km^{-1}}$.
  }                                                                                             
  \label{fig_Z_dual_vortices}
 \end{figure}

 \begin{figure}[h!]\centering
 \begin{tabular}{ccc}  
  \hspace*{-.35cm}{\includegraphics[scale=0.35]{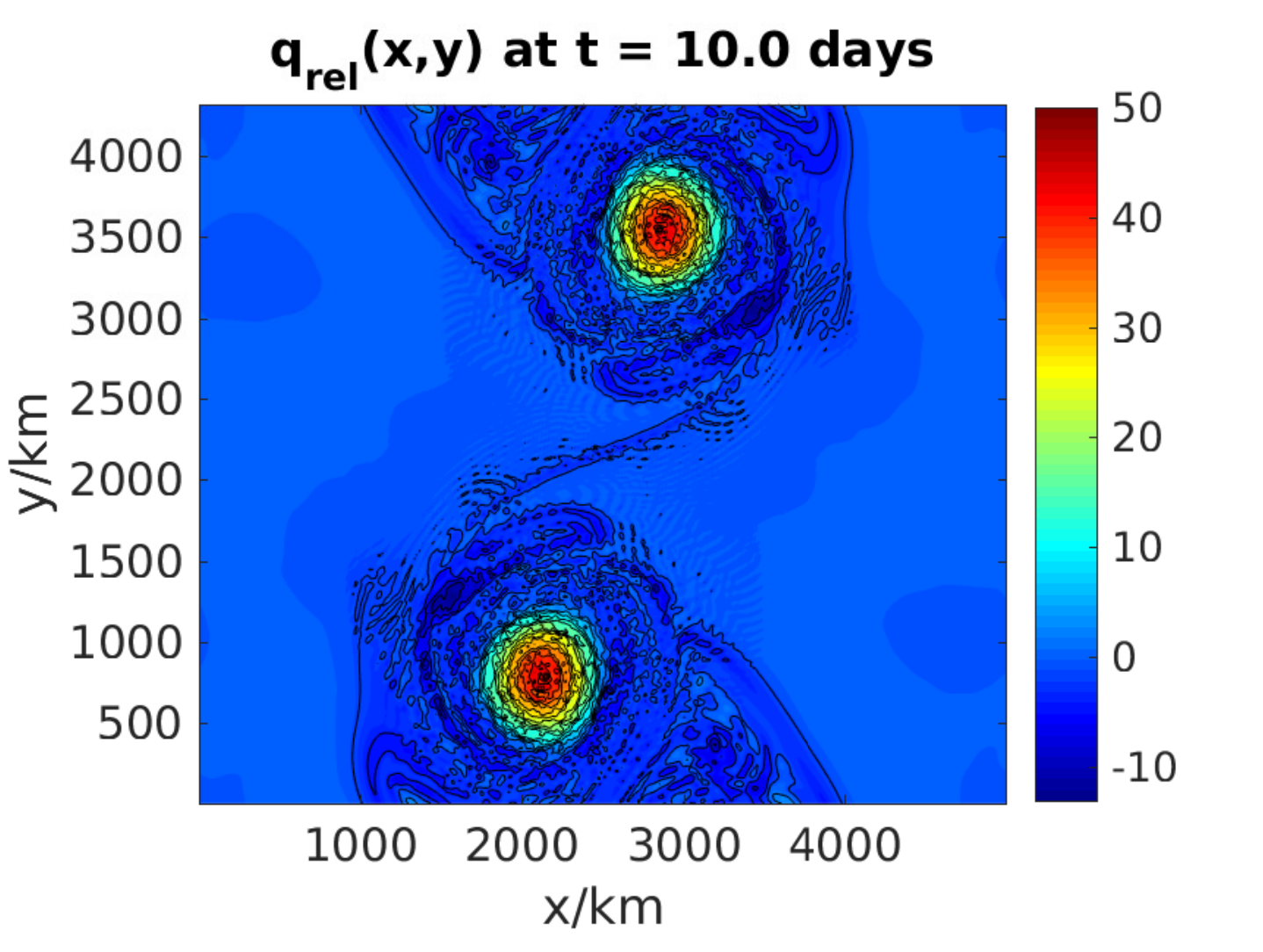}}  &
  \hspace*{-.5cm}{\includegraphics[scale=0.35]{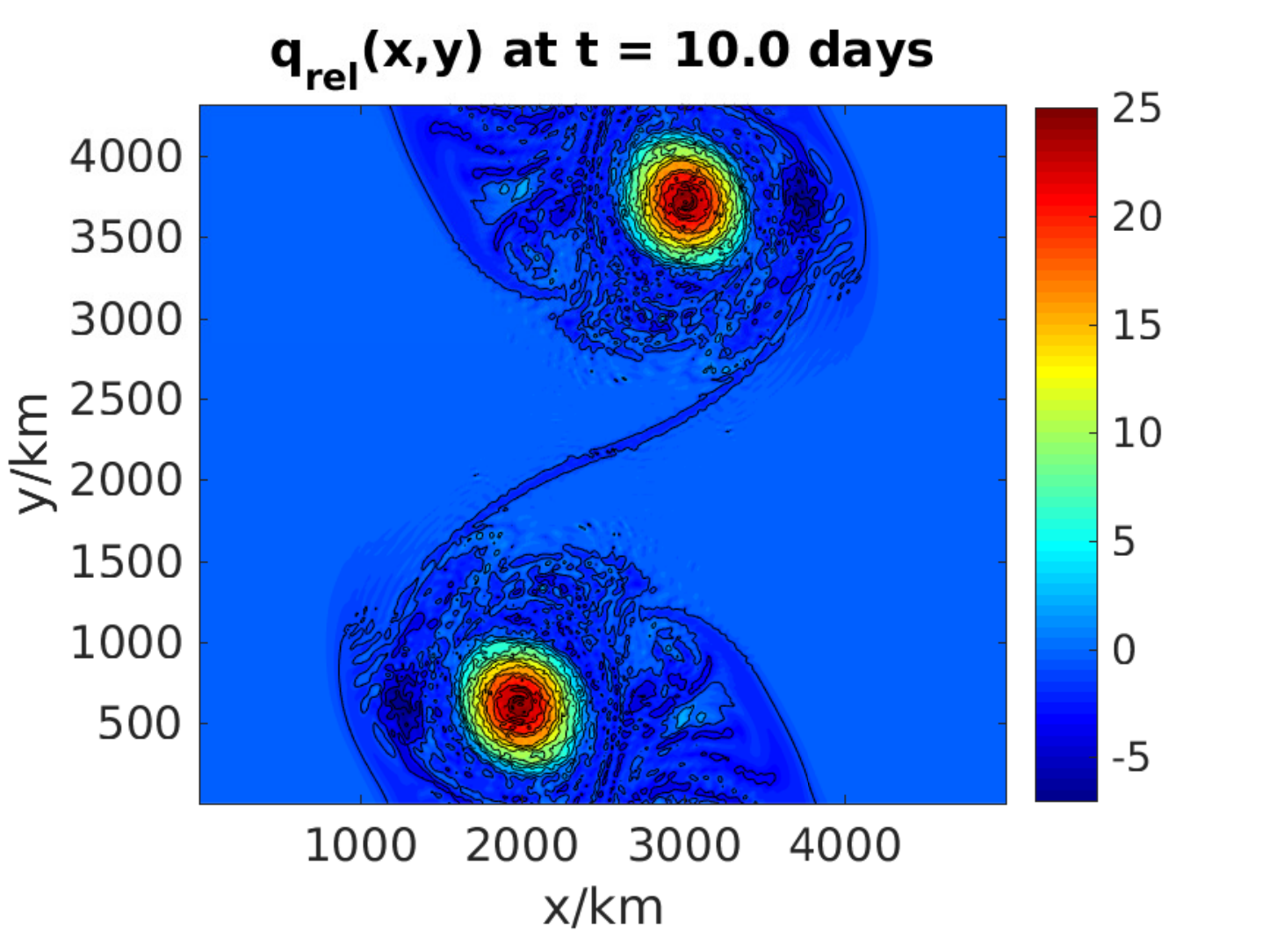}}  &
  \hspace*{-.5cm}{\includegraphics[scale=0.35]{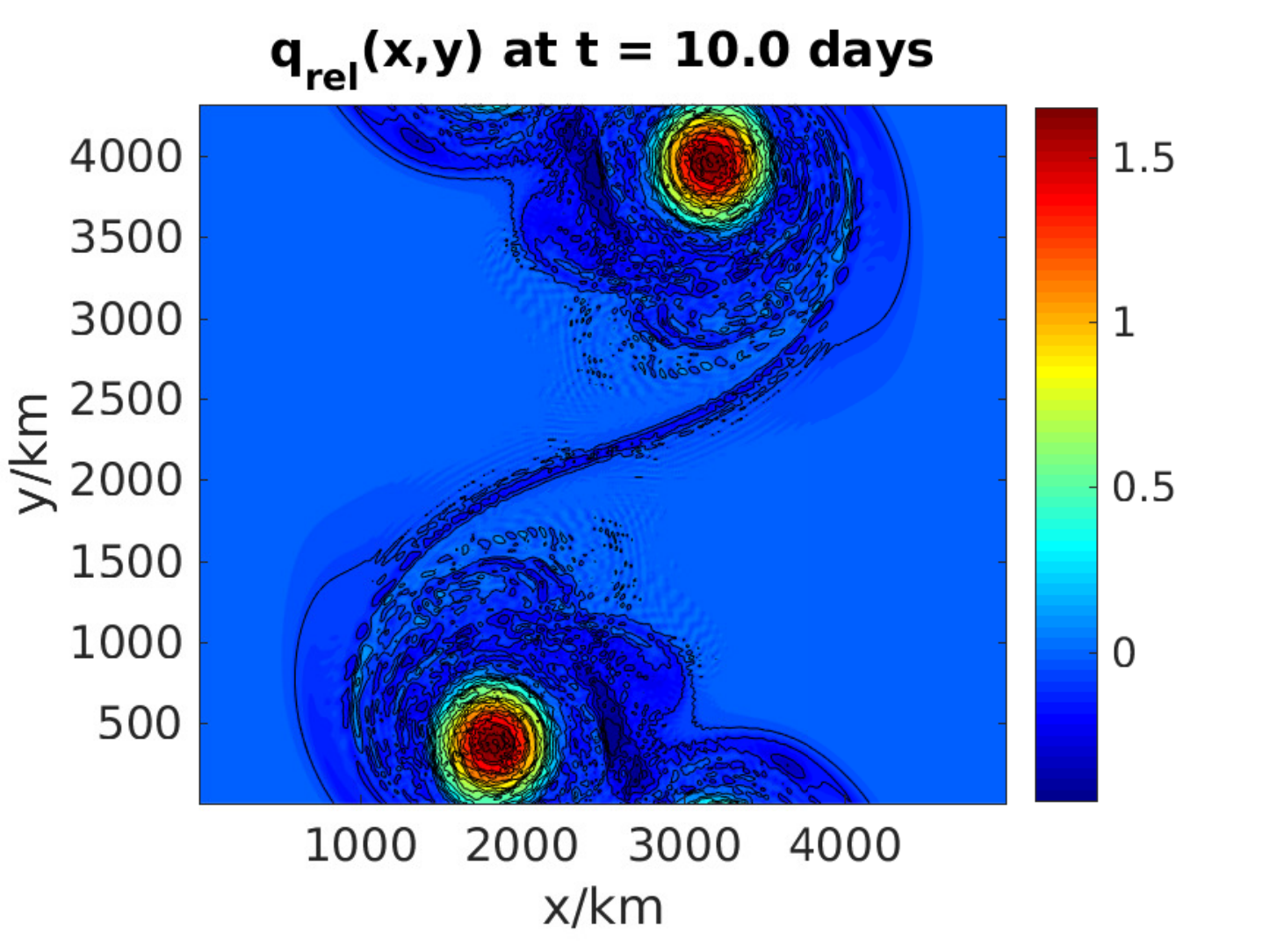}}  \\
  \hspace*{-.35cm}{\includegraphics[scale=0.35]{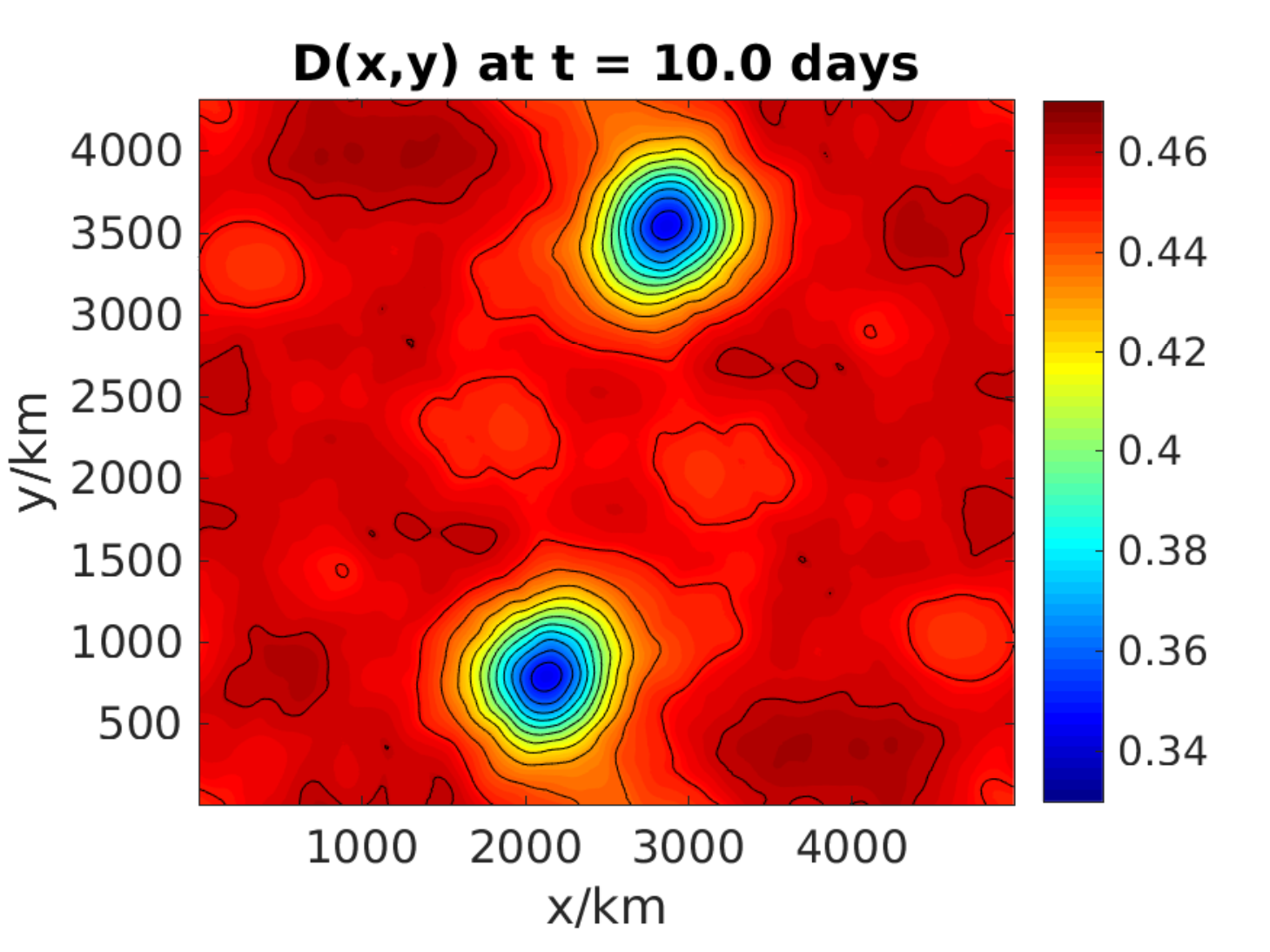}} &  
  \hspace*{-.5cm}{\includegraphics[scale=0.35]{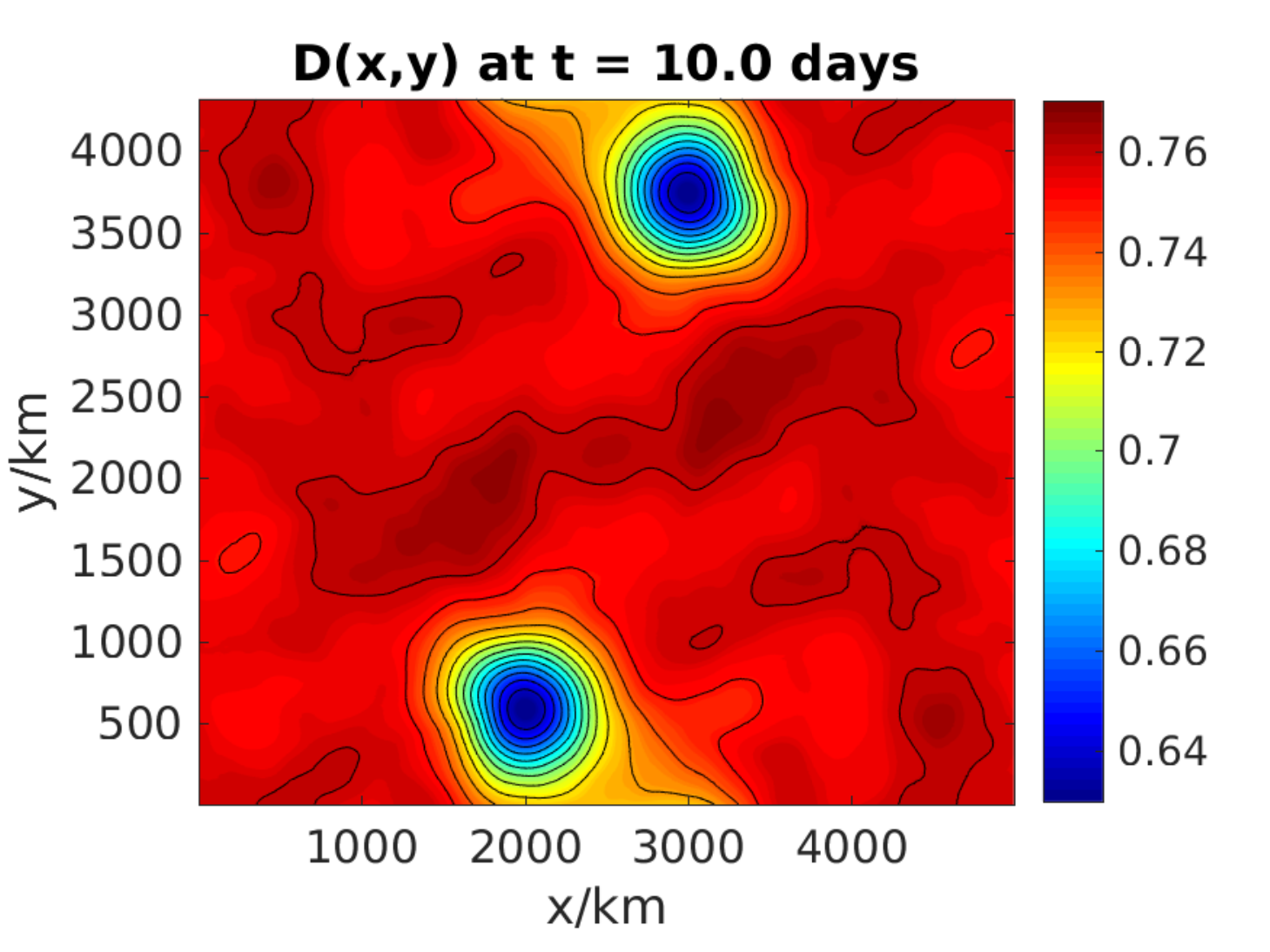}} &   
  \hspace*{-.5cm}{\includegraphics[scale=0.35]{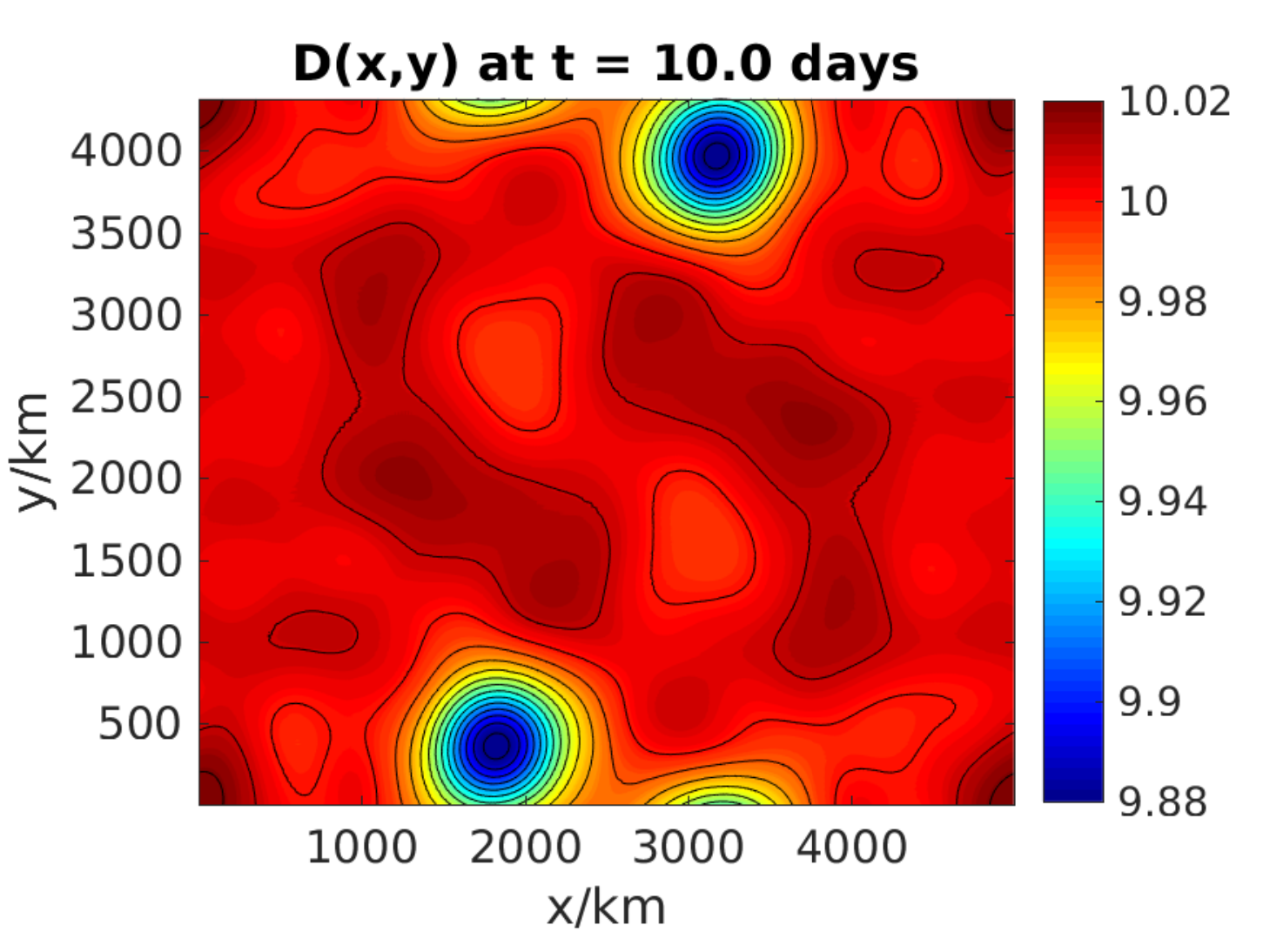}} 
 \end{tabular}
   \caption{Comparison of $q_{\rm rel}$ and $D$ for fluids in semi-geostrophic (left), quasi-geostrophic (middle), 
  and incompressible regimes (right) for a regular mesh with $2 \cdot 256^2$ cells.  
  Contours for $D$ between $-0.12\,{\rm km} +H_0$ and $0.02\,{\rm km} + H_0$ with interval of 
  $0.01\,{\rm km}$. Contours for $q_{\rm rel}$; 
  left: between $-13\,{\rm days^{-1}km^{-1}}$ and $50\,{\rm days^{-1}km^{-1}}$ with interval of $3\,{\rm days^{-1}km^{-1}}$;
  middle: between $-7\,{\rm days^{-1}km^{-1}}$ and $25\,{\rm days^{-1}km^{-1}}$ with interval of $2\,{\rm days^{-1}km^{-1}}$;
  right: between $-0.45\,{\rm days^{-1}km^{-1}}$ and $1.7\,{\rm days^{-1}km^{-1}}$ with interval of $0.1\,{\rm days^{-1}km^{-1}}$.
  }                                                                                             
  \label{fig_Z_dual_Hall_vortices}
 \end{figure}

 \begin{figure}[h!]\centering
 \begin{tabular}{cccccc}  
  \hspace*{-.075cm}{\includegraphics[scale=0.38]{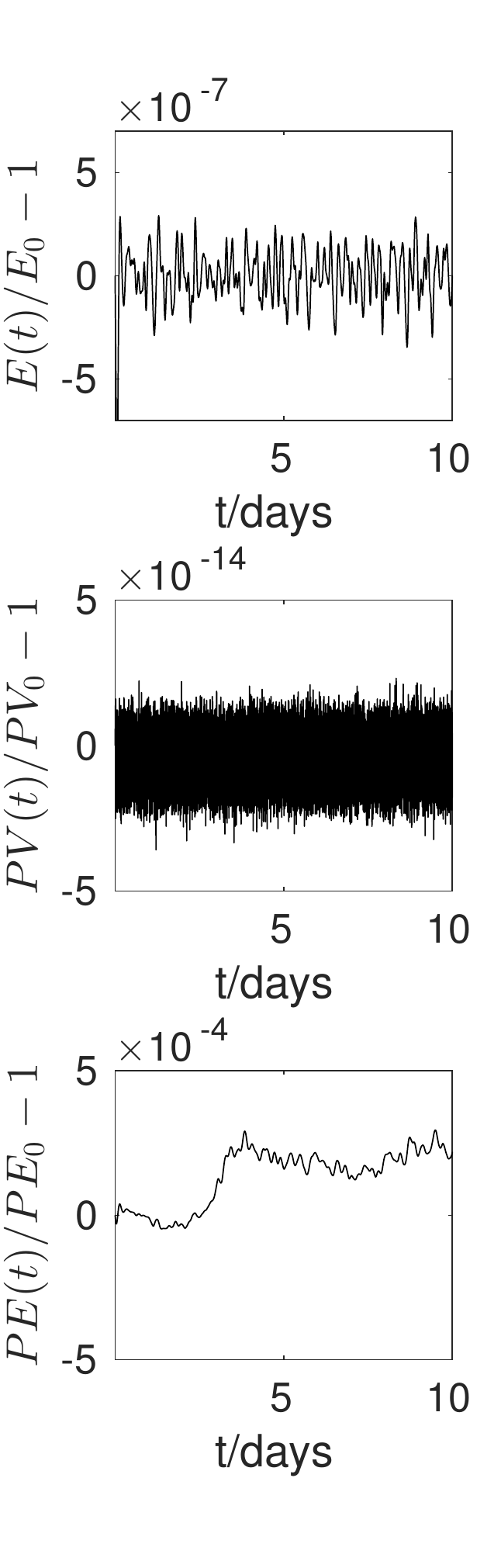}}  & 
  \hspace*{-.075cm}{\includegraphics[scale=0.38]{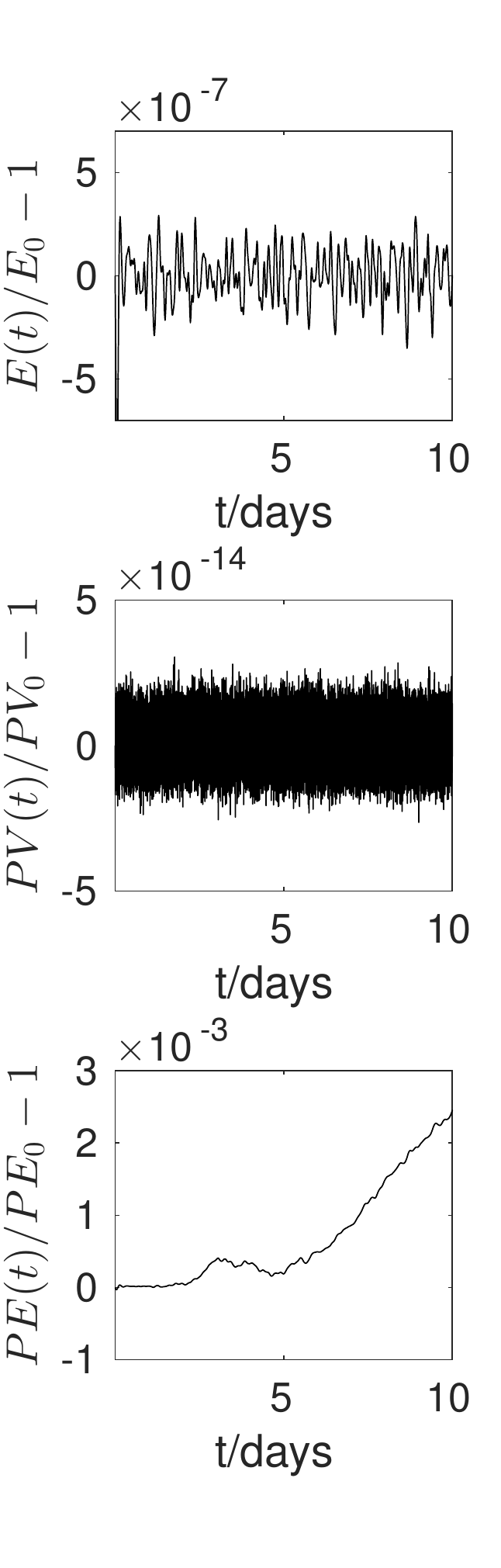}}  &
  \hspace*{-.075cm}{\includegraphics[scale=0.38]{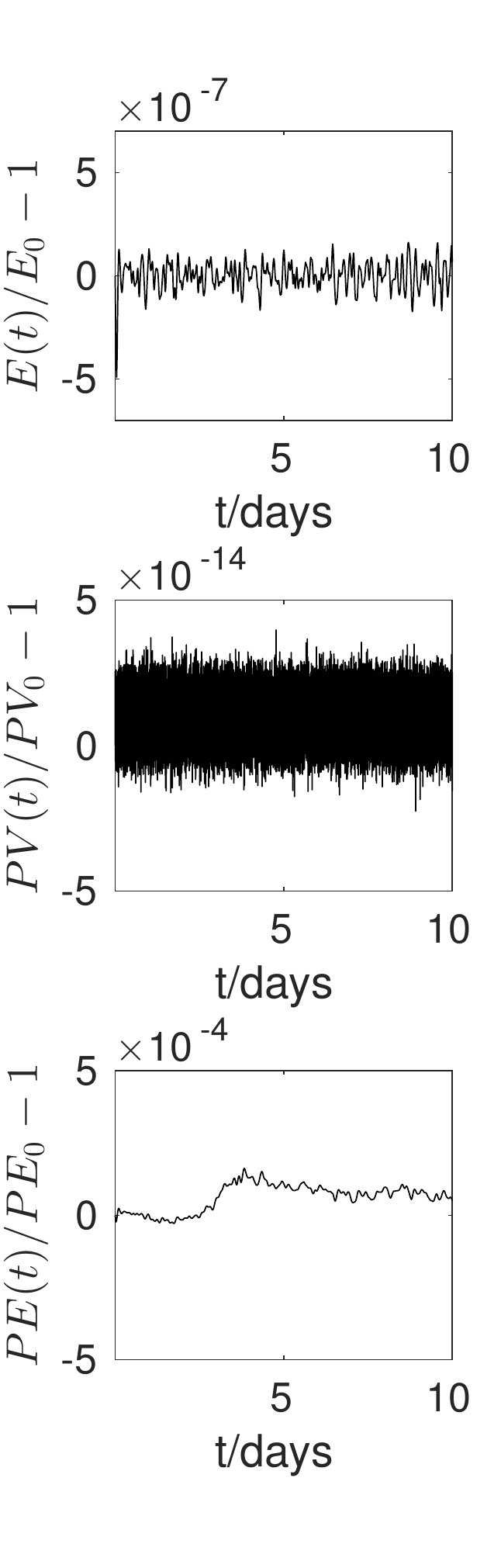}}  & 
  \hspace*{-.075cm}{\includegraphics[scale=0.38]{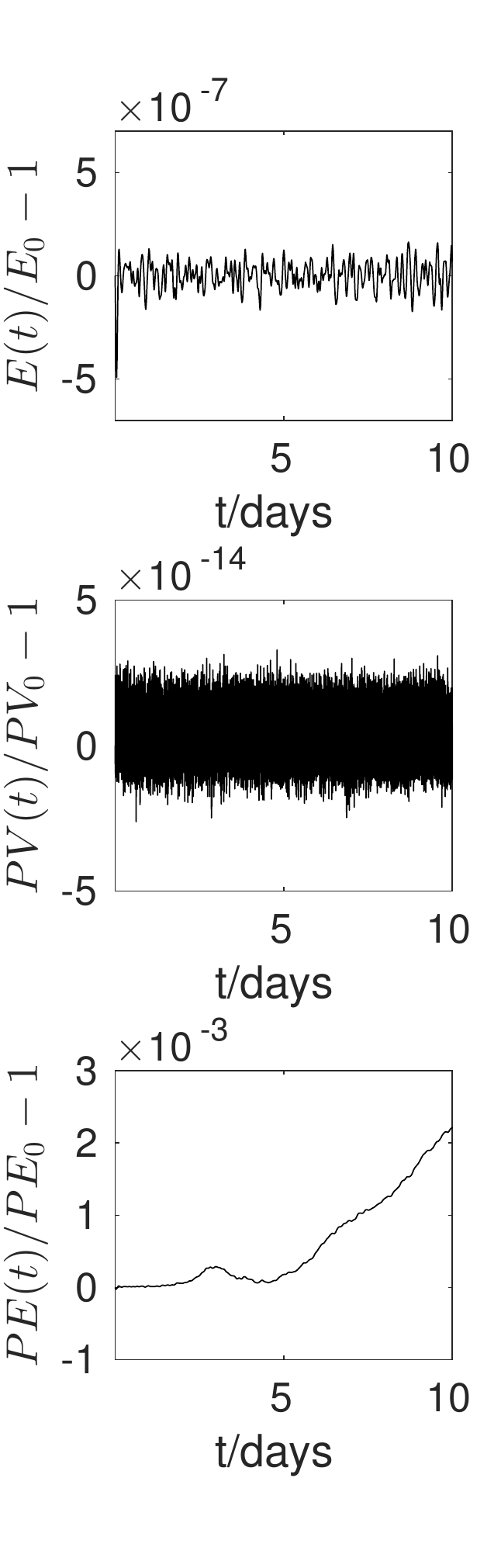}}  &
  \hspace*{-.075cm}{\includegraphics[scale=0.38]{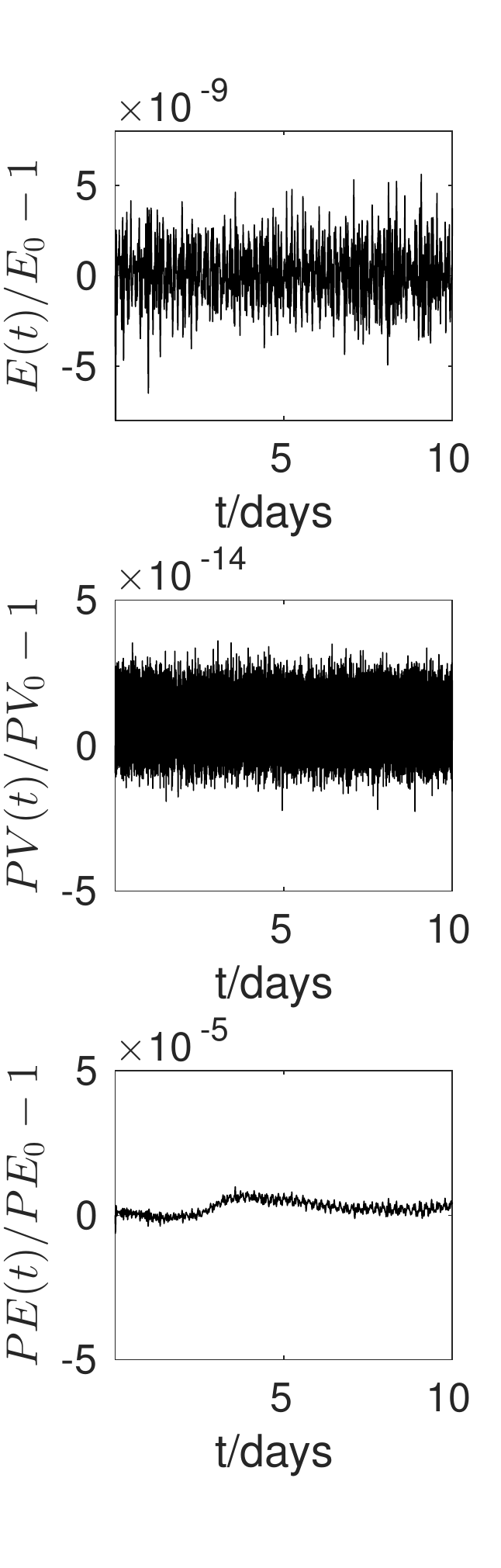}}  & 
  \hspace*{-.075cm}{\includegraphics[scale=0.38]{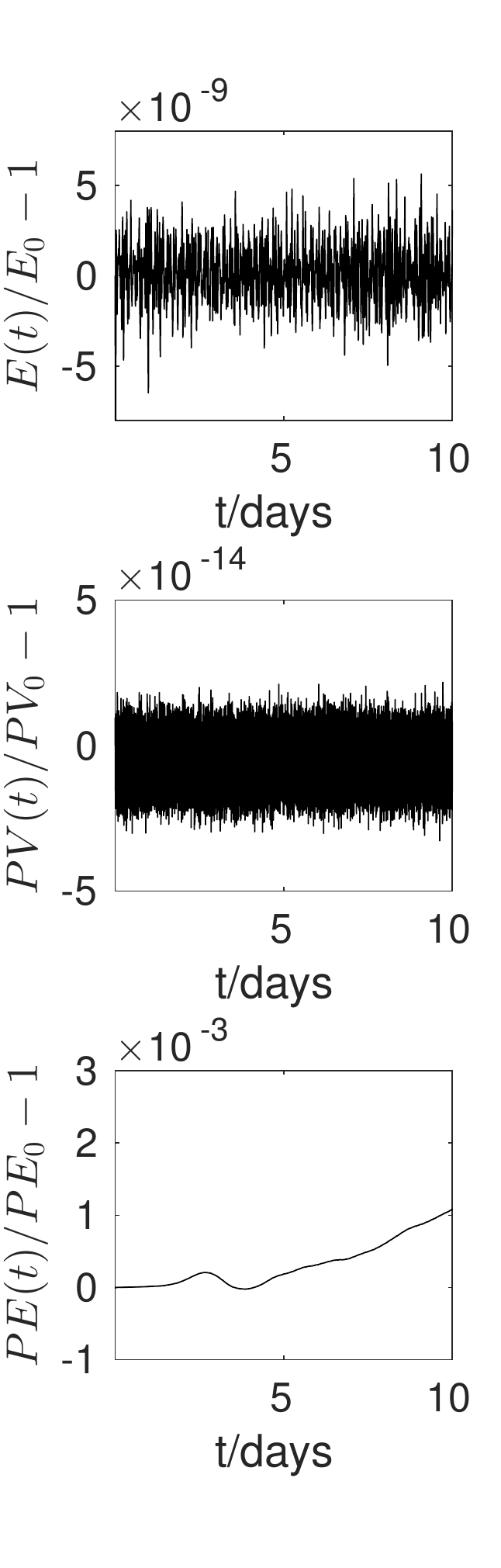}}  
 \end{tabular}
  \caption{Vortex interaction test case: relative errors of $E(t)$ (upper row), of 
  $PV(t)$ (middle row), and $PE(t)$ (lower row) for a fluid in semi-geostrophic ($1^{st},2^{nd}$ column), 
  in quasi-geostrophic ($3^{rd},4^{th}$ column), and in incompressible ($5^{th},6^{th}$ column) regime
  for regular ($1^{st},3^{rd},5^{th}$ column) and irregular ($2^{nd},4^{th},6^{th}$ column) meshes with 
  $2 \cdot 256^2$ cells. 
  }                                                                                             
  \label{fig_dual_qoi}
 \end{figure}

 \subsubsection{Shear flow in semi-geostrophic regime}
 \label{sec_tc_shear}
 
 In the second test case with dominantly nonlinear effects, we study 
 the evolution of a shear flow in quasi-geostrophic regime. The flow is initialized
 so that it is in unstable equilibrium. During the flow evolution, perturbations 
 that are superimposed on an initial zonal jet in $x$-direction (cf. Fig.~\ref{fig_Z_shear_initials}) 
 evolve towards two pairs of counter-rotating vortices (cf. Fig.~\ref{fig_Z_shear_dynamics}). 
 As the development and growth rate of this instabilities essentially depend on nonlinear 
 effects, we thus evaluate the accurate representation of these effects by the nonlinear terms of 
 the variational RSW scheme.
 
 \medskip
 
 \paragraph{Initialization.}
  An initialization according to \cite{RestelliHundermark2009} will allow us 
  also for this test case a direct comparisons of the corresponding numerical results. 
  Hence, we initialize the shear flow in quasi-geostrophic regime by the following 
  fluid depth $h$ while enforcing the geostrophic balance. There follows
  \begin{equation}
  h(x,y,0)  = H_0 - H' \frac{y''}{\sigma_y}e^{-\frac{y'^2}{2\sigma_y^2} +\frac{1}{2}} 
		      \left( 1+ \kappa \sin \left( \frac{2\pi x'}{\lambda_x}  \right)  \right)\ ,
  \end{equation}
  \begin{equation}
  \begin{split}
  & u(x,y,0) =  \frac{ g H'}{f \sigma_y L_y } \left ( c(y) -  \frac{y''^2}{\sigma_y^2} \right )e^{-\frac{y'^2}{2\sigma_y^2} +\frac{1}{2}} 
		      \left( 1+ \kappa \sin \left( \frac{2\pi x'}{\lambda_x}  \right)  \right)\ , \\
  & v(x,y,0) = - \frac{ g H'}{f L_x } \frac{2\pi \kappa}{\lambda_x} \frac{y''}{\sigma_y} e^{-\frac{y'^2}{2\sigma_y^2} +\frac{1}{2}} \cos \left( \frac{2\pi x'}{\lambda_x}  \right)
  \end{split}
  \end{equation}
  using the definitions $ c(y) = \cos\left( \frac{2\pi}{L_y}\left(y - \frac{L_y}{2}\right)  \right)$ 
  and 
  \begin{equation}
  x' = \frac{x}{L_x} , \quad
  y' = \frac{1}{\pi} \sin  \left( \frac{\pi}{L_y } \left(y - \frac{L_y}{2}  \right)  \right) , \quad
  y'' = \frac{1}{2\pi} \sin  \left( \frac{2\pi}{L_y } \left(y - \frac{L_y}{2}  \right)  \right) ,
  \end{equation}
  for the parameters $\lambda_x = \frac{1}{2}$, $\sigma_y = \frac{1}{12}$, $\kappa = 0.1 $.
  To obtain an inviscid flow in quasi-geostrophic (compressible) flow regime with ${\rm Bu}\approx 1$, 
  we choose $H_0 = 1076\,$km and $H'=30\, $m, cf. \cite{RestelliHundermark2009}. 
  We set $B({\bf x}) =0$.
  The analytic fields are mapped to the mesh exactly as in Sect.~\ref{sec_tc_dual}.
  The corresponding initial fields are shown in Fig.~\ref{fig_Z_shear_initials}.

  \textcolor{black}{For the shear flow simulations, we apply a time step of $\Delta t = 36\,\rm s$ 
  and use regular and irregular meshes with $2\cdot 256^2$ cells and $\Delta x_{min} = 1.313\,$km. 
  For the water depth of $H_0= 1.0760\,$km, the Courant number here is $C = 2.82$. 
  }
 
  \begin{figure}[t!]  \centering	
  \begin{tabular}{cc}  
      \hspace*{-0cm}{\includegraphics[scale=0.55]{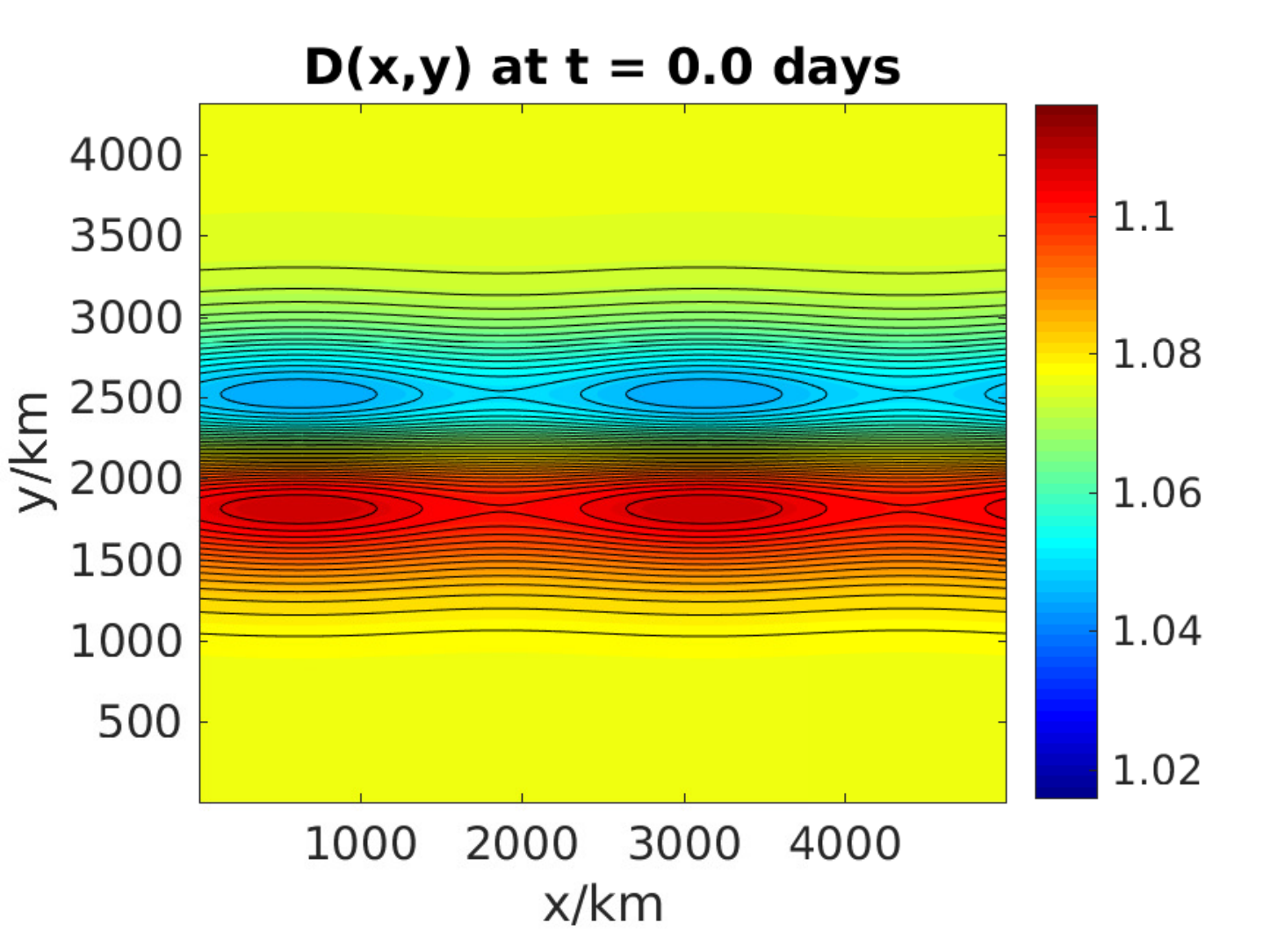}} &
      \hspace*{-0cm}{\includegraphics[scale=0.55]{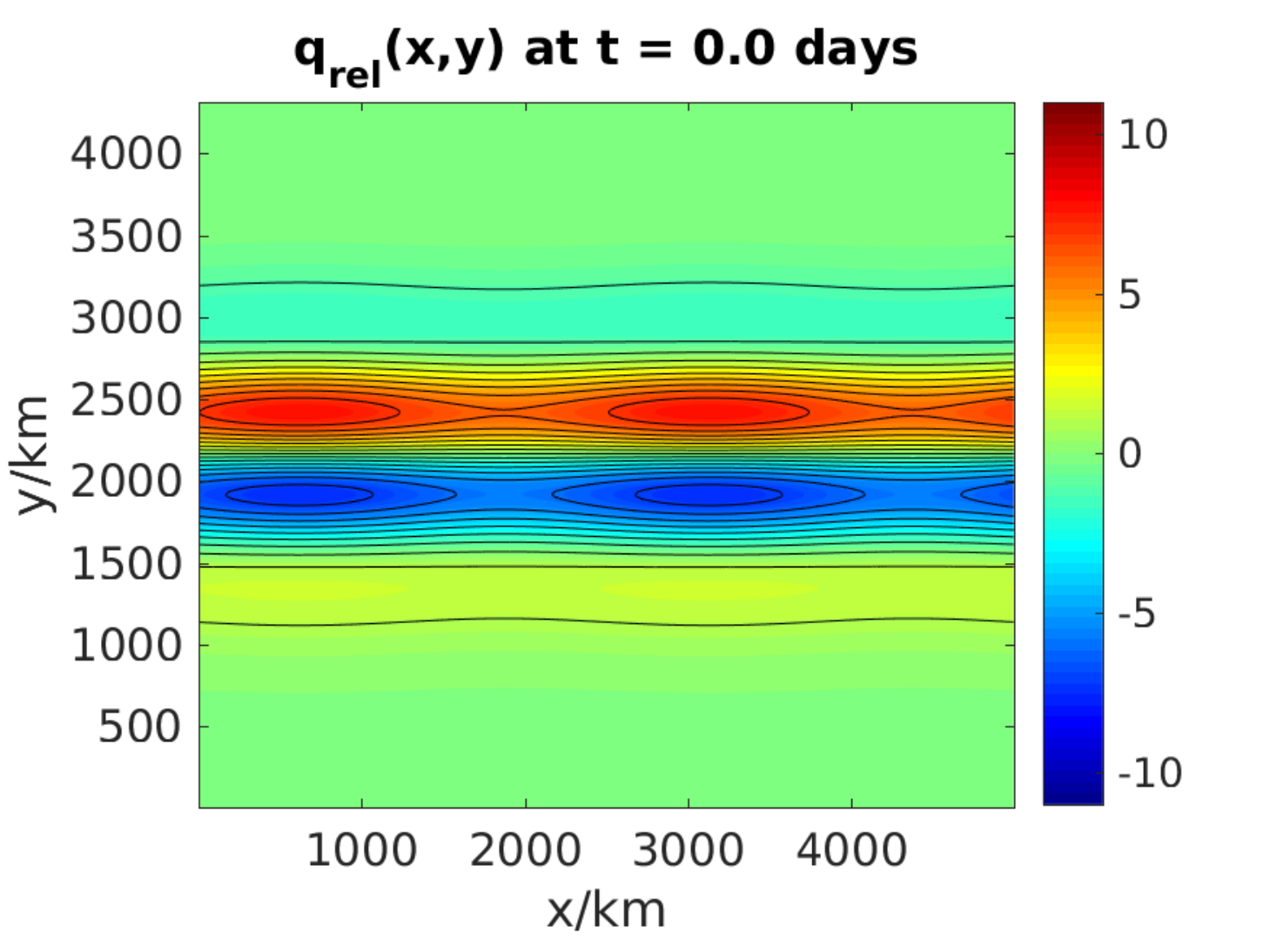}} 
  \end{tabular}
    \caption{Initial fields of fluid depth (left) and relative potential vorticity (right) 
             in geostrophic balance for the shear flow test case on a regular mesh with $N = 2 \cdot 256^2$ cells. 
            Contours for $q_{\rm rel}$ between $-11\,{\rm days^{-1}km^{-1}}$ and $11\,{\rm days^{-1}km^{-1}}$ with interval of $1\,{\rm days^{-1}km^{-1}}$,
            and for $D$ between $-0.06\,{\rm km} +H_0$ and $0.04\,{\rm km} + H_0$ with interval of $0.002\,{\rm km}$.
           }                                                                                             
  \label{fig_Z_shear_initials}
  \end{figure}

  \bigskip
  \paragraph{Results.}  
  Fig.~\ref{fig_Z_shear_dynamics} shows the time evolution of the shear flow 
  in unstable equilibrium. The initial perturbations grow within the 
  first three days to an instability with a dominant wave number of two. This
  instability develops further to two pairs of counter-rotating vortices 
  that are well developed at about day six. The filaments between these vortex 
  cores get thinner for later times and reach scales beyond spatial resolution, 
  which causes the noisy pattern visible at day 10.
  
  A comparison of the snapshots of Fig.~\ref{fig_Z_shear_dynamics} for regular (upper row)
  and irregular meshes (lower row) confirms that (i) the scheme is capable to produce 
  accurate solutions even on very deformed, irregular meshes and that (ii) these solutions
  agree well with literature, i.e. with a triangular \cite{RestelliHundermark2009} and 
  a hexagonal \cite{BauerPHD2013} C-grid discretization of the shallow water equations. 
  In more detail, all schemes' solutions for $q_{\rm rel}$ and $D$ provide very similar 
  general flow pattern -- compared at days 3, 6, and 10 -- 
  consisting in the number of vortex pairs, their magnitudes and positions, 
  and the structure of the thin filaments.

  Fig.~\ref{fig_diag_shearflow} shows the variational integrator's relative errors in total energy (upper row), mass-weighted potential vorticity $PV$ (middle row), and potential enstrophy $PE$ (lower row) for the 
  shear flow simulations in quasi-geostrophic regime performed on a regular (left column) and an irregular (right column) grid.
  For both mesh types, the variational integrator conserves the total energy at the order of $10^{-7}$
  for a time step size of $\Delta t = 36\,$s. As above, the errors decreases at $1^{st}$-order rate for 
  smaller $\Delta t$ and are independent from the spatial resolution. 
  Again, mass (not shown) and $PV$ are preserved at the order of machine precision, 
  independently from the chosen mesh type or from the spatial and temporal resolutions. 
  Also the errors in $PE$ do not significantly depend on this choice and remain preserved at the order of $10^{-3}$. 
  Here in case of the shear flow, we do not observe a growth of $PE$ in case of irregular meshes 
  unlike the vortex interaction test case.

  \begin{figure}[t!]\centering 
  \begin{tabular}{ccc}   
      \hspace*{-.35cm} {\includegraphics[scale=0.38]{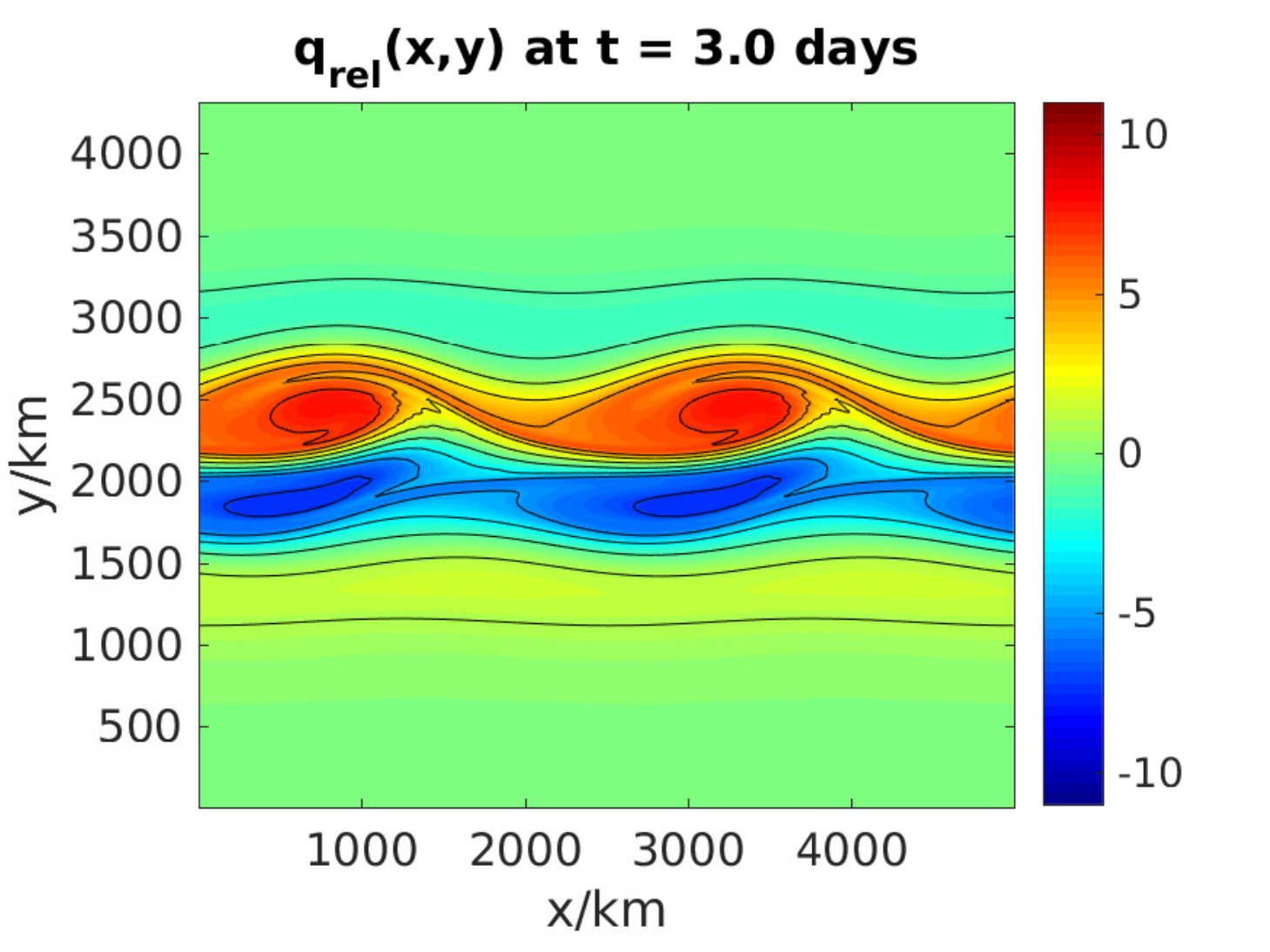}} &
      \hspace*{-.55cm} {\includegraphics[scale=0.38]{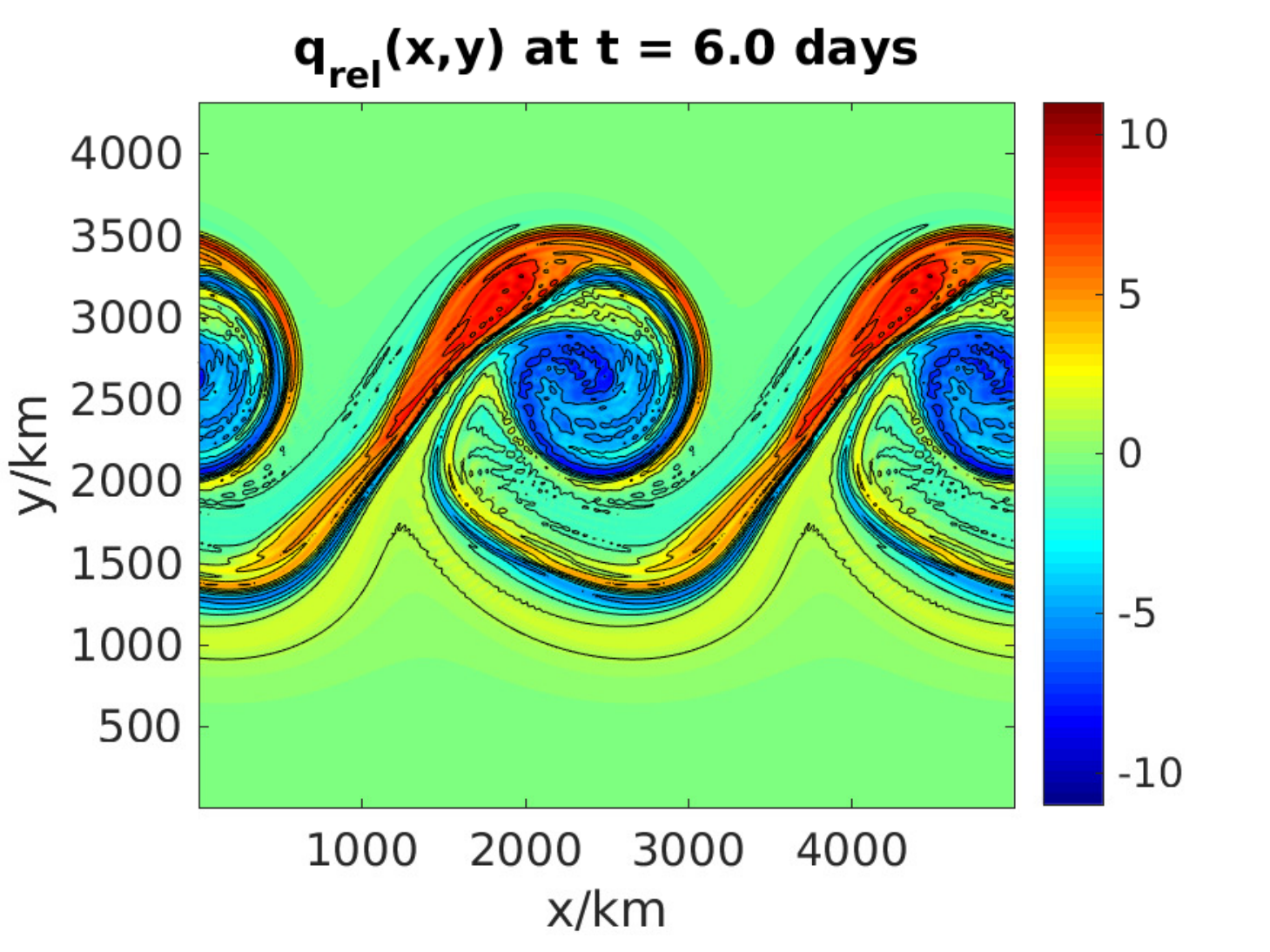}} &
      \hspace*{-.55cm} {\includegraphics[scale=0.38]{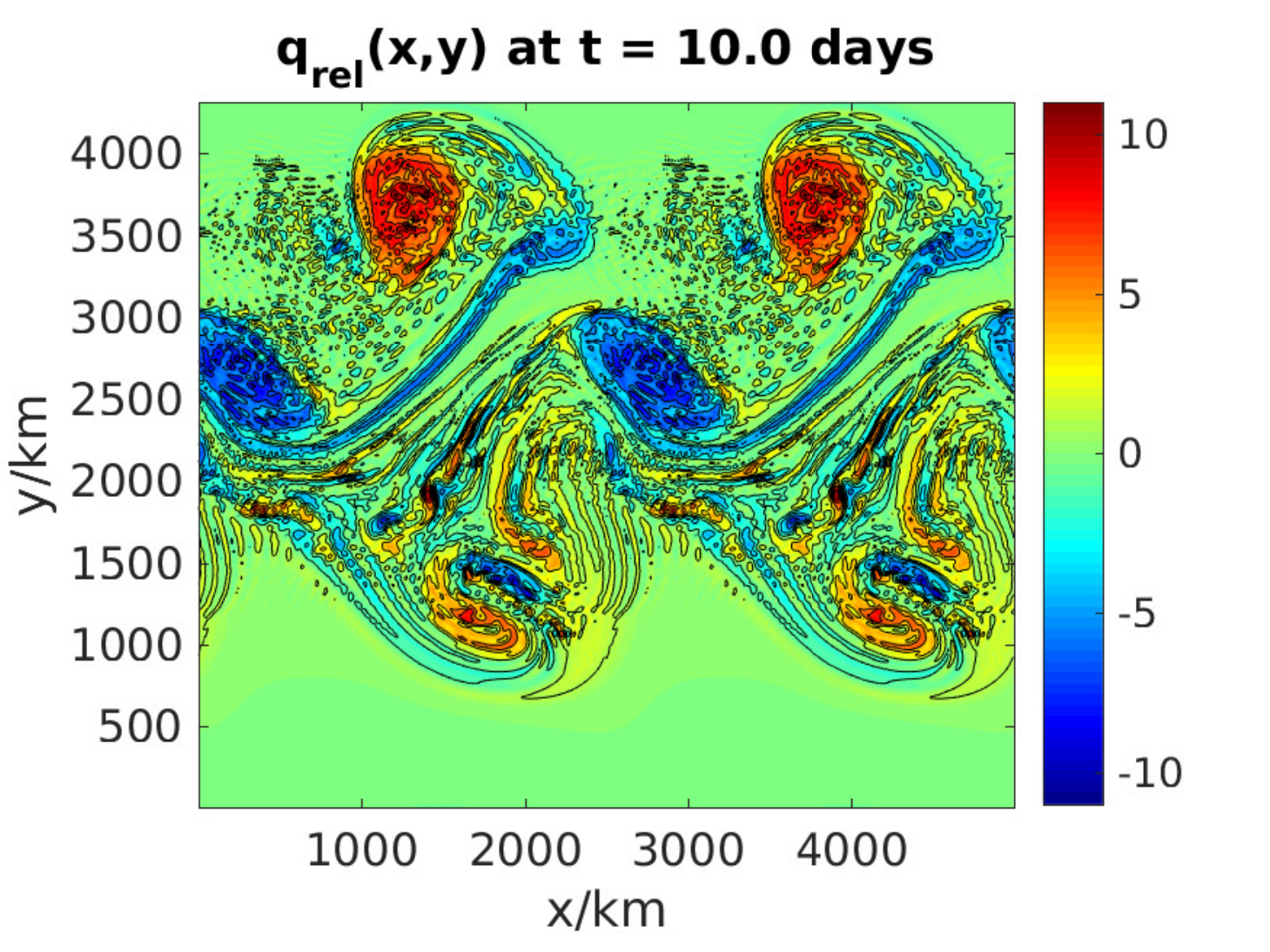}}  \\
      \hspace*{-.35cm} {\includegraphics[scale=0.38]{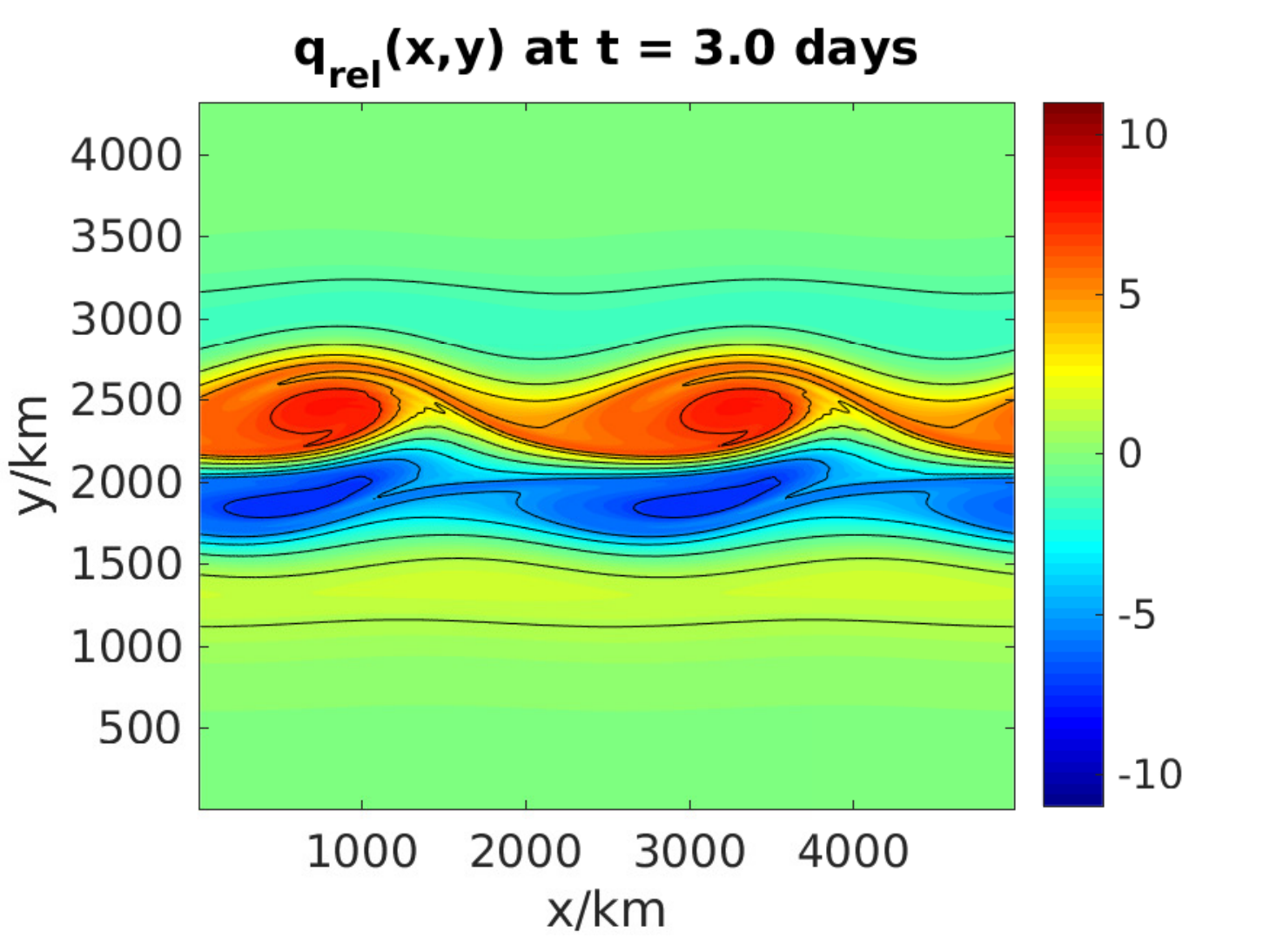}}  &
      \hspace*{-.55cm} {\includegraphics[scale=0.38]{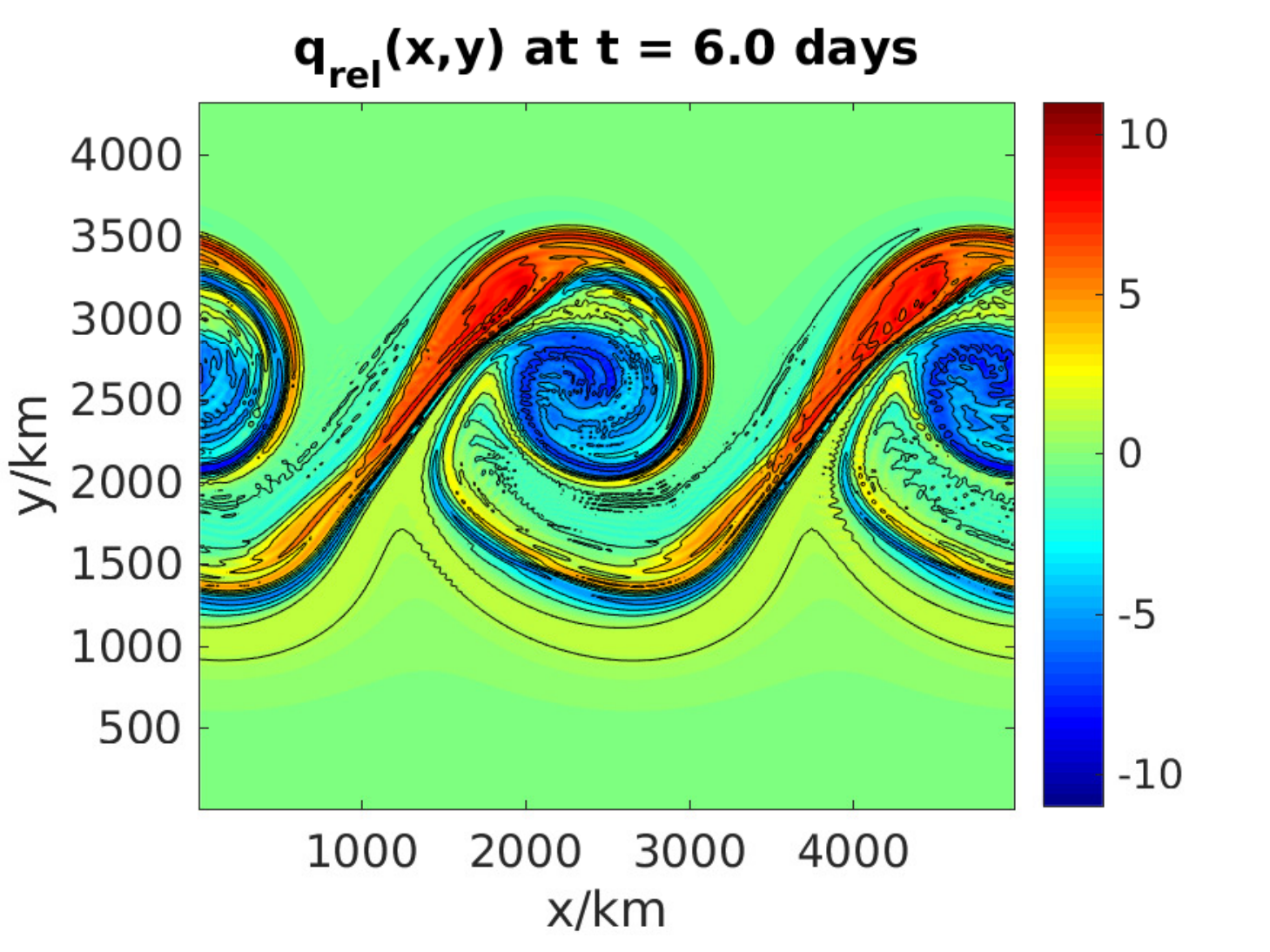}} &
      \hspace*{-.55cm} {\includegraphics[scale=0.38]{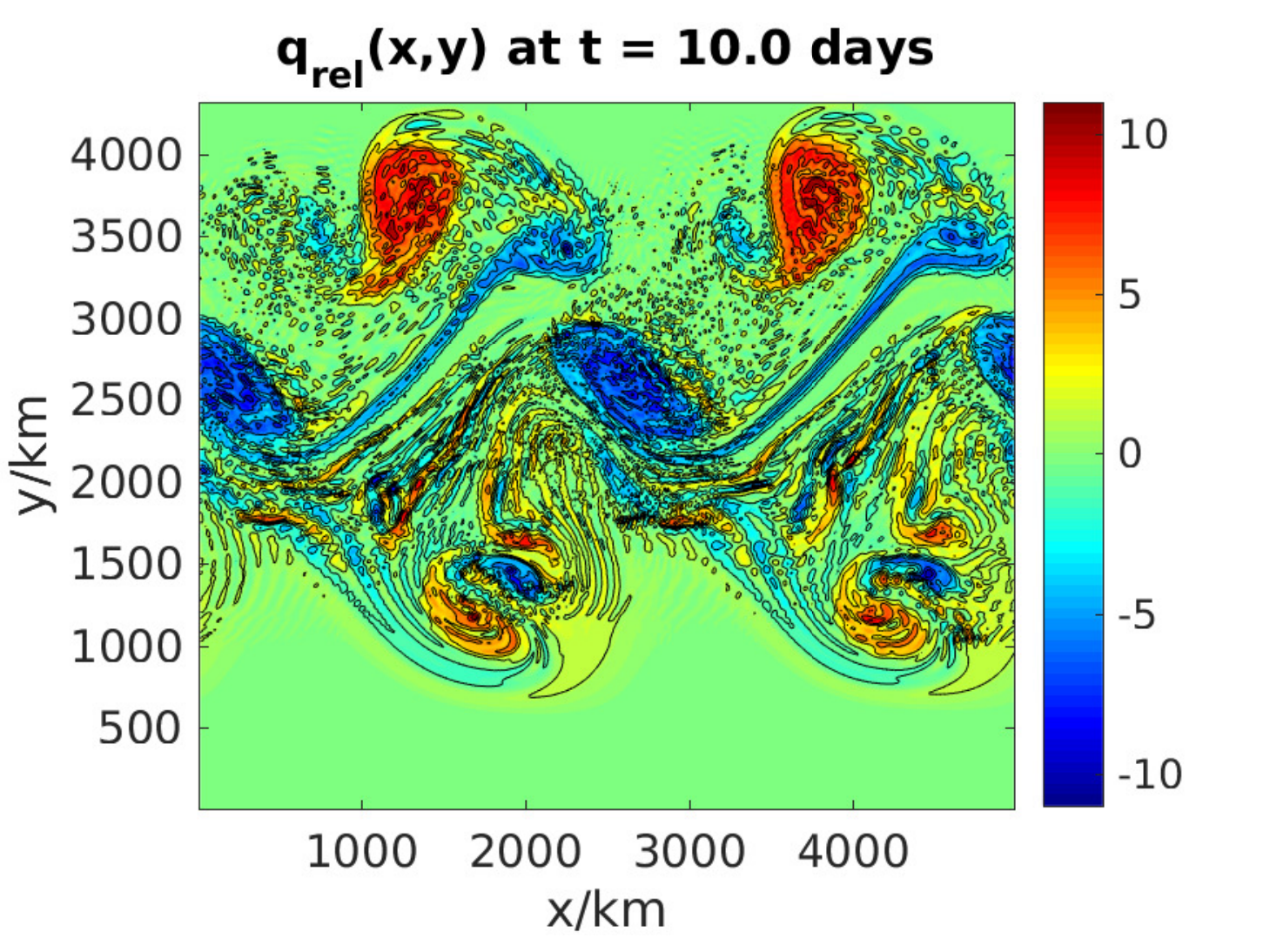}}  
      
  \end{tabular}
    \caption{Shear flow test case: snapshots of relative potential vorticity $q_{\rm rel}$
    on regular (upper row) and irregular (lower row)
    mesh with $2\cdot 256^2$ cells. Contours between $-11\,{\rm days^{-1}km^{-1}}$ and $11\,{\rm days^{-1}km^{-1}}$ 
    with interval of $2\,{\rm days^{-1}km^{-1}}$.}                                                                                             
  \label{fig_Z_shear_dynamics}
  \end{figure}
  
   \begin{figure}[t!]\centering
  \begin{tabular}{ccc} 
      \hspace*{-.25cm}{\includegraphics[scale=0.38]{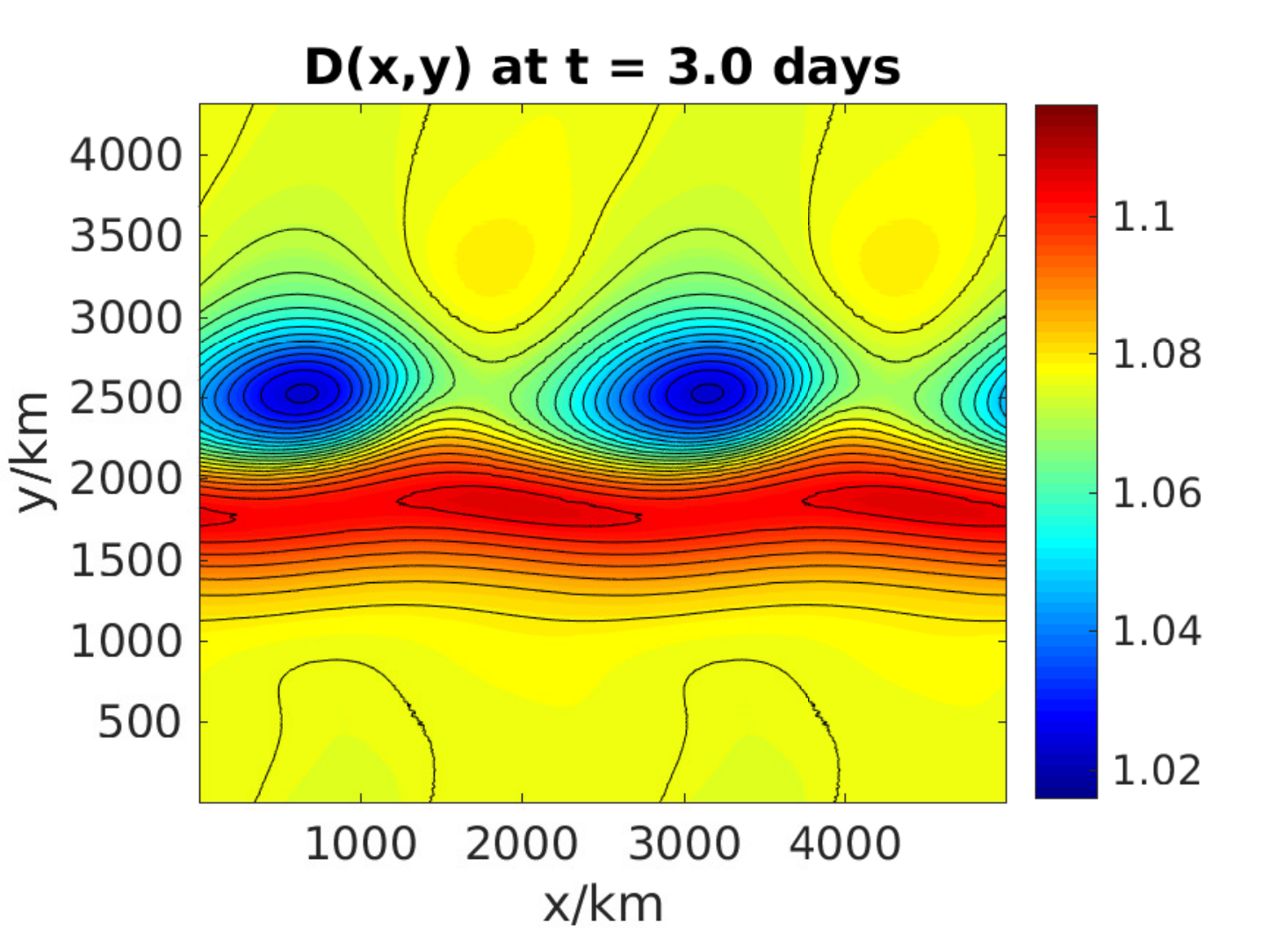}} &
      \hspace*{-.35cm}{\includegraphics[scale=0.38]{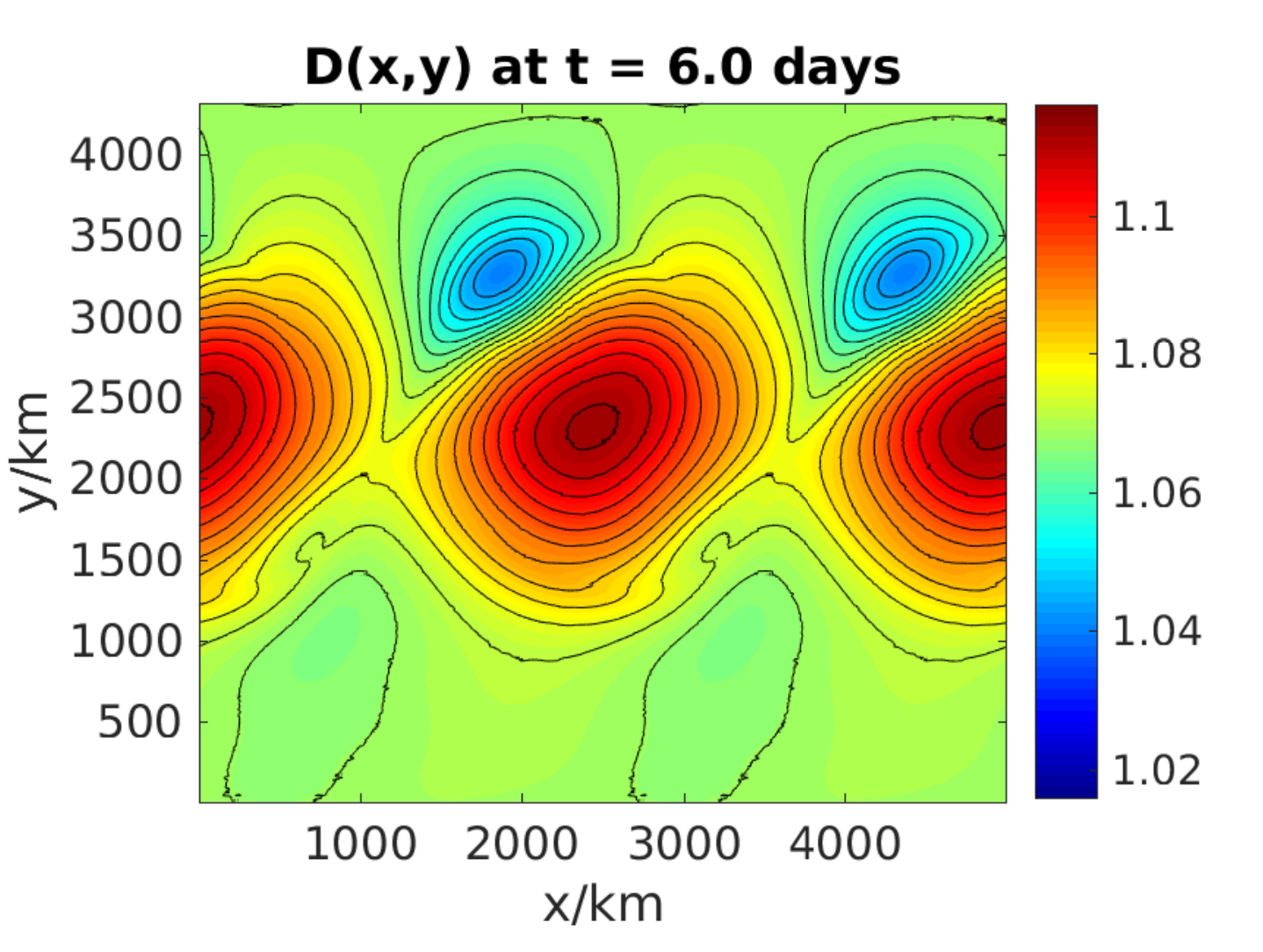}} &
      \hspace*{-.35cm}{\includegraphics[scale=0.38]{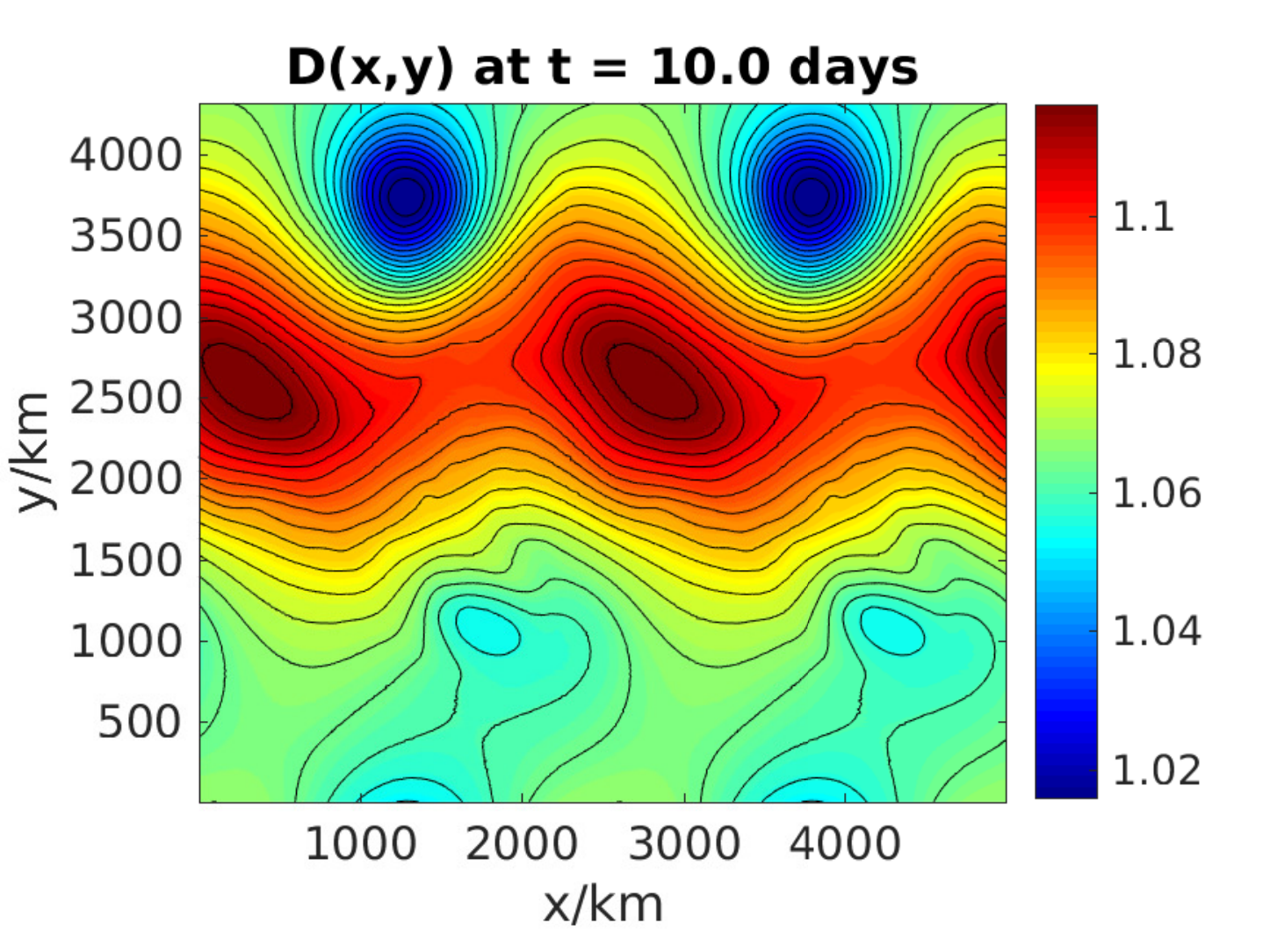}} \\
      \hspace*{-.25cm}{\includegraphics[scale=0.38]{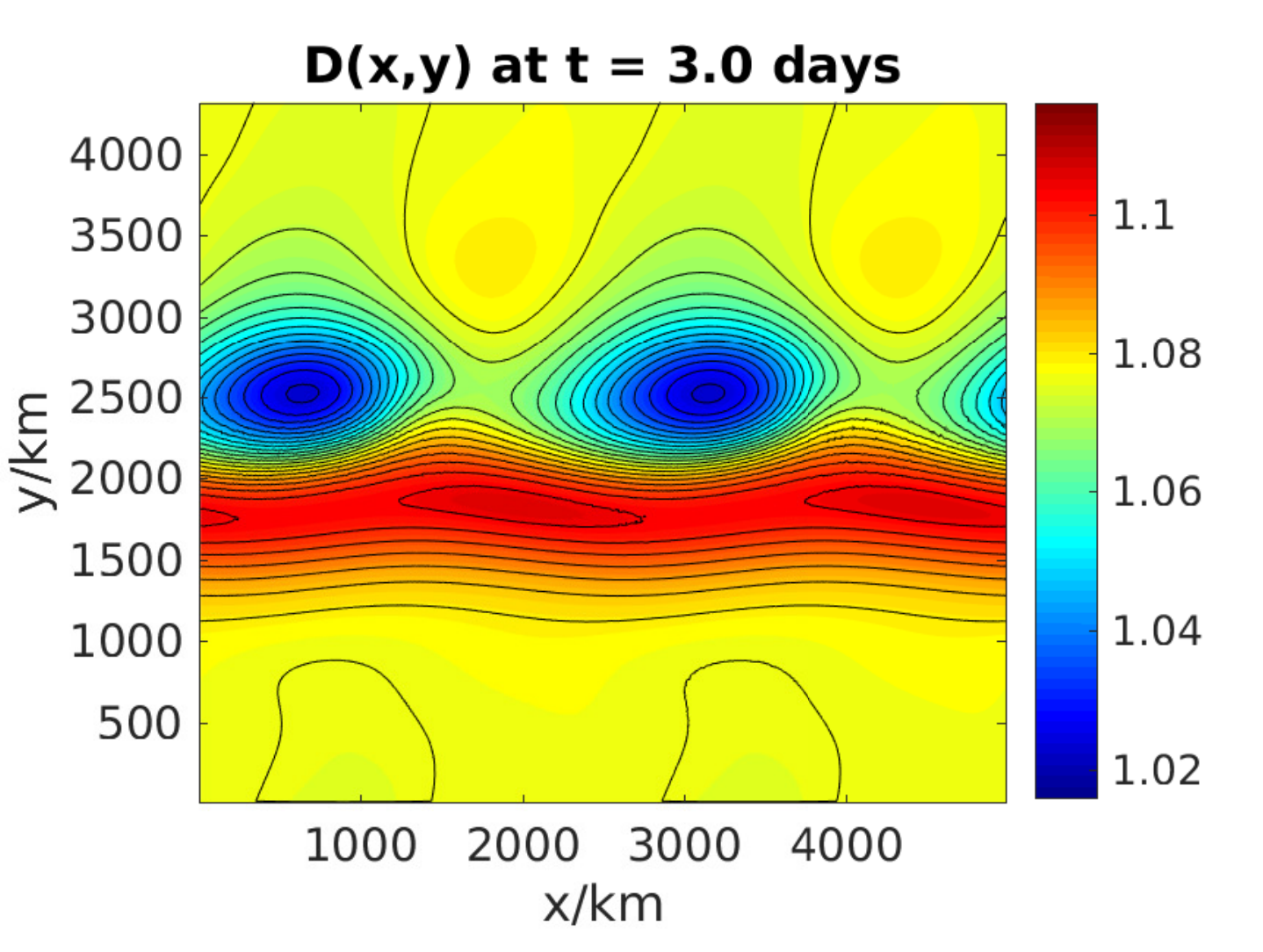}}    &
      \hspace*{-.35cm}{\includegraphics[scale=0.38]{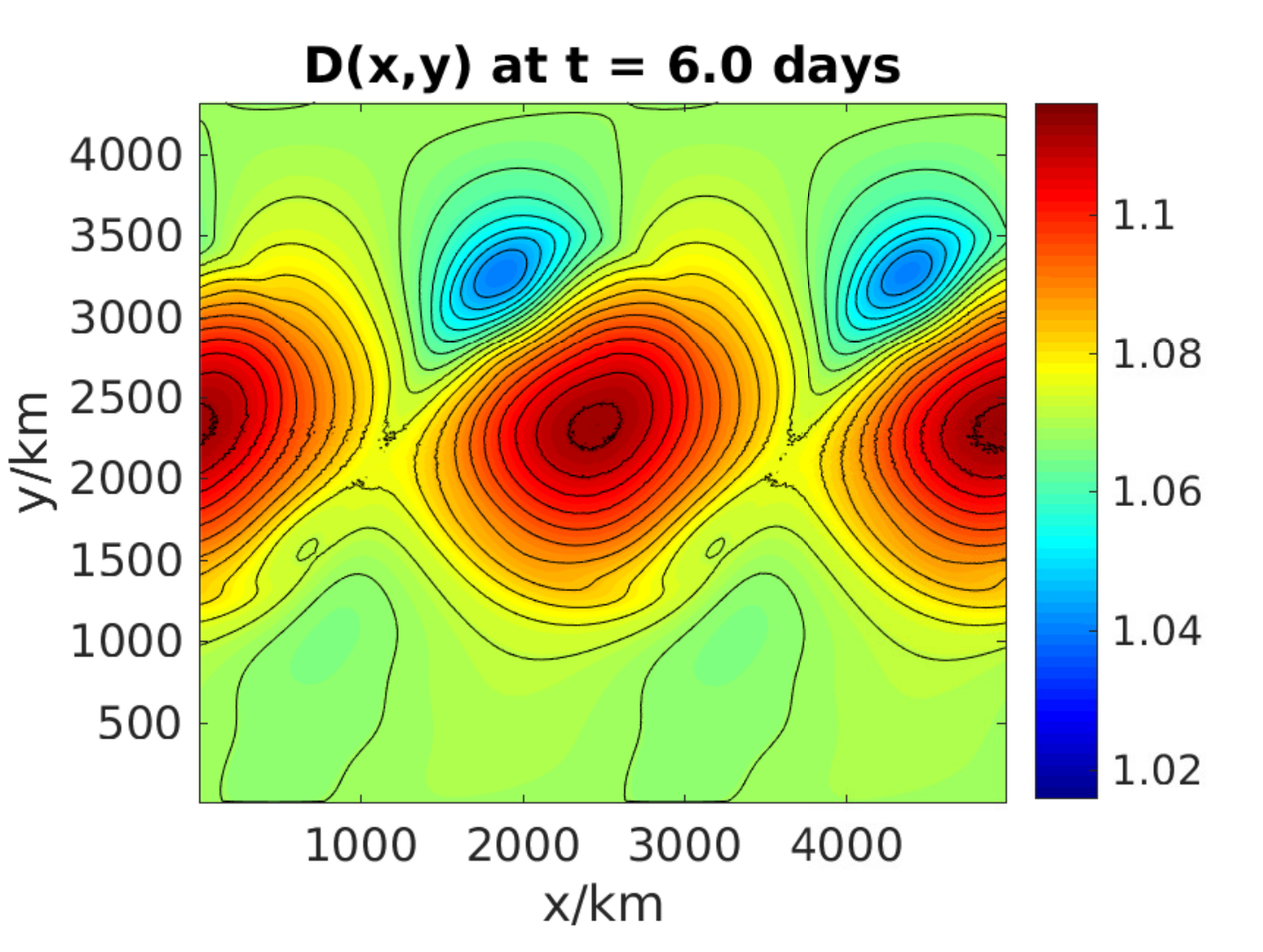}} &
      \hspace*{-.35cm}{\includegraphics[scale=0.38]{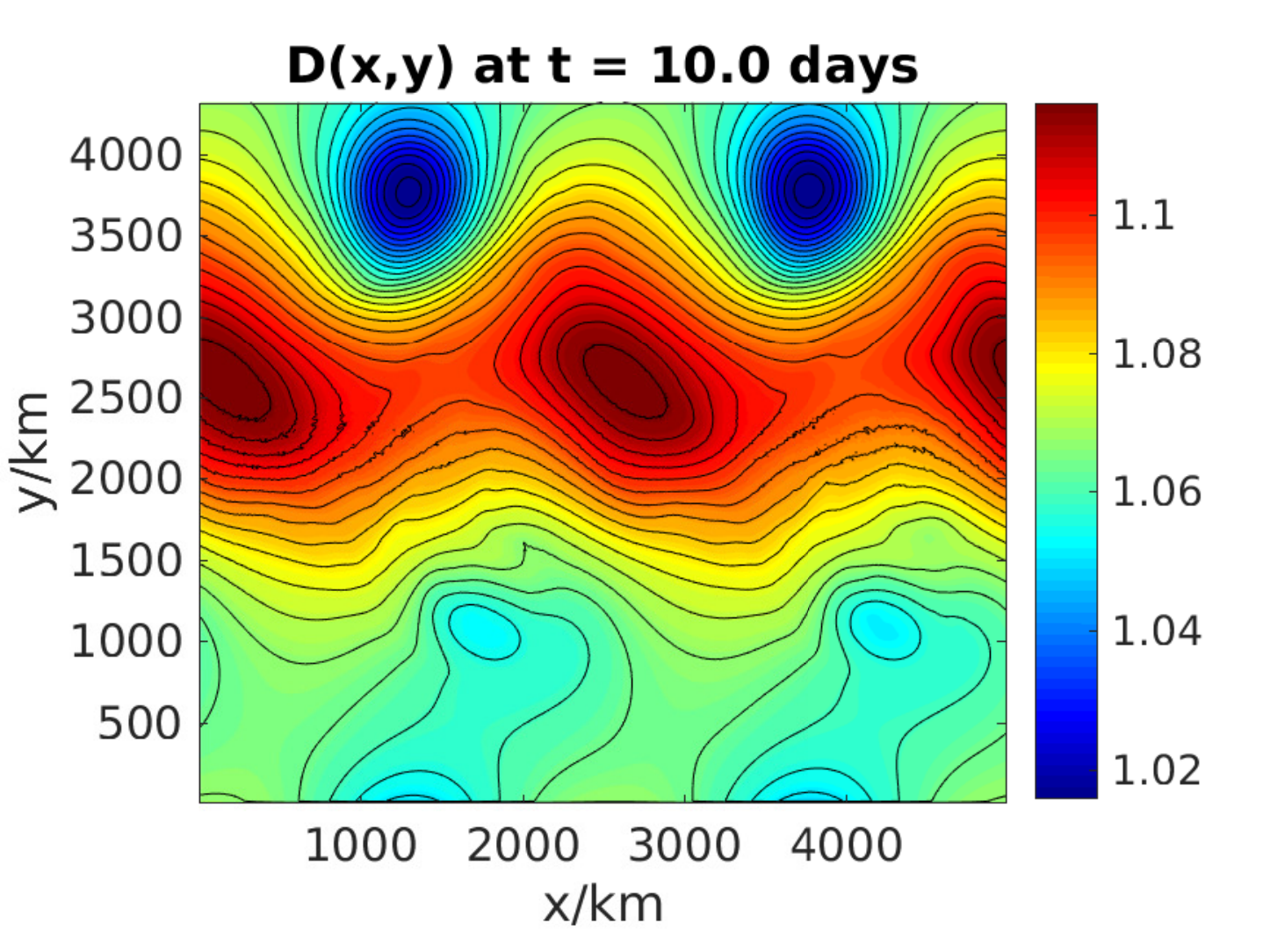}}  
  \end{tabular}
    \caption{Shear flow test case: snapshots of $D$ on regular (upper row) and irregular (lower row)
    mesh with $2\cdot 256^2$ cells. Contours between $-0.06\,{\rm km} +H_0$ and $0.04\,{\rm km} + H_0$ 
    with interval of $0.004\,{\rm km}$.}                                                                                             
  \label{fig_D_shear_dynamics}
  \end{figure}

  \begin{figure}[t!]\centering
  \begin{tabular}{cc} 
   \hspace*{-0cm}{\includegraphics[scale=0.45]{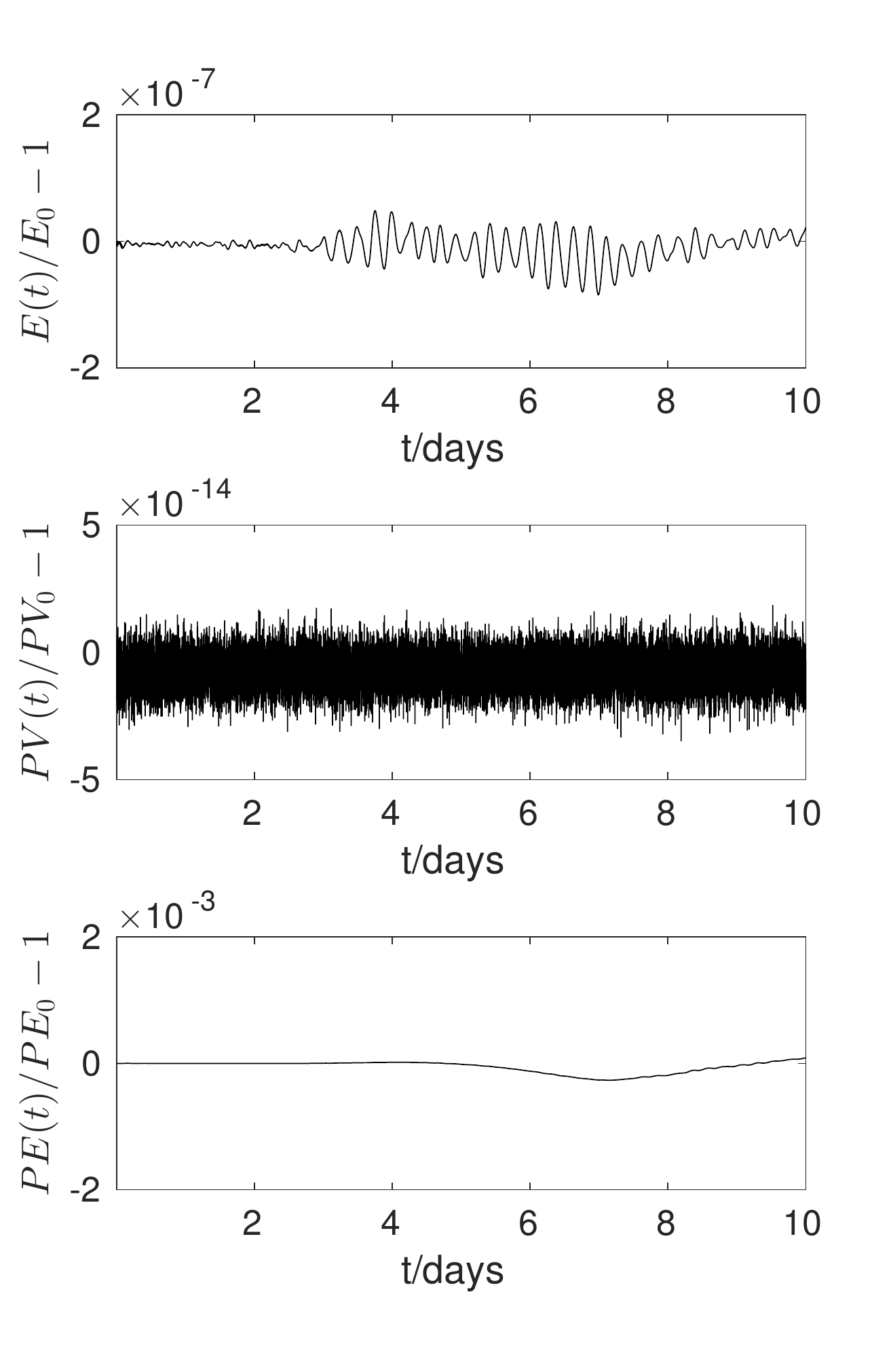}} &   
   \hspace*{-0cm}{\includegraphics[scale=0.45]{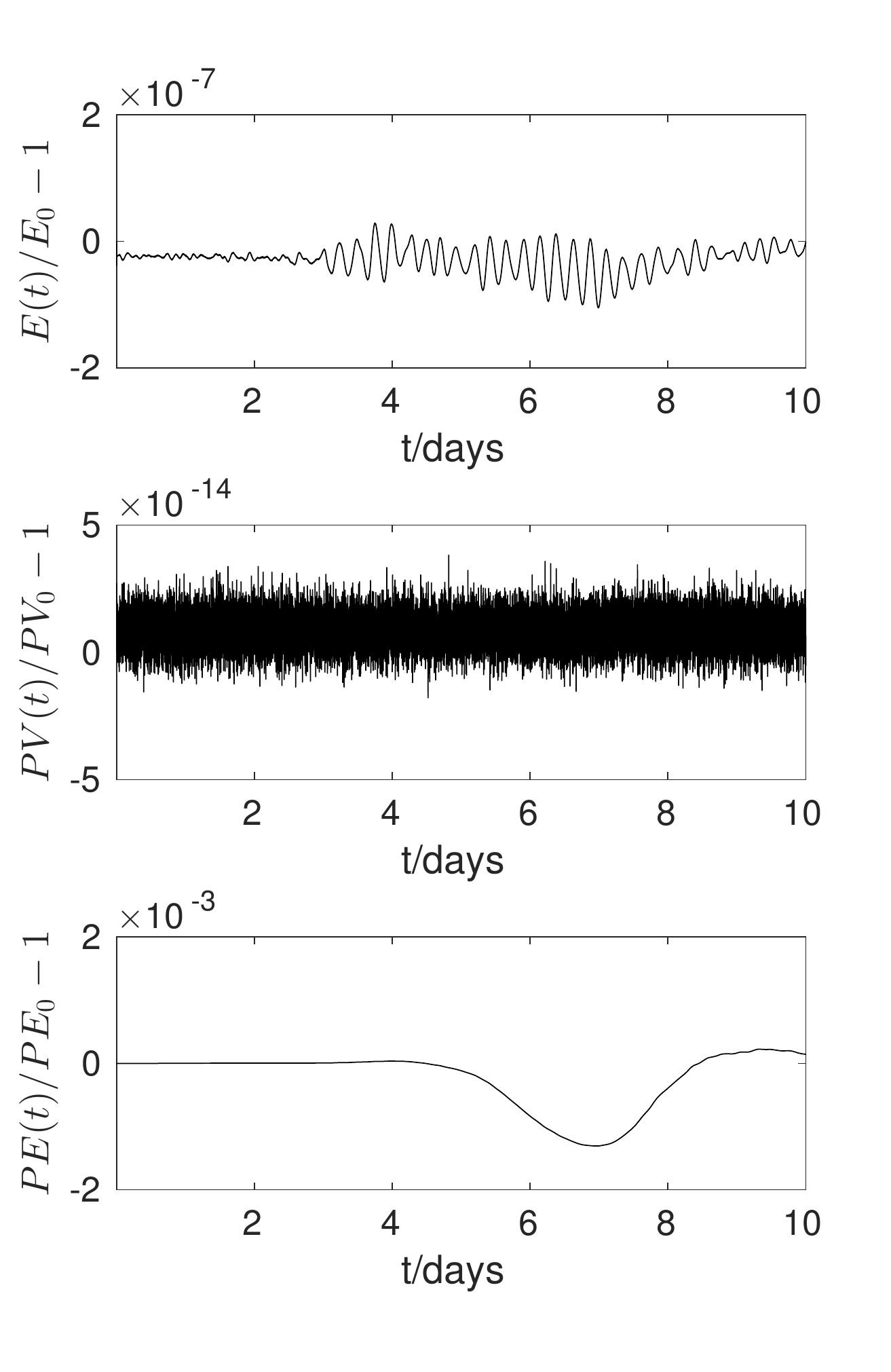}} 
  \end{tabular}
    \caption{Shear flow test case: relative errors of total energy $E(t)$ (upper row), 
    of mass-weighted potential vorticity $PV(t)$ (middle row), and potential enstrophy $PE(t)$ (lower row) for a 
    fluid in quasi-geostrophic regime for regular (left) and irregular (right) meshes with 
    $2 \cdot 256^2$ cells. }                                                                                             
  \label{fig_diag_shearflow}
  \end{figure}

 \section{Conclusions and outlook}

 This study provides a first step in the development of geometry preserving numerical integrators for 
 compressible fluids. We derived a variational discretization for compressible fluids 
 by extending the variational discretization framework developed in \cite{PaMuToKaMaDe2011} for incompressible 
 ideal fluids, based on a Lie group approximation of the group of volume preserving diffeomorphisms. 
 This extension was achieved by relaxing the volume preserving condition on the discrete diffeomorphisms and 
 imposing appropriate nonholonomic constraints that naturally follow from the relation between the 
 discrete and continuous velocities.
 Given a semidiscrete Lagrangian, the semidiscrete equations followed by applying the Lagrange-d'Alembert 
 principle in reduced Eulerian form, i.e., the Euler-Poincar\'e-d'Alembert equations. The resulting 
 semidiscrete equations are valid on any mesh discretization of the fluid domain, in 3D and 2D. We derived 
 them explicitly for 2D irregular simplicial meshes. In particular, a discrete Lie derivative operator 
 was obtained on such meshes. We then specialized our study by focusing on the case of the rotating shallow water equations.

 For this case, we numerically verified that our variational discretization
  (i) preserves stationary solutions of the shallow water equations, 
  (ii) conserves very accurately quantities of interest such as mass, total energy, 
  mass-weighted potential vorticity, and potential enstrophy, (iii) correctly represents nonlinear dynamics,
  and (iv) correctly represents (inertia-gravity) waves. 
  In more detail, simulating the time evolution of a lake at rest and of a steady isolated vortex, 
  we showed that the shallow water scheme conserves these stationary solutions to a high degree even 
  for long-term simulations. In addition, the numerical solutions of a stationary isolated vortex converge
  with, at least $1^{st}$-order, towards the exact solutions for both 
  regular or irregular computational meshes. For all test cases studied, mass-weighted potential vorticity and mass were conserved at machine precision
  while the total energy shows excellent long terms conservation properties 
  with a maximal error at the order of $10^{-7}$ that decreases further at 
  $1^{st}$-order rate for smaller time step sizes. 
  Moreover, the scheme correctly represents (inertia-gravity) waves as a comparison between
  numerically determined and theoretically predicted wave spectra confirmed. 
  The study of the nonlinear dynamics with respect to a dual vortex interaction and 
  a shear flow test case showed further that the scheme presents very accurately 
  the dynamics triggered by nonlinear interaction. The quality of the results is 
  similarly good for regular and irregular computational meshes. 
  The correctness of our simulations are underpinned by a comparison 
  to literature.

  Providing here a variational integrator for the 
  two dimensional shallow water equations, object of current and future 
  work is the extension of the variational discretization framework 
  to derive structure-preserving discretizations for fully three 
  dimensional compressible flows.

\section*{Acknowledgements}
The authors thank D. Cugnet, M. Desbrun, E. Gawlik, F. de Goes, D. Pavlov, P. Mullen, and V. Zeitlin for extremely useful 
discussions during the course of this work. WB and FGB were partially supported by the ANR project 
GEOMFLUID, ANR-14-CE23-0002-01; WB has received funding from the European Union's Horizon 2020
research and innovation programme under the Marie Sk\l odowska-Curie grant agreement No 657016.


\end{document}